\let\th@plain\relax
\pgfplotsset{compat=newest}
\DeclareFontFamily{U}{MnSymbolA}{}
\DeclareFontShape{U}{MnSymbolA}{m}{n}{
    <-6> MnSymbolA5
    <6-7> MnSymbolA6
    <7-8> MnSymbolA7
    <8-9> MnSymbolA8
    <9-10> MnSymbolA9
    <10-12> MnSymbolA10
    <12-> MnSymbolA12
}{}
\DeclareFontShape{U}{MnSymbolA}{b}{n}{
    <-6> MnSymbolA-Bold5
    <6-7> MnSymbolA-Bold6
    <7-8> MnSymbolA-Bold7
    <8-9> MnSymbolA-Bold8
    <9-10> MnSymbolA-Bold9
    <10-12> MnSymbolA-Bold10
    <12-> MnSymbolA-Bold12
}{}
\DeclareSymbolFont{MnSymA}{U}{MnSymbolA}{m}{n}
\DeclareMathSymbol{\lcirclearrowright}{\mathrel}{MnSymA}{252}
\DeclareMathSymbol{\lcirclearrowdown}{\mathrel}{MnSymA}{255}
\DeclareMathSymbol{\rcirclearrowleft}{\mathrel}{MnSymA}{250}
\DeclareMathSymbol{\rcirclearrowdown}{\mathrel}{MnSymA}{251}
\DeclareFontFamily{U}{MnSymbolC}{}
\DeclareSymbolFont{MnSyC}{U}{MnSymbolC}{m}{n}
\DeclareFontShape{U}{MnSymbolC}{m}{n}{
    <-6>  MnSymbolC5
    <6-7>  MnSymbolC6
    <7-8>  MnSymbolC7
    <8-9>  MnSymbolC8
    <9-10> MnSymbolC9
    <10-12> MnSymbolC10
    <12->   MnSymbolC12%
}{}
\DeclareMathSymbol{\powerset}{\mathord}{MnSyC}{180}
\DeclareMathSymbol{\righthalfcap}{\mathbin}{MnSyC}{186}
\DeclareMathAlphabet{\mathpzc}{OT1}{pzc}{m}{it}
\def\boolwahr{true}
\def\boolfalsch{false}
\def\boolleer{}
\let\boolinappendix\boolfalsch
\let\boolinmdframed\boolfalsch
\newlength\rtab
\newlength\gesamtlinkerRand
\newlength\gesamtrechterRand
\newlength\ownspaceabovethm
\newlength\ownspacebelowthm
\newlength\aboveequation
\newlength\belowequation
\def\secnumberingpt{.}
\def\secnumberingseppt{.}
\def\subsecnumberingseppt{}
\def\thmnumberingpt{.}
\def\thmnumberingseppt{}
\def\thmForceSepPt{.}
\definecolor{leer}{gray}{1}
\definecolor{boxgrau}{gray}{0.85}
\definecolor{dunkelgrau}{gray}{0.5}
\definecolor{maroon}{rgb}{0.6901961,0.1882353,0.3764706}
\definecolor{dunkelgruen}{rgb}{0.015625,0.363281,0.109375}
\definecolor{dunkelrot}{rgb}{0.5450980392,0,0}
\definecolor{dunkelblau}{rgb}{0,0,0.5450980392}
\definecolor{blau}{rgb}{0,0,1}
\definecolor{newresult}{rgb}{0.6,0.6,0.6}
\definecolor{improvedresult}{rgb}{0.9,0.9,0.9}
\definecolor{hervorheben}{rgb}{0,0.9,0.7}
\definecolor{starkesblau}{rgb}{0.1019607843,0.3176470588,0.8156862745}
\definecolor{achtung}{rgb}{1,0.5,0.5}
\definecolor{frage}{rgb}{0.5,1,0.5}
\definecolor{schreibweise}{rgb}{0,0.7,0.9}
\definecolor{axiom}{rgb}{0,0.3,0.3}
\definecolor{drawing_light_grey}{gray}{0.85}
\definecolor{background_light_grey}{gray}{0.95}
\def\let@name#1#2{
    \expandafter\let\csname #1\expandafter\endcsname\csname #2\endcsname\relax
}
\DeclareRobustCommand\crfamily{\fontfamily{ccr}\selectfont}
\DeclareTextFontCommand{\textcr}{\crfamily}
\def\ifthenelseleer#1#2#3{\ifthenelse{\equal{#1}{}}{#2}{#1#3}}
\def\bedingtesspaceexpand#1#2#3{\ifthenelseleer{\csname #1\endcsname}{#3}{#2#3}}
\def\nvraum{\@ifnextchar\bgroup{\nvraum@c}{\nvraum@bes}}
    \def\nvraum@c#1{\vspace*{-#1\baselineskip}}
    \def\nvraum@bes{\vspace*{-\baselineskip}}
\def\forceaddspace{\relax\ifmmode\else\@\xspace\fi}
\def\forceremovespace{\relax\ifmmode\else\expandafter\@gobble\fi}
\def\send@toaux#1{\@bsphack\protected@write\@auxout{}{\string#1}\@esphack}
\def\rlabel#1[#2]#3#4#5{#5\rlabel@aux{#1}[#2]{#3}{#4}{#5}}
    \def\rlabel@aux#1[#2]#3#4#5{%
        \send@toaux{\newlabel{#1}{{\@currentlabel}{\thepage}{{\unexpanded{#5}}}{#2.\csname the#2\endcsname}{}}}\relax%
    }
\def\tag@rawscheme#1#2[#3]#4#5{\@ifnextchar[{\tag@rawscheme@{#1}{#2}[#3]{#4}{#5}}{\tag@rawscheme@{#1}{#2}[#3]{#4}{#5}[*]}}
    \def\tag@rawscheme@#1#2[#3]#4#5[#6]{\@ifnextchar\bgroup{\tag@rawscheme@@{#1}{#2}[#3]{#4}{#5}[#6]}{\tag@rawscheme@@{#1}{#2}[#3]{#4}{#5}[#6]{}}}
    \def\tag@rawscheme@@#1#2[#3]#4#5[#6]#7{%
        \ifthenelse{\equal{#6}{*}}{%
            \ifthenelse{\equal{#7}{\boolleer}}{\refstepcounter{#3}#4\csname the#3\endcsname#5}{#4#7#5}%
        }{%
            \refstepcounter{#3}#4%
            \ifthenelse{\equal{#7}{\boolleer}}{\rlabel{#6}[#3]{#1}{#2}{\csname the#3\endcsname}}{\rlabel{#6}[#3]{#1}{#2}{#7}}%
            #5%
        }%
    }
\def\tag@scheme#1#2[#3]{\tag@rawscheme{#1}{#2}[#3]{\upshape(}{\upshape)}}
\def\eqtag@post#1{\makebox[0pt][r]{#1}}
\def\eqtag@pre{\tag@scheme{Eq}{Equation}[equation]}
\def\eqtag{\@ifnextchar[{\eqtag@}{\eqtag@[*]}}
    \def\eqtag@[#1]{\@ifnextchar\bgroup{\eqtag@@[#1]}{\eqtag@@[#1]{}}}
    \def\eqtag@@[#1]#2{\eqtag@post{\eqtag@pre[#1]{#2}}}
\def\eqcref#1{\text{(\ref{#1})}}
\def\punktlabel#1{\label{it:#1:\beweislabel}}
\def\punktcref#1{\eqcref{it:#1:\beweislabel}}
\def\opfromto[#1]_#2^#3{\underset{#2}{\overset{#3}{#1}}}
\def\textoverset#1#2{\overset{\text{#1}}{#2}}
\def\eqcrefoverset#1#2{\textoverset{\eqcref{#1}}{#2}}
\def\mathclap#1{#1}
\def\oberunterset#1{\@ifnextchar^{\oberunterset@oben{#1}}{\oberunterset@unten{#1}}}
    \def\oberunterset@oben#1^#2_#3{\underset{\mathclap{#3}}{\overset{\mathclap{#2}}{#1}}}
    \def\oberunterset@unten#1_#2^#3{\underset{\mathclap{#2}}{\overset{\mathclap{#3}}{#1}}}
    \def\breitunderbrace#1_#2{\underbrace{#1}_{\mathclap{#2}}}
    \def\breitoverbrace#1^#2{\overbrace{#1}^{\mathclap{#2}}}
    \def\breitunderbracket#1_#2{\underbracket{#1}_{\mathclap{#2}}}
    \def\breitoverbracket#1^#2{\overbracket{#1}^{\mathclap{#2}}}
\def\generatenestedsecnumbering#1#2#3{%
    \expandafter\gdef\csname thelong#3\endcsname{%
        \expandafter\csname the#2\endcsname%
        \secnumberingpt%
        \expandafter\csname #1\endcsname{#3}%
    }%
    \expandafter\gdef\csname theshort#3\endcsname{%
        \expandafter\csname #1\endcsname{#3}%
    }%
}
\def\generatenestedthmnumbering#1#2#3{%
    \expandafter\gdef\csname the#3\endcsname{%
        \expandafter\csname the#2\endcsname%
        \thmnumberingpt%
        \expandafter\csname #1\endcsname{#3}%
    }%
    \expandafter\gdef\csname theshort#3\endcsname{%
        \expandafter\csname #1\endcsname{#3}%
    }%
}
\providecommand{\setcounternach}{}
\renewcommand{\setcounternach}[2]{\setcounter{#1}{#2}\addtocounter{#1}{-1}}
\def\forcepunkt#1{#1\IfEndWith{#1}{.}{}{.}}
\def\matrix#1{\left(\begin{array}{#1}}
    \def\endmatrix{\end{array}\right)}
\def\smatrix{\left(\begin{smallmatrix}}
    \def\endsmatrix{\end{smallmatrix}\right)}
\def\multiargrekursiverbefehl#1#2#3#4#5#6#7#8{%
    \expandafter\gdef\csname#1\endcsname #2##1#4{\csname #1@anfang\endcsname##1#3\egroup}
    \expandafter\def\csname #1@anfang\endcsname##1#3{#5##1\@ifnextchar\egroup{\csname #1@ende\endcsname}{#7\csname #1@mitte\endcsname}}
    \expandafter\def\csname #1@mitte\endcsname##1#3{#6##1\@ifnextchar\egroup{\csname #1@ende\endcsname}{#7\csname #1@mitte\endcsname}}
    \expandafter\def\csname #1@ende\endcsname##1{#8}
}
\def\BeweisRichtung[#1]{\@ifnextchar\bgroup{\@BeweisRichtung@c[#1]}{\@BeweisRichtung@bes[#1]}}
    \def\@BeweisRichtung@bes[#1]{{\bfseries (#1)}}
    \def\@BeweisRichtung@c[#1]#2#3{#2~#1~#3}
\def\erzeugeBeweisRichtungBefehle#1#2{
    \expandafter\gdef\csname #1text\endcsname##1##2{\BeweisRichtung[#2]{##1}{##2}}
    \expandafter\gdef\csname #1\endcsname{%
        \@ifnextchar\bgroup{\csname #1@\endcsname}{\csname #1text\endcsname{}{}}%
    }
    \expandafter\gdef\csname #1@\endcsname##1##2{%
        \csname #1text\endcsname{\punktcref{##1}}{\punktcref{##2}}%
    }
}
\def\brkt#1{\langle{}#1{}\rangle}
\def\mathfrak#1{\mbox{\usefont{U}{euf}{m}{n}#1}}
\def\rectangleblack{\text{\RectangleBold}}
\def\squareblack{\blacksquare}
\def\create@abbreviation#1#2{
    \expandafter\gdef\csname #1\endcsname{%
        #2\@ifnextchar.{%
            \relax\ifmmode\else\expandafter\@gobble\fi%
        }{%
            \relax\ifmmode\else\@\xspace\fi%
        }%
    }
}
\def\crefname@full#1#2#3#4#5{%
    \crefname{#1}{#2}{#3}
    \Crefname{#1}{#4}{#5}
}
\def\crefname@fullmod#1#2#3#4#5{%
    \crefname@full{#1}{#2}{#3}{#4}{#5}
    \crefname@full{#1@basic}{#2}{#3}{#4}{#5}
    \crefname@full{#1@withName}{#2}{#3}{#4}{#5}
}
\def\qedEIGEN#1{\@ifnextchar[{\qedEIGEN@c{#1}}{\qedEIGEN@bes{#1}}}
\def\qedEIGEN@bes#1{%
    \parfillskip=0pt
    \widowpenalty=10000
    \displaywidowpenalty=10000
    \finalhyphendemerits=0
    \leavevmode
    \unskip
    \nobreak
    \hfil
    \penalty50
    \hskip.2em
    \null
    \hfill
    #1
    \par%
}
\def\qedEIGEN@c#1[#2]{%
    \parfillskip=0pt
    \widowpenalty=10000
    \displaywidowpenalty=10000
    \finalhyphendemerits=0
    \leavevmode
    \unskip
    \nobreak
    \hfil
    \penalty50
    \hskip.2em
    \null
    \hfill
    {#1~{\small\bfseries\upshape (#2)}}%
    \par%
}
\def\qedVARIANT#1#2{
    \expandafter\def\csname ennde#1Sign\endcsname{#2}
    \expandafter\def\csname ennde#1\endcsname{\@ifnextchar[{\qedEIGEN@c{#2}}{\qedEIGEN@bes{#2}}} 
}
\def\ra@pretheoremwork{
    \setlength{\theorempreskipamount}{\ownspaceabovethm}
    \setlength{\theorempostskipamount}{\ownspacebelowthm}
}
\def\rathmtransfer#1#2{
    \expandafter\def\csname #2\endcsname{\csname #1\endcsname}
    \expandafter\def\csname end#2\endcsname{\csname end#1\endcsname}
}
\def\ranewthm#1#2#3[#4]{
    \theoremstyle{\current@theoremstyle}
    \theoremseparator{\current@theoremseparator}
    \theoremprework{\ra@pretheoremwork}
    \@ifundefined{#1@basic}{\newtheorem{#1@basic}[#4]{#2}}{\renewtheorem{#1@basic}[#4]{#2}}
    \theoremstyle{\current@theoremstyle}
    \theoremseparator{\thmForceSepPt}
    \theoremprework{\ra@pretheoremwork}
    \@ifundefined{#1@withName}{\newtheorem{#1@withName}[#4]{#2}}{\renewtheorem{#1@withName}[#4]{#2}}
    \theoremstyle{nonumberplain}
    \theoremseparator{\thmForceSepPt}
    \theoremprework{\ra@pretheoremwork}
    \@ifundefined{#1@star@basic}{\newtheorem{#1@star@basic}[#4]{#2}}{\renewtheorem{#1@star@basic}[#4]{#2}}
    \theoremstyle{nonumberplain}
    \theoremseparator{\thmForceSepPt}
    \theoremprework{\ra@pretheoremwork}
    \@ifundefined{#1@star@withName}{\newtheorem{#1@star@withName}[#4]{#2}}{\renewtheorem{#1@star@withName}[#4]{#2}}
    \umbauenenv{#1}{#3}[#4]
    \umbauenenv{#1@star}{#3}[#4]
    \rathmtransfer{#1@star}{#1*}
}
\def\umbauenenv#1#2[#3]{%
    \expandafter\def\csname #1\endcsname{\relax%
        \@ifnextchar[{\csname #1@\endcsname}{\csname #1@\endcsname[*]}%
    }
    \expandafter\def\csname #1@\endcsname[##1]{\relax%
        \@ifnextchar[{\csname #1@@\endcsname[##1]}{\csname #1@@\endcsname[##1][*]}%
    }
    \expandafter\def\csname #1@@\endcsname[##1][##2]{%
        \ifx*##1%
            \def\enndeOfBlock{\csname end#1@basic\endcsname}
            \csname #1@basic\endcsname%
        \else%
            \def\enndeOfBlock{\csname end#1@withName\endcsname}
            \csname #1@withName\endcsname[##1]%
        \fi%
        \def\makelabel####1{%
            \gdef\beweislabel{####1}%
            \label{\beweislabel}%
        }%
        \ifx*##2%
            \def\enndeSymbol{\qedEIGEN{#2}}
        \else%
            \def\enndeSymbol{\qedEIGEN{#2}[##2]}
        \fi
    }
    \expandafter\gdef\csname end#1\endcsname{\enndeSymbol\enndeOfBlock}
}
    \def\current@theoremstyle{plain}
    \def\current@theoremseparator{\thmnumberingseppt}
    \theoremstyle{\current@theoremstyle}
\def\shortclaim@claim{%
    \iflanguage{british}{Claim}{%
    \iflanguage{english}{Claim}{%
    \iflanguage{ngerman}{Behauptung}{%
    \iflanguage{russian}{Утверждение}{%
    Claim%
    }}}}%
}
\def\shortclaim@pf@kurz{%
    \iflanguage{british}{Pf}{%
    \iflanguage{english}{Pf}{%
    \iflanguage{ngerman}{Bew}{%
    \iflanguage{russian}{Доказательство}{%
    Pf%
    }}}}%
}
\def\shortclaim{\@ifnextchar\bgroup{\shortclaim@c}{\shortclaim@bes}}
    \def\shortclaim@c#1{\item[{\bfseries \shortclaim@claim\forceaddspace #1.}]}
    \def\shortclaim@bes{\item[{\bfseries \shortclaim@claim.}]}
\def\proofofshortclaim{\item[{\bfseries\itshape\shortclaim@pf@kurz.}]}
\newcolumntype{\RECHTS}[1]{>{\raggedleft}p{#1}}
\newcolumntype{\LINKS}[1]{>{\raggedright}p{#1}}
\newcolumntype{m}{>{$}l<{$}}
\newcolumntype{C}{>{$}c<{$}}
\newcolumntype{L}{>{$}l<{$}}
\newcolumntype{R}{>{$}r<{$}}
\newcolumntype{0}{@{\hspace{0pt}}}
\newcolumntype{\LINKSRAND}{@{\hspace{\@totalleftmargin}}}
\newcolumntype{h}{@{\extracolsep{\fill}}}
\newcolumntype{i}{>{\itshape}}
\newcolumntype{t}{@{\hspace{\tabcolsep}}}
\newcolumntype{q}{@{\hspace{1em}}}
\newcolumntype{n}{@{\hspace{-\tabcolsep}}}
\newcolumntype{M}[2]{%
    >{\begin{minipage}{#2}\begin{math}}%
    {#1}%
    <{\end{math}\end{minipage}}%
}
\newcolumntype{T}[2]{%
    >{\begin{minipage}{#2}}%
    {#1}%
    <{\end{minipage}}%
}
\def\punkteumgebung@genbefehl#1#2#3{
    \punkteumgebung@genbefehl@{#1}{#2}{#3}{}{}
    \punkteumgebung@genbefehl@{multi#1}{#2}{#3}{
        \setlength{\columnsep}{10pt}%
        \setlength{\columnseprule}{0pt}%
        \begin{multicols}{\thecolumnanzahl}%
    }{\end{multicols}\nvraum{1}}
}
\def\punkteumgebung@genbefehl@#1#2#3#4#5{
    \expandafter\gdef\csname #1\endcsname{
        \@ifnextchar\bgroup{\csname #1@c\endcsname}{\csname #1@bes\endcsname}
    }
        \expandafter\def\csname #1@c\endcsname##1{
            \@ifnextchar[{\csname #1@c@\endcsname{##1}}{\csname #1@c@\endcsname{##1}[\z@]}
        }
        \expandafter\def\csname #1@c@\endcsname##1[##2]{
            \@ifnextchar[{\csname #1@c@@\endcsname{##1}[##2]}{\csname #1@c@@\endcsname{##1}[##2][\z@]}
        }
        \expandafter\def\csname #1@c@@\endcsname##1[##2][##3]{
            \let\alterlinkerRand\gesamtlinkerRand
            \let\alterrechterRand\gesamtrechterRand
            \addtolength{\gesamtlinkerRand}{##2}
            \addtolength{\gesamtrechterRand}{##3}
            \advance\linewidth -##2%
            \advance\linewidth -##3%
            \advance\@totalleftmargin ##2%
            \parshape\@ne \@totalleftmargin\linewidth%
            #4
            \begin{#2}[\upshape ##1]%
                \setlength{\parskip}{0.5\baselineskip}\relax%
                \setlength{\topsep}{\z@}\relax%
                \setlength{\partopsep}{\z@}\relax%
                \setlength{\parsep}{\parskip}\relax%
                \setlength{\itemsep}{#3}\relax%
                \setlength{\listparindent}{\z@}\relax%
                \setlength{\itemindent}{\z@}\relax%
        }
        \expandafter\def\csname #1@bes\endcsname{
            \@ifnextchar[{\csname #1@bes@\endcsname}{\csname #1@bes@\endcsname[\z@]}
        }
        \expandafter\def\csname #1@bes@\endcsname[##1]{
            \@ifnextchar[{\csname #1@bes@@\endcsname[##1]}{\csname #1@bes@@\endcsname[##1][\z@]}
        }
        \expandafter\def\csname #1@bes@@\endcsname[##1][##2]{
            \let\alterlinkerRand\gesamtlinkerRand
            \let\alterrechterRand\gesamtrechterRand
            \addtolength{\gesamtlinkerRand}{##1}
            \addtolength{\gesamtrechterRand}{##2}
            \advance\linewidth -##1%
            \advance\linewidth -##2%
            \advance\@totalleftmargin ##1%
            \parshape\@ne \@totalleftmargin\linewidth%
            #4
            \begin{#2}%
                \setlength{\parskip}{0.5\baselineskip}\relax%
                \setlength{\topsep}{\z@}\relax%
                \setlength{\partopsep}{\z@}\relax%
                \setlength{\parsep}{\parskip}\relax%
                \setlength{\itemsep}{#3}\relax%
                \setlength{\listparindent}{\z@}\relax%
                \setlength{\itemindent}{\z@}\relax%
        }
    \expandafter\gdef\csname end#1\endcsname{%
        \end{#2}#5
        \setlength{\gesamtlinkerRand}{\alterlinkerRand}
        \setlength{\gesamtlinkerRand}{\alterrechterRand}
    }
}
\def\ritempunkt{{\Large \textbullet}} 
\setdefaultitem{\ritempunkt}{\ritempunkt}{\ritempunkt}{\ritempunkt}
\def\shorteqnarray{%
    \bgroup
    \setlength{\abovedisplayshortskip}{\aboveequation}%
    \setlength{\belowdisplayshortskip}{\belowequation}%
    \setlength{\abovedisplayskip}{\aboveequation}%
    \setlength{\belowdisplayskip}{\belowequation}%
    \begin{eqnarray*}%
}
\def\endshorteqnarray{%
    \end{eqnarray*}%
    \egroup
}
\def\longeqnarray{%
    \bgroup%
    \allowdisplaybreaks%
    \setlength{\abovedisplayshortskip}{\aboveequation}%
    \setlength{\belowdisplayshortskip}{\belowequation}%
    \setlength{\abovedisplayskip}{\aboveequation - \baselineskip}%
    \setlength{\belowdisplayskip}{\belowequation}%
    \begin{eqnarray*}
}
\def\endlongeqnarray{%
    \end{eqnarray*}%
    \egroup%
}
\def\displayarray[#1]#2{
    \bgroup
    \everymath={\displaystyle}
    \begin{array}[#1]{#2}
}
\def\enddisplayarray{
    \end{array}
    \egroup
}
\def\matrix#1{\left(\begin{array}[mc]{#1}}
    \def\endmatrix{\end{array}\right)}
\def\smatrix{\left(\begin{smallmatrix}}
    \def\endsmatrix{\end{smallmatrix}\right)}
\def\multiargrekursiverbefehl#1#2#3#4#5#6#7#8{%
    \expandafter\gdef\csname#1\endcsname #2##1#4{\csname #1@anfang\endcsname##1#3\egroup}
    \expandafter\def\csname #1@anfang\endcsname##1#3{#5##1\@ifnextchar\egroup{\csname #1@ende\endcsname}{#7\csname #1@mitte\endcsname}}
    \expandafter\def\csname #1@mitte\endcsname##1#3{#6##1\@ifnextchar\egroup{\csname #1@ende\endcsname}{#7\csname #1@mitte\endcsname}}
    \expandafter\def\csname #1@ende\endcsname##1{#8}
}
\def\underbracenodisplay#1{%
    \mathop{\vtop{\m@th\ialign{##\crcr
    $\hfil\displaystyle{#1}\hfil$\crcr
    \noalign{\kern3\p@\nointerlineskip}%
    \upbracefill\crcr\noalign{\kern3\p@}}}}\limits%
}
\def\changemargins{\@ifnextchar[{\indents@}{\indents@[\z@]}}
\def\indents@[#1]{\@ifnextchar[{\indents@@[#1]}{\indents@@[#1][\z@]}}
\def\indents@@[#1][#2]{%
    \begin{list}{}{\relax
        \setlength{\topsep}{\z@}\relax
        \setlength{\partopsep}{\z@}\relax
        \setlength{\parsep}{\parskip}\relax
        \setlength{\listparindent}{\z@}\relax
        \setlength{\itemindent}{\z@}\relax
        \setlength{\leftmargin}{#1}\relax
        \setlength{\rightmargin}{#2}\relax
        \let\alterlinkerRand\gesamtlinkerRand
        \let\alterrechterRand\gesamtrechterRand
        \addtolength{\gesamtlinkerRand}{#1}
        \addtolength{\gesamtrechterRand}{#2}
    }\relax
        \item[]\relax
}
    \def\endchangemargins{%
        \setlength{\gesamtlinkerRand}{\alterlinkerRand}
        \setlength{\gesamtlinkerRand}{\alterrechterRand}
        \end{list}%
    }
\def\indentonce{\begin{changemargins}[\rtab][\rtab]}
    \def\endindentonce{\end{changemargins}}
\def\restoremargins{\begin{changemargins}[-\gesamtlinkerRand][-\gesamtrechterRand]}
    \def\endrestoremargins{\end{changemargins}}
\def\programmiercode{
    \modulolinenumbers[1]
    \begin{changemargins}[\rtab][\rtab]%
    \begin{linenumbers}%
        \fontfamily{cmtt}\fontseries{m}\fontshape{u}\selectfont%
        \setlength{\parskip}{1\baselineskip}%
        \setlength{\parindent}{0pt}%
}
    \def\endprogrammiercode{
        \end{linenumbers}
        \end{changemargins}
    }
\def\schattiertebox@genbefehl#1#2#3{
    \expandafter\gdef\csname #1\endcsname{%
        \@ifnextchar[{\csname #1@args\endcsname}{\csname #1@args\endcsname[#3]}
    }
        \expandafter\def\csname #1@args\endcsname[##1]{%
            \@ifnextchar[{\csname #1@args@l\endcsname[##1]}{\csname #1@args@n\endcsname[##1]}
        }
        \expandafter\def\csname #1@args@l\endcsname[##1][##2]{%
            \@ifnextchar[{\csname #1@args@l@r\endcsname[##1][##2]}{\csname #1@args@l@n\endcsname[##1][##2]}
        }
        \expandafter\def\csname #1@args@n\endcsname[##1]{%
            \let\boolinmdframed\boolwahr
            \begin{mdframed}[#2leftmargin=0,rightmargin=0,outermargin=0,innermargin=0,##1]
        }
        \expandafter\def\csname #1@args@l@n\endcsname[##1][##2]{%
            \let\boolinmdframed\boolwahr
            \begin{mdframed}[#2leftmargin=##2/2,rightmargin=##2/2,outermargin=##2/2,innermargin=##2/2,##1]
        }
        \expandafter\def\csname #1@args@l@r\endcsname[##1][##2][##3]{%
            \let\boolinmdframed\boolwahr
            \begin{mdframed}[#2leftmargin=##2,rightmargin=##3,outermargin=##2,innermargin=##3,##1]
        }
    \expandafter\gdef\csname end#1\endcsname{%
        \end{mdframed}
        \let\boolinmdframed\boolfalsch
    }
}
\def\tikzsetzepfeil#1{%
    \begin{tikzpicture}[remember picture,overlay,>=latex]%
        \draw #1;%
    \end{tikzpicture}%
}
\def\tikzsetzekreise[#1]#2#3{%
    \tikzsetzepfeil{%
    [rounded corners,#1]%
        ([shift={(-\tabcolsep,0.75\baselineskip)}]#2)%
        rectangle%
        ([shift={(\tabcolsep,-0.5\baselineskip)}]#3)
    }%
}
\tikzset{
    >=stealth,
    auto,
    node distance=1cm,
    thick,
    main node/.style={
        circle,draw,font=\sffamily\Large\bfseries,minimum size=0pt
    },
    state/.style={minimum size=0pt}
    loop above right/.style={loop,out=30,in=60,distance=0.5cm},
    loop above left/.style={above left,out=150,in=120,loop},
    loop below right/.style={below right,out=330,in=300,loop},
    loop below left/.style={below left,out=240,in=210,loop},
    itria/.style={
        draw,dashed,shape border uses incircle,
        isosceles triangle,shape border rotate=90,yshift=-1.45cm
    },
    rtria/.style={
        draw,dashed,shape border uses incircle,
        isosceles triangle,isosceles triangle apex angle=90,
        shape border rotate=-45,yshift=0.2cm,xshift=0.5cm
    },
    ritria/.style={
        draw,dashed,shape border uses incircle,
        isosceles triangle,isosceles triangle apex angle=110,
        shape border rotate=-55,yshift=0.1cm
    },
    litria/.style={
        draw,dashed,shape border uses incircle,
        isosceles triangle,isosceles triangle apex angle=110,
        shape border rotate=235,yshift=0.1cm
    }
}
\renewenvironment{cases}[0]{\left\{\begin{array}[c]{0lcl}}{\end{array}\right.}
\providecommand{\usesinglequotes}{}
\renewcommand{\usesinglequotes}[1]{`#1'}
\providecommand{\zeroone}{}
\renewcommand{\zeroone}[0]{\textup{0\=/1}\forceaddspace}
\providecommand{\onetoone}{}
\renewcommand{\onetoone}[0]{\ensuremath{1\!\!:\!\!1}\forceaddspace}
\providecommand{\bhatskeide}{}
\renewcommand{\bhatskeide}[0]{\text{B--Sk}}
\providecommand{\envPreMathsLong}{}
\renewcommand{\envPreMathsLong}[0]{%
    \bgroup\relax%
    \let\old@arraystretch\arraystretch\relax%
    \renewcommand\arraystretch{1.2}\relax\relax%
}
\providecommand{\envPostMathsLong}{}
\renewcommand{\envPostMathsLong}[0]{%
    \renewcommand\arraystretch{\old@arraystretch}\relax%
    \egroup\relax%
}
\providecommand{\complex}{}
\renewcommand{\complex}[0]{\mathbb{C}}
\providecommand{\Torus}{}
\renewcommand{\Torus}[0]{\mathbb{T}}
\providecommand{\reals}{}
\renewcommand{\reals}[0]{\mathbb{R}}
\providecommand{\realsNonNeg}{}
\renewcommand{\realsNonNeg}[0]{\reals_{\geq 0}}
\providecommand{\rationals}{}
\renewcommand{\rationals}[0]{\mathbb{Q}}
\providecommand{\integers}{}
\renewcommand{\integers}[0]{\mathbb{Z}}
\providecommand{\naturals}{}
\renewcommand{\naturals}[0]{\mathbb{N}}
\providecommand{\naturalsZero}{}
\renewcommand{\naturalsZero}[0]{\mathbb{N}_{0}}
\providecommand{\naturalsPos}{}
\renewcommand{\naturalsPos}[0]{\mathbb{N}}
\providecommand{\HilbertRaum}{}
\renewcommand{\HilbertRaum}[0]{\mathcal{H}}
\providecommand{\BanachRaum}{}
\renewcommand{\BanachRaum}[0]{\mathcal{E}}
\providecommand{\topSOT}{}
\renewcommand{\topSOT}[0]{\text{\upshape\scshape sot}}
\providecommand{\topWOT}{}
\renewcommand{\topWOT}[0]{\text{\upshape\scshape wot}}
\providecommand{\topPW}{}
\renewcommand{\topPW}[0]{\text{\upshape\scshape pw}}
\providecommand{\card}{}
\renewcommand{\card}[1]{\lvert #1 \rvert}
\providecommand{\einser}{}
\renewcommand{\einser}[0]{1\!\!1}
\providecommand{\floor}{}
\renewcommand{\floor}[1]{{\lfloor #1 \rfloor}}
\providecommand{\fractional}{}
\renewcommand{\fractional}[1]{\{\!\!\{#1\}\!\!\}}
\providecommand{\iunit}{}
\renewcommand{\iunit}[0]{\imath}
\providecommand{\abs}{}
\renewcommand{\abs}[1]{\lvert #1 \rvert}
\providecommand{\linspann}{}
\renewcommand{\linspann}[0]{\textup{lin}}
\providecommand{\adjoint}{}
\renewcommand{\adjoint}[0]{\text{\upshape ad}}
\providecommand{\onematrix}{}
\renewcommand{\onematrix}[0]{\text{\upshape\bfseries I}}
\providecommand{\zeromatrix}{}
\renewcommand{\zeromatrix}[0]{\mathbf{0}}
\providecommand{\onevector}{}
\renewcommand{\onevector}[0]{\mathbf{1}}
\providecommand{\zerovector}{}
\renewcommand{\zerovector}[0]{\mathbf{0}}
\providecommand{\brkt}{}
\renewcommand{\brkt}[2]{\langle{}#1,\:#2{}\rangle}
\providecommand{\brktLong}{}
\renewcommand{\brktLong}[2]{\Big\langle{}#1,\:#2{}\Big\rangle}
\providecommand{\norm}{}
\renewcommand{\norm}[1]{\lVert #1 \rVert}
\providecommand{\normLong}{}
\renewcommand{\normLong}[1]{\Big\| #1 \Big\|}
\providecommand{\BoundedOpsSymbol}{}
\renewcommand{\BoundedOpsSymbol}[0]{\mathfrak{L}}
\providecommand{\Cnought}{}
\renewcommand{\Cnought}[0]{\mathcal{C}_{0}}
\providecommand{\Repr}{}
\renewcommand{\Repr}[2]{\mathrm{Repr}(#1 \!:\! #2)}
\providecommand{\OpSpaceC}{}
\renewcommand{\OpSpaceC}[1]{\mathcal{C}(#1)}
\providecommand{\OpSpaceU}{}
\renewcommand{\OpSpaceU}[1]{\mathcal{U}(#1)}
\providecommand{\SpHomCs}{}
\renewcommand{\SpHomCs}[0]{\mathcal{S}^{c}_{s}}
\providecommand{\SpHomUs}{}
\renewcommand{\SpHomUs}[0]{\mathcal{S}^{u}_{s}}
\providecommand{\similarToUnitary}{}
\renewcommand{\similarToUnitary}[0]{\mathrel{\sim_{\textup{u}}}}
\providecommand{\orbitUnitary}{}
\renewcommand{\orbitUnitary}[1]{[#1]_{u}}
\providecommand{\universalElementsUnitary}{}
\renewcommand{\universalElementsUnitary}[1]{\Omega_{u}(#1)}
\def\Cts{\@ifnextchar_{\Cts@tief}{\Cts@tief_{}}}
    \def\Cts@tief_#1#2{\@ifnextchar\bgroup{\Cts@two_{#1}{#2}}{\Cts@one_{#1}{#2}}}
    \def\Cts@one_#1#2{C_{#1}\big(#2\big)}
    \def\Cts@two_#1#2#3{C_{#1}\big(#2,~#3\big)}
\def\BoundedOps#1{\@ifnextchar\bgroup{\BoundedOps@two{#1}}{\mathop{\BoundedOpsSymbol}(#1)}}
    \def\BoundedOps@two#1#2{\mathop{\BoundedOpsSymbol}(#1,#2)}
\def\BoundedOpsInv#1{\@ifnextchar\bgroup{\BoundedOps@two{#1}}{\mathop{\BoundedOpsSymbol}(#1)^{\times}}}
    \def\BoundedOpsInv@two#1#2{\mathop{\BoundedOpsSymbol}(#1,#2)^{\times}}
\def\id{\mathrm{\textit{id}}}
\def\restr#1{\vert_{#1}}
\def\without{\mathbin{\setminus}}
\def\eps{\varepsilon}
\let\altphi\phi
\let\altvarphi\varphi
    \def\phi{\altvarphi}
    \def\varphi{\altphi}
\def\quer#1{\overline{#1}}
\def\lim{\mathop{\ell\mathrm{im}}}
\def\supp{\mathop{\textup{supp}}}
\def\dim{\mathop{\textup{dim}}}
\def\ran{\mathop{\textup{ran}}}
\def\tinytopWOT{\text{\scriptsize\upshape \scshape wot}}
\def\toplocWOT{\text{{\itshape k}}_{\text{\tiny\upshape \scshape wot}}}
\def\tinytoplocWOT{\text{\scriptsize{{\itshape k}}-{\upshape \scshape wot}}}
\def\tinytopSOT{\text{\scriptsize\upshape \scshape sot}}
\def\toplocSOT{\text{{\itshape k}}_{\text{\tiny\upshape \scshape sot}}}
\def\tinytoplocSOT{\text{\scriptsize{{\itshape k}}-{\upshape \scshape sot}}}
\def\tinytopPW{\text{\scriptsize\upshape \scshape pw}}
\renewcommand{\arraystretch}{1}
\def\firstparagraph{\noindent}
\def\continueparagraph{\noindent}
    \def\theunitnamesection{\thesection}
    \def\sectionname{}
    \let\appendix@orig\appendix
    \def\appendix{%
        \appendix@orig%
        \let\boolinappendix\boolwahr
        \addcontentsline{toc}{part}{\appendixname}%
        \addtocontents{toc}{\protect\setcounter{tocdepth}{0}}
        \def\sectionname{Appendix}%
        \def\theunitnamesection{\Alph{section}}%
    }
    \def\notappendix{%
        \let\boolinappendix\boolfalse
        \addtocontents{toc}{\protect\setcounter{tocdepth}{1 }}
        \def\sectionname{}%
        \def\theunitnamesection{\arabic{section}}%
    }
    \def\@seccntformat#1{%
        \protect\textup{%
            \protect\@secnumfont
            \expandafter\protect\csname format#1\endcsname%
            \csname the#1\endcsname
            \expandafter\protect\csname format#1@pt\endcsname%
            \space
        }%
    }
    \def\formatsection@text{\centering\Large\scshape}
    \def\formatsection@pt{\secnumberingseppt}
    \def\section{\@startsection{section}{1}{\z@}{.7\linespacing\@plus\linespacing}{.5\linespacing}{\formatsection@text}}
    \def\formatsubsection@text{\flushleft\bfseries\scshape}
    \def\formatsubsection@pt{\subsecnumberingseppt}
    \def\subsection{\@startsection{subsection}{2}{\z@}{\z@}{\z@\hspace{1em}}{\formatsubsection@text}}
    \renewcommand{\paragraph}[1]{%
        {\bfseries #1}\:%
    }
\def\footnotemark[#1]{\text{\textsuperscript{\getrefnumber{#1}}}}
\def\rafootnotectr{20}
\providecommand{\incrftnotectr}{}
\renewcommand{\incrftnotectr}[1]{%
    \addtocounter{#1}{1}%
    \ifnum\value{#1}>\rafootnotectr\relax
        \setcounter{#1}{0}%
    \fi%
}
\providecommand{\footnoteref}{}
\renewcommand{\footnoteref}[1]{\protected@xdef\@thefnmark{\ref{#1}}\@footnotemark}
\let\@old@footnotetext\footnotetext
\def\footnotetext[#1]#2{%
    \incrftnotectr{footnote}%
    \@old@footnotetext[\value{footnote}]{\label{#1}#2}%
}
\def\kopfzeiledefault{
    \lhead[]{}
    \lhead[]{}
    \chead[]{}
    \rhead[]{}
    \lfoot[]{}
    \cfoot{\footnotesize\thepage}
    \rfoot[]{}
}
\def\aktuellesfont{\csname lmodern\endcsname}
\def\documentfont{%
    \gdef\aktuellesfont{\csname lmodern\endcsname}%
    \fontfamily{lmr}\fontseries{m}\selectfont%
    \renewcommand{\sfdefault}{phv}%
    \renewcommand{\ttdefault}{pcr}%
    \renewcommand{\rmdefault}{cmr}
    \renewcommand{\bfdefault}{bx}%
    \renewcommand{\itdefault}{it}%
    \renewcommand{\sldefault}{sl}%
    \renewcommand{\scdefault}{sc}%
    \renewcommand{\updefault}{n}%
}
\def\startdocumentlayoutoptions{
    \selectlanguage{british}
    \setlength{\parskip}{0.25\baselineskip}
    \setlength{\parindent}{2em}
    \kopfzeiledefault
    \documentfont
    \normalsize
}
\providecommand{\highlightTerm}{}
\renewcommand{\highlightTerm}[1]{\emph{#1}}
\def\addresseshere{%
  \bgroup
  \setlength{\parindent}{0pt}
  \enddoc@text
  \egroup
  \let\enddoc@text\relax
}
\begin{document}
\startdocumentlayoutoptions

\thispagestyle{plain}



\def\abstractname{Abstract}
\begin{abstract}
    We generalise a technique of Bhat and Skeide (2015)
    to interpolate commuting families
        $\{S_{i}\}_{i \in \mathcal{I}}$
    of contractions on a Hilbert space $\HilbertRaum$,
    to commuting families
        $\{T_{i}\}_{i \in \mathcal{I}}$
    of contractive $\Cnought$-semigroups
    on $L^{2}(\prod_{i \in \mathcal{I}}\Torus) \otimes \HilbertRaum$.
    As an excursus, we provide applications of the interpolations
    to time-discretisation and the embedding problem.
    Applied to Parrott's construction (1970),
    we then demonstrate
    for $d \in \naturals$ with $d \geq 3$
    the existence of commuting families
        $\{T_{i}\}_{i=1}^{d}$
    of contractive $\Cnought$-semigroups
    which admit no simultaneous unitary dilation.
    As an application of these counter-examples,
    we obtain the residuality
        \wrt the topology of uniform $\topWOT$-convergence on compact subsets of $\realsNonNeg^{d}$
    of
            non-unitarily dilatable
            and
            non-unitarily approximable
        $d$-parameter contractive $\Cnought$-semigroups
        on separable infinite-dimensional Hilbert spaces
    for each $d \geq 3$.
    Similar results are also developed for $d$-tuples of commuting contractions.

    And by building on the counter-examples of Varopoulos--Kaijser (1973--74),
    a \zeroone-result is obtained for the von Neumann inequality.

    Finally, we discuss applications to rigidity
    as well as the embedding problem,
    \viz that \usesinglequotes{typical} pairs
    of commuting operators can be simultaneously embedded
    into commuting pairs of $\Cnought$-semigroups,
    which extends results of Eisner (2009--10).
\end{abstract}



\def\subjclassname{Mathematics Subject Classification (2020)} 
\subjclass{47A20, 47D06}
\keywords{Semigroups of operators; dilations; interpolations; embedding problem; approximations; residuality.}
\title[Interpolation and non-dilatable families of $\Cnought$-semigroups]{Interpolation and non-dilatable families of $\Cnought$-semigroups}
\author{Raj Dahya}
\email{raj\,[\!\![dot]\!\!]\,dahya\:[\!\![at]\!\!]\:web\,[\!\![dot]\!\!]\,de}
\address{Fakult\"at f\"ur Mathematik und Informatik\newline
Universit\"at Leipzig, Augustusplatz 10, D-04109 Leipzig, Germany}

\maketitle



\setcounternach{section}{1}



\section[Introduction]{Introduction}
\label{sec:introduction:sig:article-interpolation-raj-dahya}

\firstparagraph
Dilation theory concerns itself with the problem
of embedding single operators on Hilbert or Banach spaces
(as well as operator-theoretic entities such as linear maps between trace-class operators on Hilbert spaces)
or parameterised families of operators (such as $\Cnought$-semigroups)
into larger systems which are in some suitable sense \usesinglequotes{reversible}.
One can also consider the problem of finding \textit{non-dilatable} systems.
In the discrete setting,
the existence of non-dilatable commuting families of three or more contractions
on a Hilbert space is well-known
thanks to Parrott's construction \cite{Parrott1970counterExamplesDilation}.
This raises the natural question, whether a continuous analogue for families of semigroups exists.
The present paper answers this positively as follows:
We generalise a relatively recent technique
developed by Bhat and Skeide \cite{BhatSkeide2015PureCounterexamples}
to interpolate families of contractions by families of contractive $\Cnought$-semigroups.
Applying this to Parrott's counter-examples,
we obtain the existence of commuting families
of at least $3$ contractive $\Cnought$-semigroups
which admit no dilations in the sense of Sz.-Nagy \cite{Nagy1953,Nagy1970}.

In addition to looking at the properties of individual families of
contractions or contractive $\Cnought$-semigroups,
we are concerned with spaces thereof,
topologised in a certain weak sense (defined more precisely below).
In this framework, various residuality results about $1$-parameter $\Cnought$-semigroups
and spaces of contractions on Hilbert spaces have been developed,
    \exempli
    \cite{Eisner2008categoryThmStableOpHilbert,Eisnersereny2009catThmStableSemigroups},
    \cite[Theorem~2.2]{Eisner2010typicalContraction},
    \cite[Corollary~3.2]{Krol2009},
    \cite[Theorem~4.1]{Eisnermaitrai2013typicalOperators}
    (%
        which builds on the density result established in
        \cite{Peller1976,Peller1981estimatesOperatorPolyLp}%
    ),
    \cite[Theorem~1.3]{Dahya2022weakproblem}.
Residuality results in the Banach space setting
have also been studied in connection with
the \highlightTerm{invariant subspace problem}
and \highlightTerm{hypercyclicity}
(see
    \cite{Grivaux2021invariantsubspaceLp,Grivaux2021typicalexamples,Grivaux2022localspecLp}%
).
Continuing this pursuit, the present paper develops a general \zeroone-dichotomy,
which we apply to our non-dilatable constructions
to prove within the space of
    $d$-parameter contractive $\Cnought$-semigroups
    on a separable infinite-dimensional Hilbert space,
the residuality of
    dilatable and unitarily approximable
    (\respectively non-dilatable and non-unitarily approximable)
    semigroups
for $d \in \{1,2\}$ (\respectively $d \geq 3$).

In the discrete setting an analogous split is obtained
for spaces of tuples of commuting contractions.
Furthermore, going beyond the question of dilatability,
stronger \zeroone-results are possible.

Based on numerical experimentation,
Orr Shalit has intuited via \emph{concentration of measure} phenomena
that it must be the case that either
most tuples satisfy von Neumann polynomial inequalities
or that most do not.
Our methods appear to confirm this intuition.
Indeed for $d \geq 3$, building on the
results of Varopoulos, Kaijser, \etAlia
\cite{Varopoulos1973Article,Varopoulos1974counterexamples,CrabbDavie1975Article,Shalit2021DilationBook},
we prove that the negative conclusion holds.


\subsection[Definitions and main problems]{Definitions and main problems}
\label{sec:introduction:definitions:sig:article-interpolation-raj-dahya}

\firstparagraph
Let $\HilbertRaum$ be a Hilbert space,
and
    $\BoundedOps{\HilbertRaum},\OpSpaceC{\HilbertRaum},\OpSpaceU{\HilbertRaum}$
denote the spaces of bounded, contractive, and unitary operators
on $\HilbertRaum$ respectively.
Let $\mathcal{I}$ be a non-empty index set,
    $\{S_{i}\}_{i \in \mathcal{I}} \subseteq \OpSpaceC{\HilbertRaum}$,
and
    $\{T_{i}\}_{i \in \mathcal{I}}$
be a commuting family of contractive $\Cnought$-semigroups
on $\HilbertRaum$.
We say that
    $\{S_{i}\}_{i \in \mathcal{I}}$
has a \highlightTerm{power-dilation}
if there is a commuting family
    $\{V_{i}\}_{i \in \mathcal{I}} \subseteq \OpSpaceU{\HilbertRaum^{\prime}}$
of unitaries on a Hilbert space $\HilbertRaum^{\prime}$
as well an isometry $r\in\BoundedOps{\HilbertRaum}{\HilbertRaum^{\prime}}$,
such that

    \begin{displaymath}
        \prod_{i \in \mathcal{I}}S_{i}^{n_{i}}
        = r^{\ast}\Big(\prod_{i \in \mathcal{I}}V_{i}^{n_{i}}\Big)r
    \end{displaymath}

\continueparagraph
holds for all $\mathbf{n} = (n_{i})_{i\in\mathcal{I}}\in\prod_{i\in\mathcal{I}}\naturalsZero$
with $\supp(\mathbf{n}) = \{i \in \mathcal{I} \mid n_{i} \neq 0\}$ finite.%
\footnote{%
    throughout we use the convention $S^{0} \coloneqq \onematrix_{\HilbertRaum}$
    for any bounded operator $S$ on a Hilbert space $\HilbertRaum$.
}
We say that
    $\{T_{i}\}_{i \in \mathcal{I}}$
has a \highlightTerm{simultaneous unitary dilation}
if there exists a commuting family
    $\{U_{i}\}_{i \in \mathcal{I}} \subseteq \Repr{\reals}{\HilbertRaum^{\prime}}$
of $\topSOT$-continuous unitary representations of $\reals$
on a Hilbert space $\HilbertRaum^{\prime}$
as well as an isometry $r\in\BoundedOps{\HilbertRaum}{\HilbertRaum^{\prime}}$,
such that

    \begin{displaymath}
        \prod_{i \in \mathcal{I}}T_{i}(t_{i})
        = r^{\ast}\Big(\prod_{i \in \mathcal{I}}U_{i}(t_{i})\Big)r
    \end{displaymath}

\continueparagraph
holds for all $\mathbf{t} = (t_{i})_{i\in\mathcal{I}}\in\prod_{i\in\mathcal{I}}\realsNonNeg$
with $\supp(\mathbf{t}) = \{i \in \mathcal{I} \mid t_{i} \neq 0\}$ finite.

Generalising slightly, if $(G,\cdot,e)$ is a topological group and $M \subseteq G$ a topological submonoid,
we can consider continuous contractive/unitary homomorphisms
    ${T \colon M \to \BoundedOps{\HilbertRaum}}$
over $M$ on $\HilbertRaum$,
\idest
    $T$ is an $\topSOT$-continuous map
such that
    $T(e) = \onematrix$,
    $T(xy) = T(x)T(y)$ for $x,y \in M$,
    and
    $T(x)$ is contractive/unitary for $x\in M$.
In the case of $(G,M) = (\integers^{d},\naturalsZero^{d})$, $d\in\naturals$,
there is a natural \onetoone-correspondence%
\footnote{%
    via $S(\mathbf{n}) = \prod_{i=1}^{d}S_{i}^{n_{i}}$
    for all $\mathbf{n} = (n_{i})_{i=1}^{d} \in \naturalsZero^{d}$
    and $S_{i} = S(0,0,\ldots,\underset{i}{1},\ldots,0)$
    for $i\in\{1,2,\ldots,d\}$ and all $n\in\naturalsZero$.
}
between
    contractive/unitary homomorphisms $S$ over $\naturalsZero^{d}$ on $\HilbertRaum$
and
    commuting families
        $\{S_{i}\}_{i=1}^{d}$
    of contractions/unitaries on $\HilbertRaum$,
and in the unitary case, $S$ extends uniquely%
\footnote{%
    via
        $
            S(\mathbf{n})
            = S((\max\{-n_{i},0\})_{i=1}^{d})^{\ast}
                S((\max\{n_{i},0\})_{i=1}^{d})
        $
    for $\mathbf{n} = (n_{i})_{i=1}^{d} \in \integers^{d}$.
}
to a unitary representation of $G = \integers^{d}$ on $\HilbertRaum$.
In the case of $(G,M) = (\reals^{d},\realsNonNeg^{d})$, $d\in\naturals$,
there is a natural \onetoone-correspondence%
\footnote{%
    via $T(\mathbf{t}) = \prod_{i=1}^{d}T_{i}(t_{i})$
    for all $\mathbf{t} = (t_{i})_{i=1}^{d} \in \realsNonNeg^{d}$
    and $T_{i}(t) = T(0,0,\ldots,\underset{i}{t},\ldots,0)$
    for $i\in\{1,2,\ldots,d\}$ and all $t\in\realsNonNeg$.
}
between
    continuous contractive/unitary homomorphisms $T$ over $\realsNonNeg^{d}$ on $\HilbertRaum$
and
    commuting families
        $\{T_{i}\}_{i=1}^{d}$
    of $d$-parameter contractive/unitary $\Cnought$-semigroups on $\HilbertRaum$,
and in the unitary case, $T$ extends uniquely%
\footnote{%
    via
        $
            T(\mathbf{t})
            = T((\max\{-t_{i},0\})_{i=1}^{d})^{\ast}
                T((\max\{t_{i},0\})_{i=1}^{d})
        $
    for $\mathbf{t} = (t_{i})_{i=1}^{d} \in \reals^{d}$.
}
to a continuous unitary representation of $G = \reals^{d}$ on $\HilbertRaum$.
Putting these observations together, we can define unitary dilations \`{a} la Sz.-Nagy in a more uniform manner:
Let $G$ be a topological group and $M \subseteq G$ a submonoid.
A continuous contractive homomorphism
    ${T \colon M \to \BoundedOps{\HilbertRaum}}$
is said to have a \highlightTerm{unitary dilation}
if there exist a continuous unitary representation
    ${U \in \Repr{G}{\HilbertRaum^{\prime}}}$
of $G$ on a Hilbert space $\HilbertRaum^{\prime}$,
as well an isometry $r\in\BoundedOps{\HilbertRaum}{\HilbertRaum^{\prime}}$,
such that

    \begin{displaymath}
        T(x) = r^{\ast}\:U(x)\:r
    \end{displaymath}

\continueparagraph
for all $x \in M$.
In this case, we refer to the tuple $(\HilbertRaum^{\prime},U,r)$
as the unitary dilation.
One can readily check that this coincides with the above definitions
in the cases of
    $(G,M) = (\reals^{d},\realsNonNeg^{d})$
    or
    $(\integers^{d},\naturalsZero^{d})$, $d\in\naturals$.

For any topological space $X$ and Hilbert space $\HilbertRaum$,
the topology of uniform $\topWOT$-convergence on compact subsets of $X$
is given as follows:
Let
    $(T^{(\alpha)})_{\alpha\in\Lambda}$
be a net of operator-valued functions
    ${T^{(\alpha)} \colon X \to \BoundedOps{\HilbertRaum}}$,
and let
    ${T \colon X \to \BoundedOps{\HilbertRaum}}$
be a further operator-valued function.
Then
    ${T^{(\alpha)} \underset{\alpha}{\overset{\tinytoplocWOT}{\longrightarrow}} T}$
if and only if

    \begin{displaymath}
        \sup_{x \in K}
            \abs{
                \brkt{
                    (T^{(\alpha)}(x) - T(x))\xi
                }{\eta}
            }
        \underset{\alpha}{\longrightarrow} 0
    \end{displaymath}

\continueparagraph
for all $\xi,\eta\in\HilbertRaum$ and compact $K \subseteq X$.
We refer to this as the $\toplocWOT$-topology.

Let $M$ be a topological monoid.
We shall consider the space
    $(\SpHomCs(M,\HilbertRaum),\toplocWOT)$
of continuous contractive homomorphisms
    ${T \colon M \to \BoundedOps{\HilbertRaum}}$
over $M$ on $\HilbertRaum$,
and endowed with the $\toplocWOT$-topology.
We shall also consider the subspace
    $(\SpHomUs(M,\HilbertRaum),\toplocWOT)$
of continuous unitary homomorphisms
    ${T \colon M \to \BoundedOps{\HilbertRaum}}$
over $M$ on $\HilbertRaum$.

\begin{rem}
\makelabel{rem:pw-top:sig:article-interpolation-raj-dahya}
    Let $d\in\naturals$ and
    consider the space
        $(\OpSpaceC{\HilbertRaum}^{d}_{\textup{comm}} \subseteq \OpSpaceC{\HilbertRaum}^{d}, \topPW)$
        of $d$-tuples of commuting contractions on $\HilbertRaum$
    under the \highlightTerm{weak polynomial} topology.
    This is given by the convergence condition
        ${
            \{S^{(\alpha)}_{i}\}_{i=1}^{d}
            \underset{\alpha}{\overset{\tinytopPW}{\longrightarrow}}
            \{S_{i}\}_{i=1}^{d}
        }$
    if and only if
        ${
            \prod_{i=1}^{d}(S^{(\alpha)}_{i})^{n_{i}}
            \underset{\alpha}{\overset{\tinytopWOT}{\longrightarrow}}
            \prod_{i=1}^{d}S^{n_{i}}_{i}
        }$
    for all $\mathbf{n} = (n_{i})_{i=1}^{d} \in \naturalsZero^{d}$.
    It is straightforward to see that
        $(\SpHomCs(\naturalsZero^{d},\HilbertRaum),\toplocWOT)$
        is topologically isomorphic
        to $(\OpSpaceC{\HilbertRaum}^{d}_{\textup{comm}},\topPW)$
    via the above mentioned correspondence
        (\viz ${S \mapsto \{S(0,0,\ldots,\underset{i}{1},\ldots,0)\}_{i=1}^{d}}$).
\end{rem}

Given the above topological definitions, consider the following problem:

\begin{qstn}
\label{qstn:residuality-of-unitaries:sig:article-interpolation-raj-dahya}
    Let $M$ be a topological monoid and $\HilbertRaum$ a Hilbert space.
    Is $\SpHomUs(M,\HilbertRaum)$ residual in $(\SpHomCs(M,\HilbertRaum),\toplocWOT)$?
\end{qstn}

Suppose now that $\HilbertRaum$ is separable and infinite-dimensional
and that $M$ is locally compact Polish
(\exempli
    $
        M \in \{
            \realsNonNeg^{d}, \naturalsZero^{d}
            \mid  d\in\naturals
        \}
    $%
).
Let $C$ be the space of $\topWOT$-continuous contractive-valued functions
of the form
    ${T \colon M \to \OpSpaceC{\HilbertRaum}}$.
Further let
    $X \coloneqq \SpHomCs(M,\HilbertRaum)$
    and
    $A \coloneqq \SpHomUs(M,\HilbertRaum)$.
It is known that $(C,\toplocWOT)$ and $(A,\toplocWOT)$
are Polish spaces
(see \cite[Proposition~1.16 and Proposition~1.18]{Dahya2022weakproblem}).
In particular, the subspace $X$ of $(C,\toplocWOT)$
is a second-countable metrisable space,
and by Alexandroff's lemma
(see
    \cite[Theorem~3.11]{Kechris1995BookDST},
    \cite[Lemmata~3.33--34]{Aliprantis2006BookAn}%
),
$A$ is a $G_{\delta}$-subset in $(X,\toplocWOT)$.
Under these assumptions on $M$ and $\HilbertRaum$,
the above problem,
\viz whether $A$ is residual in $(X,\toplocWOT)$,
is thus equivalent to
whether $A$ is dense inside $(X,\toplocWOT)$.

Now, we often consider topological monoids $M$ as submonoids of topological groups, $G$.
In case unitary semigroups extend uniquely to unitary representations
(this holds \exempli for
    $
        (G,M) \in \{
            (\reals^{d},\realsNonNeg^{d}),
            (\integers^{d},\naturalsZero^{d})
            \mid
            d\in\naturals
        \}
    $%
),
recent work in \cite{Dahya2024approximation}
has shown that the density question
is closely related to the existence of unitary dilations
(this will be made clear in \Cref{prop:dilatability-equiv-to-weak-unitary-approximability:sig:article-interpolation-raj-dahya} below).
This leads to a related problem, which is interesting in and of itself:

\begin{qstn}
\label{qstn:existence-of-dilations:sig:article-interpolation-raj-dahya}
    Let $G$ be a topological group, $M \subseteq G$ a submonoid,
    and $\HilbertRaum$ a Hilbert space.
    Does every $T \in \SpHomCs(M,\HilbertRaum)$ have a unitary dilation?
\end{qstn}



\subsection[Summary of main results]{Summary of main results}
\label{sec:introduction:results:sig:article-interpolation-raj-dahya}

\firstparagraph
This paper addresses the above two problems
for the concrete cases of
    $(G,M) = (\integers^{d},\naturalsZero^{d})$
    and $(\reals^{d},\realsNonNeg^{d})$
    for $d\in\naturals$
and (separable) infinite-dimensional $\HilbertRaum$.
In discrete-time, Parrott's construction
(see
    \cite[\S{}3]{Parrott1970counterExamplesDilation},
    \cite[\S{}I.6.3]{Nagy1970}%
)
showed that there always exists non-power dilatable families of $d \geq 3$ contractions,
provided the dimension of the Hilbert spaces was suitably large.
For a long time no counter-examples to simultaneously unitarily dilatable
families seemed to be be known in the continuous setting.
In light of
    And\^{o}'s power-dilation theorem for pairs of contractions
    (see \cite{Ando1963pairContractions})
    and S\l{}oci\'{n}ski's theorem on
    simultaneous unitary dilations of pairs of commuting contractive $\Cnought$-semigroups
    (see
        \cite{Slocinski1974},
        \cite[Theorem~2]{Slocinski1982},
        and
        \cite[Theorem~2.3]{Ptak1985}%
    ),
the bound $d=3$ is optimal in the discrete setting
and could also be postulated to be optimal in the continuous setting.
We shall show that this is indeed the case.
To achieve this, our first goal is to reduce
    non-dilatability in the continuous-time setting
    to
    non-dilatability in the discrete-time setting.
This in turn is achieved via interpolation.

Let $\Torus$ denote the unit circle $\{z\in\complex \mid \abs{z} = 1\}$
endowed with the unique Haar-probability measure.
For any non-empty index set $\mathcal{I}$,
the product space
    $\prod_{i\in\mathcal{I}}\Torus$
is equipped with the product (probability) measure.%
\footnote{%
    For the construction of the product of arbitrarily many probability spaces,
    see \exempli \cite[Example~14.37]{Klenke2008probTheory}.
}
We consider the Hilbert space tensor product
$L^{2}(\prod_{i\in\mathcal{I}}\Torus) \otimes \HilbertRaum$,%
\footnote{%
    For Hilbert spaces $H_{1},H_{2}$
    the Hilbert space tensor product
        $H_{1} \otimes H_{2}$
    can be viewed as being isomorphic to
        $\ell^{2}(B_{1} \times B_{2})$,
    where
        $B_{i} \subseteq H_{i}$
        is an orthonormal basis (ONB)
        for each $i$.
    For
        $\xi_{1},\eta_{1} \in H_{1}$
    and
        $\xi_{2},\eta_{2} \in H_{2}$
    we have
        $
            \brkt{
                \xi_{1} \otimes \xi_{2}
            }{
                \eta_{1} \otimes \eta_{2}
            }_{H_{1} \otimes H_{2}}
            = \brkt{\xi_{1}}{\eta_{1}}_{H_{1}}
                \brkt{\xi_{2}}{\eta_{2}}_{H_{2}}
        $,
    where \exempli
        $
            \xi_{1} \otimes \xi_{2}
            = (
                \brkt{\xi_{1}}{e_{1}}_{H_{1}}
                \brkt{\xi_{2}}{e_{2}}_{H_{2}}
            )_{(e_{1},e_{2}) \in B_{1} \times B_{2}}
        $.
    For the algebraic and geometric definitions
    as well as tensor products of bounded operators,
    we refer the reader to
        \cite[\S{}2.6]{KadisonRingrose1983volI},
        \cite[\S{}6.3]{Murphy1990}
        \cite[E3.2.19--21]{Pedersen1989analysisBook}.
}
viewing $L^{2}(\prod_{i\in\mathcal{I}}\Torus)$
as an \usesinglequotes{auxiliary space}.
Our first main result extends
a recent construction due to Bhat and Skeide
\cite{BhatSkeide2015PureCounterexamples}:

\begin{schattierteboxdunn}[backgroundcolor=leer,nobreak=true]
\begin{lemm}[Generalised Bhat--Skeide Interpolation]
\makelabel{lemm:bhat-skeide:multi:sig:article-interpolation-raj-dahya}
    Let $\mathcal{I}$ be a non-empty index set and
        $\{S_{i}\}_{i \in \mathcal{I}}$
    be a commuting family of bounded operators
    (\respectively contractions)
    on a Hilbert space $\HilbertRaum$.
    There exists a commuting family
        $\{T_{i}\}_{i \in \mathcal{I}}$
    of $\Cnought$-semigroups
    (\respectively contractive $\Cnought$-semigroups)
    on $L^{2}(\prod_{i\in\mathcal{I}}\Torus) \otimes \HilbertRaum$,
    such that

        \begin{restoremargins}
        \begin{equation}
        \label{eq:interpolation:sig:article-interpolation-raj-dahya}
            \prod_{i\in\mathcal{I}}T_{i}(n_{i})
            = \prod_{i\in\mathcal{I}}
                \onematrix \otimes S_{i}^{n_{i}}
        \end{equation}
        \end{restoremargins}

    \continueparagraph
    for all $\mathbf{n} = (n_{i})_{i\in\mathcal{I}}\in\prod_{i\in\mathcal{I}}\naturalsZero$
    with $\supp(\mathbf{n}) = \{i \in \mathcal{I} \mid n_{i} \neq 0\}$ finite.
\end{lemm}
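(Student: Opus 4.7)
The plan is to reduce the multi-parameter interpolation to the single-operator case of Bhat and Skeide \cite{BhatSkeide2015PureCounterexamples} and to distribute the resulting semigroups across the independent Torus factors indexed by $\mathcal{I}$. First I would exploit the probability-space factorisation $\prod_{j\in\mathcal{I}}\Torus \cong \Torus \times \prod_{j\neq i}\Torus$ (for each fixed $i\in\mathcal{I}$), giving the Hilbert-space identification
$L^{2}(\prod_{j\in\mathcal{I}}\Torus) \otimes \HilbertRaum \cong L^{2}(\prod_{j\neq i}\Torus) \otimes L^{2}(\Torus) \otimes \HilbertRaum$.
Under this identification, any $\Cnought$-semigroup $\tilde{T}$ on the last two tensor factors lifts to one on the full space by tensoring with $\onematrix_{L^{2}(\prod_{j\neq i}\Torus)}$.

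Second, I would appeal to the single-operator Bhat--Skeide interpolation: for each contraction $S_{i}$, their construction yields a contractive $\Cnought$-semigroup $\tilde{T}_{i}$ on $L^{2}(\Torus) \otimes \HilbertRaum$ satisfying $\tilde{T}_{i}(n) = \onematrix \otimes S_{i}^{n}$ for every $n \in \naturalsZero$. For the bounded but non-contractive case, I would rescale each $S_{i}$ by a common factor $e^{-\omega}$ with $\omega$ exceeding $\max_{i}\log\norm{S_{i}}$ to obtain contractions, apply the contractive version, and then rescale the resulting interpolating semigroups by $e^{\omega t}$ to obtain the desired $\Cnought$-semigroups of bounded operators. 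I would then set $T_{i}(t) := \onematrix_{L^{2}(\prod_{j\neq i}\Torus)} \otimes \tilde{T}_{i}(t)$ under the factorisation above. The semigroup law, strong continuity, and (where applicable) contractivity carry over to each $T_{i}$ automatically.

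The main obstacle will be verifying that the family $\{T_{i}\}_{i\in\mathcal{I}}$ so constructed is commuting. Since $T_{i}(t)$ and $T_{j}(s)$ act nontrivially on disjoint Torus slots for $i\neq j$, the commutator $[T_{i}(t),T_{j}(s)]$ reduces, after reorganising tensor factors, to the commutator of the respective $\HilbertRaum$-components of $\tilde{T}_{i}(t)$ and $\tilde{T}_{j}(s)$ (viewed as operators on $L^{2}(\Torus)_{i} \otimes L^{2}(\Torus)_{j} \otimes \HilbertRaum$). The decisive point is therefore to show that the Bhat--Skeide construction is sufficiently functorial in its input, namely that $\tilde{T}(t)$ decomposes as a weak-operator integral of elementary tensors whose $\HilbertRaum$-factors commute with every operator commuting with $S$ --- this is essentially built into their construction, which proceeds via the spectral/functional calculus of $S$'s minimal unitary dilation followed by a compression back to $\HilbertRaum$, but writing this out carefully is the nontrivial ingredient. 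Granted this property, the hypothesis $S_{i}S_{j}=S_{j}S_{i}$ propagates to the $\HilbertRaum$-factors and hence to $T_{i}(t)$ and $T_{j}(s)$.

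Finally, \eqref{eq:interpolation:sig:article-interpolation-raj-dahya} is immediate from the Bhat--Skeide property applied in each slot: for $\mathbf{n}$ with $\supp(\mathbf{n})$ finite, each factor $T_{i}(n_{i})$ reduces to $\onematrix \otimes S_{i}^{n_{i}}$ on the full space, and the finite product telescopes to $\onematrix \otimes \prod_{i\in\mathcal{I}}S_{i}^{n_{i}}$, as claimed.
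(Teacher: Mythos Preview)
Your overall strategy is correct and essentially coincides with the paper's: distribute the single-operator Bhat--Skeide interpolation across the independent $\Torus$-factors and verify commutation. However, you have a significant misconception about what the Bhat--Skeide construction actually is, and this leads you to overcomplicate the commutation argument.

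The construction does \emph{not} proceed ``via the spectral/functional calculus of $S$'s minimal unitary dilation followed by a compression back to $\HilbertRaum$.'' It is completely explicit: $\tilde{T}(t) = (U(t)\otimes\onematrix)\bigl(P(t)\otimes S^{\lfloor t\rfloor} + (\onematrix-P(t))\otimes S^{\lfloor t\rfloor+1}\bigr)$, where $U(t)$ is a rotation Koopman operator and $P(t)$ a multiplication projection on $L^{2}(\Torus)$. The $\HilbertRaum$-factors are literally just $S^{\lfloor t\rfloor}$ and $S^{\lfloor t\rfloor+1}$, so the ``functoriality'' you seek is trivial: any operator commuting with $S$ commutes with every power of $S$. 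The paper simply observes that for $i\neq j$ the auxiliary operators $U_{i},P_{i}$ commute with $U_{j},P_{j}$ (independent coordinates) and $S_{i}^{m}$ commutes with $S_{j}^{n}$ (hypothesis), whence $T_{i}(s)T_{j}(t)=T_{j}(t)T_{i}(s)$ follows immediately from the explicit formula. No weak-operator integrals are needed.

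Two minor points. First, your rescaling detour for the bounded non-contractive case is unnecessary: the explicit formula applies verbatim to any bounded $S$, yielding a $\Cnought$-semigroup with growth bound $\lVert T(t)\rVert \leq 2\max\{1,\lVert S\rVert\}^{\lfloor t\rfloor+1}$. Moreover, your ``common factor $e^{-\omega}$ with $\omega>\max_{i}\log\lVert S_{i}\rVert$'' could fail for infinite $\mathcal{I}$ with unbounded norms. Second, since you do not actually know the construction, you should read \cite{BhatSkeide2015PureCounterexamples} (or the paper's \S2.1) before writing this up; the proof becomes a two-line verification once the formula is in hand.
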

\end{schattierteboxdunn}

As an excursus we apply the Bhat--Skeide interpolation to the embedding problem
in \S{}\ref{sec:interpolation:embed:sig:article-interpolation-raj-dahya}.
And we discuss in \S{}\ref{sec:interpolation:application:sig:article-interpolation-raj-dahya}
a time-discretised version and show that the interpolation and its discretisations
enjoy a certain natural property (see \Cref{prop:multilinear-interpolation:sig:article-interpolation-raj-dahya}).
Then in \S{}\ref{sec:interpolation:counterexamples:sig:article-interpolation-raj-dahya}
we use the interpolation to answer
    \Cref{qstn:existence-of-dilations:sig:article-interpolation-raj-dahya}
negatively for $d$-parameter semigroups with $d \geq 3$:

\begin{schattierteboxdunn}[backgroundcolor=leer,nobreak=true]
\begin{thm}[Non-dilatable families of semigroups]
\makelabel{thm:existence-of-non-dilatable:sig:article-interpolation-raj-dahya}
    Let $\mathcal{I}$ be a non-empty index set
    and $\HilbertRaum$ be an infinite-dimensional Hilbert space.
    If $\card{\mathcal{I}} \geq 3$,
    there exists a commuting family
        $\{T_{i}\}_{i\in\mathcal{I}}$
    of contractive $\Cnought$-semigroups on $\HilbertRaum$,
    which admit no simultaneous unitary dilation.
    Furthermore, the $T_{i}$ can be chosen to have bounded generators.
\end{thm}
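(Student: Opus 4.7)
My plan is to bootstrap from Parrott's discrete counter-example via the Bhat--Skeide interpolation. First, I fix three distinct indices $i_{1},i_{2},i_{3}\in\mathcal{I}$ and invoke Parrott's construction to obtain three commuting contractions $S_{i_{1}},S_{i_{2}},S_{i_{3}}$ on a Hilbert space $H_{0}$ admitting no simultaneous unitary power-dilation. I complete these to a family $\{S_{i}\}_{i\in\mathcal{I}}$ by setting $S_{i}=\onematrix_{H_{0}}$ for the remaining indices; the completed family inherits power-non-dilatability, since any power-dilation of the whole family would restrict to one for Parrott's three. Applying \Cref{lemm:bhat-skeide:multi:sig:article-interpolation-raj-dahya} then produces a commuting family $\{T_{i}\}_{i\in\mathcal{I}}$ of contractive $\Cnought$-semigroups on $\mathcal{K}\coloneqq L^{2}(\prod_{i\in\mathcal{I}}\Torus)\otimes H_{0}$ satisfying the interpolation identity. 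If $\HilbertRaum\neq\mathcal{K}$, I embed $\mathcal{K}$ as a closed subspace of $\HilbertRaum$ (possible upon choosing $H_{0}$ of suitable cardinality, since $\HilbertRaum$ is infinite-dimensional) and extend each $T_{i}$ by the identity on $\mathcal{K}^{\perp}$; the extended semigroups remain commuting and contractive, and any simultaneous dilation thereof restricts via $\mathcal{K}$-invariance to a dilation of the original $T_{i}$'s.

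The core step is to derive a contradiction from an assumed simultaneous unitary dilation $(\HilbertRaum',U,r)$ of $\{T_{i}\}_{i\in\mathcal{I}}$. To this end, I introduce the isometry $j\colon H_{0}\to\mathcal{K}$ defined by $j(\xi)=\onevector\otimes\xi$, where $\onevector\in L^{2}(\prod_{i\in\mathcal{I}}\Torus)$ denotes the constant function $1$ (a unit vector, as the product measure is a probability measure). A direct computation yields $j^{\ast}(\onematrix\otimes A)\,j=A$ for any bounded operator $A$ on $H_{0}$. Setting $r''\coloneqq rj\colon H_{0}\to\HilbertRaum'$, which is again an isometry, I evaluate the dilation identity at integer times $\mathbf{n}$ and apply the interpolation identity to obtain
\[
\prod_{i\in\mathcal{I}}S_{i}^{n_{i}}
=j^{\ast}\Big(\prod_{i\in\mathcal{I}}T_{i}(n_{i})\Big)j
=(r'')^{\ast}\Big(\prod_{i\in\mathcal{I}}U_{i}(n_{i})\Big)r''
\]
for all finitely supported $\mathbf{n}\in\prod_{i\in\mathcal{I}}\naturalsZero$. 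Since the unitaries $U_{i}(1)$, $i\in\mathcal{I}$, commute on $\HilbertRaum'$, this exhibits a simultaneous unitary power-dilation of $\{S_{i}\}_{i\in\mathcal{I}}$, contradicting the choice of the Parrott family.

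For the bounded-generator assertion, I would inspect the explicit form of the interpolation in the proof of \Cref{lemm:bhat-skeide:multi:sig:article-interpolation-raj-dahya}: each $T_{i}(t)$ should emerge as a norm-continuous family built from bounded operators on the auxiliary $L^{2}$-factor twisted with powers of $S_{i}$, whence its generator is bounded; this property is preserved under the identity extension used in the embedding into $\HilbertRaum$. I expect the principal technical obstacle to lie precisely here, namely in verifying that the Bhat--Skeide construction can be arranged so that the $T_{i}$ simultaneously commute across $i\in\mathcal{I}$, interpolate their $S_{i}$ at integer times, \emph{and} have bounded generators. The conceptual reduction (Parrott $\to$ interpolate $\to$ compress via $\onevector$) is otherwise largely mechanical once the Bhat--Skeide lemma is in hand.
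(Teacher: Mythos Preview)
Your reduction to Parrott via the compression $j\colon\xi\mapsto\onevector\otimes\xi$ is correct and in fact slightly slicker than the paper's route: the paper instead observes that $\{\onematrix_{L^{2}(Z)}\otimes S_{i}\}_{i=1}^{3}$ is again of Parrott's form (with $R_{i}$ replaced by $\onematrix\otimes R_{i}$) and invokes Parrott a second time, whereas you compress one step further and land directly on the original Parrott triple. One small wrinkle: by interpolating the \emph{entire} family $\{S_{i}\}_{i\in\mathcal{I}}$ you force the auxiliary factor $L^{2}(\prod_{i\in\mathcal{I}}\Torus)$, whose dimension is $\max\{\lvert\mathcal{I}\rvert,\aleph_{0}\}$; if $\lvert\mathcal{I}\rvert>\dim\HilbertRaum$ this cannot be embedded into $\HilbertRaum$. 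The paper avoids this by interpolating only the three relevant indices (so $Z=\Torus^{3}$) and setting $T_{i}\equiv\onematrix$ for the remaining $i$; you should do the same.

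The genuine gap is the bounded-generator assertion. Your expectation that the Bhat--Skeide interpolants are norm-continuous is false: for $t\in(0,\tfrac{1}{2})$ the projection $\onematrix-P(t)$ is nonzero, and taking any unit vector $f\otimes\xi$ with $f$ supported in the arc $\{e^{\iunit 2\pi s}:s\in[1-t,1)\}$ one has $P(t)f=0$ and $U(t)f\perp f$, whence $\lVert T(t)(f\otimes\xi)-f\otimes\xi\rVert^{2}=\lVert S\xi\rVert^{2}+1\geq 1$. Thus $\lVert T(t)-\onematrix\rVert\geq 1$ for all small $t>0$, and the generator of $T$ is unbounded. The paper's argument here is entirely different and indirect: it replaces each $T_{i}$ by its Yosida approximants $T_{i}^{(\alpha_{i})}$ (which commute across $i$ and converge to $T_{i}$ in $\toplocSOT$), and uses the equivalence between unitary dilatability and $\toplocWOT$-approximability by unitary semigroups (\Cref{prop:dilatability-equiv-to-weak-unitary-approximability:sig:article-interpolation-raj-dahya}) to conclude that if every approximant were dilatable then $T$ would be too---a contradiction. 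Hence some approximant, which has bounded generators by construction, is already non-dilatable.
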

\end{schattierteboxdunn}

\begin{rem}
\label{rem:shalit-skeide-result:sig:article-interpolation-raj-dahya}
    In the non-classical setting of CP-semigroups (on unital $C^{\ast}$-algebras),
    Shalit and Skeide \cite[Corollary~18.10]{ShalitSkeide2022multiparam}
    have recently constructed
    contractive CP- \respectively Markov semigroups over $\realsNonNeg^{3}$
    which admit no \usesinglequotes{strong} \respectively \usesinglequotes{weak} dilations
    (see
        \cite[\S{}8.1]{Shalit2021DilationBook},
        \cite[Definition~2.3]{ShalitSkeide2022multiparam}%
    ).
    We note here two things:
    These counter-examples and our ones in \Cref{thm:existence-of-non-dilatable:sig:article-interpolation-raj-dahya}
    do not appear to be derivable from one other.
    Moreover, the approach in \cite{ShalitSkeide2022multiparam}
    used to construct multi-parameter counter-examples
    is not based on the interpolation of Bhat and Skeide \cite{BhatSkeide2015PureCounterexamples},
    rather other interpolation techniques are developed.
\end{rem}

In \S{}\ref{sec:residuality:sig:article-interpolation-raj-dahya}
we proceed to develop a \zeroone-dichotomy for the general setting of homomorphisms.
Applying this to our counter-examples,
we obtain the following answer to \Cref{qstn:residuality-of-unitaries:sig:article-interpolation-raj-dahya}:

\begin{schattierteboxdunn}[backgroundcolor=leer,nobreak=true]
\begin{cor}
\makelabel{cor:zero-one-for-unitaries:sig:article-interpolation-raj-dahya}
    Let $d \in \naturals$ and $\HilbertRaum$ be a separable infinite-dimensional Hilbert space.
    Consider the space
        $X \coloneqq \SpHomCs(\realsNonNeg^{d},\HilbertRaum)$
        of $d$-parameter contractive $\Cnought$-semigroups on $\HilbertRaum$
        under the $\toplocWOT$-topology
    as well as the subspaces
        $A \coloneqq \SpHomUs(\realsNonNeg^{d},\HilbertRaum)$
        of $d$-parameter unitary $\Cnought$-semigroups on $\HilbertRaum$
    and $D$ of $d$-parameter semigroups admitting a unitary dilation.
    Then $D = \quer{A}$ and

    \begin{kompaktenum}{\bfseries (a)}[\rtab]
    \item\punktlabel{1}
        if $d \leq 2$,
            then $A$ is a dense $G_{\delta}$-subset in $(X,\toplocWOT)$.
    \item\punktlabel{2}
        if $d \geq 3$, then $\quer{A}$ is meagre in $(X,\toplocWOT)$.
    \end{kompaktenum}

    Consider the space
        $X \coloneqq \OpSpaceC{\HilbertRaum}^{d}_{\textup{comm}} \subseteq \OpSpaceC{\HilbertRaum}^{d}$
        of $d$-tuples of commuting contractions on $\HilbertRaum$
        under the $\topPW$-topology
    as well as the subspaces
        $A \coloneqq \OpSpaceU{\HilbertRaum}^{d}_{\textup{comm}} \subseteq X$,
        of $d$-tuples of commuting unitaries on $\HilbertRaum$
    and $D$ of $d$-tuples admitting a power-dilation.
    Then $D = \quer{A}$ and

    \begin{kompaktenum}{\bfseries (a')}[\rtab]
    \item
        if $d \leq 2$, then $A$ is a dense $G_{\delta}$-subset in $(X,\topPW)$.
    \item
        if $d \geq 3$, then $\quer{A}$ is meagre in $(X,\topPW)$.
    \end{kompaktenum}

    \nvraum{1}

\end{cor}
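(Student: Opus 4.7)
The plan is to decompose the proof into two parallel arguments, one for the continuous parametrisation $M = \realsNonNeg^{d}$ and one for the discrete parametrisation $M = \naturalsZero^{d}$. In both cases the strategy is the same: combine the equivalence of dilatability with weak unitary approximability from \Cref{prop:dilatability-equiv-to-weak-unitary-approximability:sig:article-interpolation-raj-dahya} to identify $D = \overline{A}$, then appeal to classical positive dilation theorems for $d \leq 2$ and to the \zeroone-dichotomy of \S{}\ref{sec:residuality:sig:article-interpolation-raj-dahya} together with the non-dilatable counter-examples for $d \geq 3$.

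The identity $D = \overline{A}$ comes essentially for free: the inclusion $A \subseteq D$ is the tautology that a unitary homomorphism dilates itself (taking $r = \onematrix$), while the equivalence $T \in D \iff T \in \overline{A}$ is precisely the content of \Cref{prop:dilatability-equiv-to-weak-unitary-approximability:sig:article-interpolation-raj-dahya}. Since $A$ has already been argued in the introduction to be a $G_{\delta}$-subset of $(X,\toplocWOT)$ (\respectively of $(X,\topPW)$) via Alexandroff's lemma, it remains only to decide density or meagreness of $\overline{A}$ in each regime.

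For parts (a) and (a'), I would invoke classical positive dilation theorems: Sz.-Nagy's dilation theorem when $d = 1$, S\l{}oci\'{n}ski's simultaneous $\Cnought$-dilation theorem when $d = 2$ in the continuous setting, and And\^{o}'s theorem when $d = 2$ in the discrete setting. Each of these yields $D = X$, whence $\overline{A} = X$ by the first step, so $A$ is dense; combined with the $G_{\delta}$ property this gives the dense $G_{\delta}$ conclusion.

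For parts (b) and (b'), I would apply the \zeroone-dichotomy from \S{}\ref{sec:residuality:sig:article-interpolation-raj-dahya}, which asserts in the present setting that the closed set $\overline{A}$ is either all of $X$ or meagre. For $d \geq 3$ in the continuous setting, \Cref{thm:existence-of-non-dilatable:sig:article-interpolation-raj-dahya} produces a $T \in X \setminus D = X \setminus \overline{A}$, so $\overline{A} \subsetneq X$ and the dichotomy forces $\overline{A}$ to be meagre. For (b'), Parrott's classical counter-examples \cite{Parrott1970counterExamplesDilation} play the same role in the discrete setting. The main conceptual obstacle is verifying that the \zeroone-dichotomy of \S{}\ref{sec:residuality:sig:article-interpolation-raj-dahya} applies in the precise form needed here, \idest that non-density of $A$ already forces meagreness of the whole closure $\overline{A}$; this is presumably the target theorem of that section, and once it is in hand the remaining argument reduces to assembling the ingredients already established earlier in the paper.
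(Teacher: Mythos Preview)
Your proposal is correct and follows essentially the same approach as the paper: identify $D = \overline{A}$ via \Cref{prop:dilatability-equiv-to-weak-unitary-approximability:sig:article-interpolation-raj-dahya}, then invoke the \zeroone-dichotomy of \Cref{lemm:zero-one-dichotomy:sig:article-interpolation-raj-dahya} (\respectively \Cref{prop:zero-one-for-spaces-of-d-tuples-of-contractions:sig:article-interpolation-raj-dahya}) and decide it using S\l{}oci\'{n}ski/And\^{o} for $d \leq 2$ and \Cref{thm:existence-of-non-dilatable:sig:article-interpolation-raj-dahya}/Parrott for $d \geq 3$. The only cosmetic difference is that the paper folds the $G_{\delta}$ conclusion directly into the statement of the dichotomy lemma rather than invoking Alexandroff separately.
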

\end{schattierteboxdunn}

In the discrete setting,
we shall apply similar reasoning to obtain
an analogous \zeroone-dichotomy
for $d$-tuples of commuting contractions
satisfying the von Neumann inequality
(see \Cref{rem:vN-inequality:sig:article-interpolation-raj-dahya}).



\subsection[Notation]{Notation}
\label{sec:introduction:notation:sig:article-interpolation-raj-dahya}

\firstparagraph
Throughout we fix the following notation and conventions:

\begin{itemize}
\item
    We write
        $\naturalsPos = \{1,2,\ldots\}$,
        $\naturalsZero = \{0,1,2,\ldots\}$,
        $\realsNonNeg = \{r\in\reals \mid r\geq 0\}$,
        and
        $\Torus = \{z\in\complex \mid \abs{z} = 1\}$.
    To distinguish from indices $i$ we use $\iunit$ for the imaginary unit $\sqrt{-1}$.
\item
    We write elements of product spaces in bold
    and denote their components in light face fonts with appropriate indices,
    \exempli the $i$\textsuperscript{th} components of
        $\mathbf{t} \in \realsNonNeg^{d}$
        and
        $\mathbf{z} \in \prod_{i \in \mathcal{I}}\Torus$
        are denoted
            $t_{i}$ and $z_{i}$
        respectively.
\item
    All Hilbert spaces shall be takten to have dimension at least $1$.
    This is to avoid the issue that on $0$-dimensional spaces both
    the identity and zero operators are equal,
    which unnecessarily complicates matters.
\item
    For Hilbert spaces $\HilbertRaum,\HilbertRaum^{\prime}$
    the sets
        $\BoundedOps{\HilbertRaum}$
        and
        $\BoundedOps{\HilbertRaum}{\HilbertRaum^{\prime}}$
    denote the bounded linear operators on $\HilbertRaum$
    \respectively the bounded linear operators between $\HilbertRaum$ and $\HilbertRaum^{\prime}$.
    Further,
        $\OpSpaceC{\HilbertRaum}$
        and
        $\OpSpaceU{\HilbertRaum}$
    denote the set of contractions and unitaries on $\HilbertRaum$ respectively.
    And for $d\in\naturals$,
        $\OpSpaceC{\HilbertRaum}^{d}_{\textup{comm}}$
        and
        $\OpSpaceU{\HilbertRaum}^{d}_{\textup{comm}}$
    are denote the set of commuting $d$-tuples of contractions and unitaries on $\HilbertRaum$ respectively.
\item
    For Hilbert spaces $H_{1},H_{2},\ldots,H_{n}$, $n\in\naturals$,
    the tensor product ${H_{1} \otimes H_{2} \otimes \ldots \ldots \otimes H_{n}}$
    shall always denote the Hilbert space tensor product,
    \idest the completion of the algebraic tensor product
    under the induced norm.
\item
    For a Hilbert space $\HilbertRaum$ and topological group $G$,
    the set $\Repr{G}{\HilbertRaum}$
    denotes the set of $\topSOT$-continuous unitary representations
        ${U \colon G \to \BoundedOps{\HilbertRaum}}$
    of $G$ on $\HilbertRaum$.
\item
    For a Hilbert space $\HilbertRaum$,
    we denote the identity operator as $\onematrix_{\HilbertRaum}$.
    If it is clear from the context, we simply write $\onematrix$.
    For example, in the context of $\HilbertRaum = H_{0} \otimes H_{1}$
    for Hilbert spaces $H_{0}, H_{1}$,
    given
        $R\in\BoundedOps{H_{0}}$,
        $S\in\BoundedOps{H_{1}}$,
    the expression
        $(R \otimes \onematrix)(\onematrix \otimes S) + \onematrix$
    is to be interpreted as
        $(R \otimes \onematrix_{H_{1}})(\onematrix_{H_{0}} \otimes S) + \onematrix_{\HilbertRaum}$.
\item
    $\toplocWOT$ denotes the topology of uniform $\topWOT$-convergence
    on compact subsets (see \S{}\ref{sec:introduction:definitions:sig:article-interpolation-raj-dahya}).
\item
    $\topPW$ denotes the weak polynomial topology on the space
    of tuples of contractions (see \Cref{rem:pw-top:sig:article-interpolation-raj-dahya}).
\item
    For $n\in\naturals$ and $i,j\in\{1,2,\ldots,n\}$,
        $E_{ij} \in M_{n}(\complex)$
    denotes the matrix containing a $1$ in entry $(i,j)$ and $0$'s elsewhere.
\end{itemize}

In any context, given a choice $(G,M)$ of a topological group and submonoid $M \subseteq G$,
we shall repeatedly refer to the spaces
    $A^{\ast} \subseteq A \subseteq X \subseteq C$
    and
    $D \subseteq X$,
where

\begin{itemize}
\item $C \coloneqq$ the set of $\topWOT$-continuous contraction-valued functions
    ${T \colon M \to \OpSpaceC{\HilbertRaum}}$;
\item $X \coloneqq \SpHomCs(M,\HilbertRaum)$,
    the set of $\topSOT$-continuous contractive homomorphisms;
\item $A \coloneqq \SpHomUs(M,\HilbertRaum)$,
    the set of $\topSOT$-continuous unitary homomorphisms;
\item $A^{\ast} \coloneqq \{ U\restr{M} \mid U \in\Repr{G}{\HilbertRaum}\}$;
\item $D \coloneqq \{T \in X \mid T~\text{has a unitary dilation}\}$.
\end{itemize}

These spaces shall generally be equipped with the $\toplocWOT$-topology
and
    $\quer{A}, \quer{A^{\ast}}$
shall always denote the closures of $A, A^{\ast}$ within $(X,\toplocWOT)$.
Note further that $C, X, A$ can be defined for topological monoids $M$
which are not submonoids of topological groups.




\section[Interpolation and non-dilatability]{Interpolation and non-dilatability}
\label{sec:interpolation:sig:article-interpolation-raj-dahya}

\firstparagraph
We first present the interpolation result of Bhat and Skeide,
which extends a single contraction to a $\Cnought$-semigroup.
We then generalise this to extend arbitrary commuting families of contractions
to commuting families of contractive $\Cnought$-semigroups.
In the following we use the notation
$\fractional{t}$ and $\floor{t}$
to denote the fractional and integer part of a number $t\in\reals$,
\idest
    $\floor{t} = \max\{n\in\integers \mid n \leq t\}$
    and
    $\fractional{t} = t - \floor{t} \in [0,\:1)$.


\subsection[Bhat--Skeide interpolation for contractions]{Bhat--Skeide interpolation for contractions}
\label{sec:interpolation:single:sig:article-interpolation-raj-dahya}

\firstparagraph
In \cite[Lemma~2.4~(1)]{BhatSkeide2015PureCounterexamples}
Bhat and Skeide developed an interpolation result for isometries
over Hilbert $C^{\ast}$-modules (a generalisation of Hilbert spaces).
Their construction, however, also works for contractions over Hilbert spaces.
We slightly modify the spaces and notation,
but the constructions remain essentially the same.
To start off, for $t\in\reals$ we let
    ${\phi_{t} \colon \Torus \to \Torus}$
    and
    ${p_{t} \colon \Torus \to \reals}$
be defined via

    \begin{shorteqnarray}
        \phi_{t}(z) &\coloneqq &e^{-\iunit 2\pi t}z = e^{\iunit 2\pi (s-t)}\\
        p_{t}(z) &\coloneqq &\einser_{[0,\:1-\fractional{t})}(s)\\
    \end{shorteqnarray}

\continueparagraph
for $z = e^{\iunit 2\pi s} \in \Torus$, $s\in[0,\:1)$.
Since $\phi_{t}$ is a measure-preserving bijection,
the Koopman-operator $U(t)\in\BoundedOps{L^{2}(\Torus)}$
defined by
    $U(t)f \coloneqq f \circ \phi_{t}$
for $f \in L^{2}(\Torus)$ is unitary.
It is straightforward to see that
    ${\reals\ni t \mapsto f \circ \phi_{t} \in L^{2}(\Torus)}$
is continuous for $f \in \Cts{\Torus}$,
and that
    $\phi_{s + t} = \phi_{s} \circ \phi_{t}$
for $s,t\in\reals$.
Thus $U \in \Repr{\reals}{L^{2}(\Torus)}$.
Since $p_{t}$ for each $t\in\reals$ is a $\{0,1\}$-valued measurable function,
the multiplication operator
    $P(t) \coloneqq M_{p_{t}} \in \BoundedOps{L^{2}(\Torus)}$
is a projection.
For $t_{0}\in\reals$ and ${(t_{0},\:\infty) \ni t \longrightarrow t_{0}}$
we have $\fractional{t} \longrightarrow \fractional{t_{0}}$
and hence
    $
        P(t) = M_{
            \einser_{
                \{
                    e^{\iunit 2\pi s}
                    \mid
                    s \in [0,\:1-\fractional{t})
                \}
            }
        }
        \overset{\tinytopSOT}{\longrightarrow}
            M_{
                \einser_{
                    \{
                        e^{\iunit 2\pi s}
                        \mid
                        s \in [0,\:1-\fractional{t_{0}})
                    \}
                }
            }
            = P(t_{0})
    $.
Thus $P$ is right-$\topSOT$-continuous.

Note also that $\phi$ and $p$
and hence $U$ and $P$ are periodic,
\viz
        $U(t) = U(\fractional{t})$
    and $P(t) = P(\fractional{t})$
for all $t\in\reals$.
In particular $P(n) = P(0) = M_{\einser_{\Torus}} = \onematrix$ for $n\in\integers$.
The operator-valued functions $U$ and $P$ also enjoy the following property:

\begin{prop}[Bhat--Skeide commutation relation]
\makelabel{prop:bscr:sig:article-interpolation-raj-dahya}
    It holds that

        \begin{restoremargins}
        \begin{equation}
        \label{eq:bscr:sig:article-interpolation-raj-dahya}
            P(s)U(t)
            = U(t)
            \cdot \begin{cases}
                    \onematrix - (P(t) - P(s+t))
                        &\colon &\fractional{s} + \fractional{t} < 1\\
                    P(s+t) - P(t)
                        &\colon &\fractional{s} + \fractional{t} \geq 1\\
                \end{cases}
        \tag{BSCR}
        \end{equation}
        \end{restoremargins}

    \continueparagraph
    for $s,t\in\reals$.
    We shall refer to this as the \highlightTerm{Bhat--Skeide commutation relation}
    (or simply: \highlightTerm{BSCR}).
\end{prop}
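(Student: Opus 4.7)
The plan is to reduce the operator identity to a pointwise identity of bounded measurable functions on $\Torus$, which can then be verified by a direct case analysis. The first step is to exploit that $U(t)$ is the Koopman operator of the measure-preserving bijection $\phi_{t}$: a short computation using $\phi_{-t} \circ \phi_{t} = \id$ yields the intertwining
\begin{displaymath}
    M_{g}\,U(t) = U(t)\,M_{g \circ \phi_{-t}}
\end{displaymath}
for every $g \in L^{\infty}(\Torus)$. Applied to $g = p_{s}$, this gives $P(s)U(t) = U(t)\,M_{p_{s} \circ \phi_{-t}}$, so the BSCR becomes equivalent to the pointwise identity $p_{s} \circ \phi_{-t} = \einser_{\Torus} - (p_{t} - p_{s+t})$ when $\fractional{s}+\fractional{t} < 1$, and $p_{s} \circ \phi_{-t} = p_{s+t} - p_{t}$ when $\fractional{s}+\fractional{t} \geq 1$.

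Next I would evaluate both sides pointwise. Writing a generic element of $\Torus$ as $z = e^{\iunit 2\pi u}$ with $u \in [0,1)$, one has $\phi_{-t}(z) = e^{\iunit 2\pi (u+\fractional{t})}$, and hence the LHS equals $\einser_{[0,\,1-\fractional{s})}(\fractional{u+\fractional{t}})$. I would then split $[0,1)$ into the two subintervals $[0,\,1-\fractional{t})$, on which $\fractional{u+\fractional{t}} = u+\fractional{t}$, and $[1-\fractional{t},\,1)$, on which $\fractional{u+\fractional{t}} = u+\fractional{t}-1$. On each subinterval the condition $\fractional{u+\fractional{t}} < 1-\fractional{s}$ reduces to a simple linear inequality in $u$ and identifies the LHS as the indicator of an explicit subset of $[0,1)$.

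The third step is to match this with the RHS using the arithmetic identities $\fractional{s+t} = \fractional{s}+\fractional{t}$ in Case~1 and $\fractional{s+t} = \fractional{s}+\fractional{t}-1$ in Case~2. Evaluating the simple functions $\einser_{\Torus} - p_{t} + p_{s+t}$ and $p_{s+t} - p_{t}$ on the same partition of $[0,1)$, one checks that in Case~1 both sides equal $\einser_{[0,\,1-\fractional{s+t}) \,\cup\, [1-\fractional{t},\,1)}$, and in Case~2 (where $1-\fractional{s+t} > 1-\fractional{t}$) both sides equal $\einser_{[1-\fractional{t},\,1-\fractional{s+t})}$, establishing the required identity.

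The only real obstacle is the bookkeeping: the case split for $\fractional{u+\fractional{t}}$ must be carried out consistently with the case split for $\fractional{s+t}$, and one must carefully keep track of the half-open interval endpoints so that the LHS and RHS indicators agree on sets of measure zero as well. Once the correct subintervals are identified, no tools beyond the measure-preservation of $\phi_{t}$ and the explicit form of $p_{\tau}$ are required, so the verification is essentially arithmetic.
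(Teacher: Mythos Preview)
Your proposal is correct and follows essentially the same approach as the paper: both reduce the operator identity to a pointwise computation on $\Torus$ via the Koopman intertwining $U(t)^{\ast}M_{g}U(t)=M_{g\circ\phi_{-t}}$, and then verify the resulting indicator-function identity by case analysis. The paper treats the integer cases $s\in\integers$ or $t\in\integers$ separately (where everything collapses to $\onematrix$) and for the remaining case defers to \cite[Lemma~2.4~(1)]{BhatSkeide2015PureCounterexamples} together with a figure depicting $U(t)^{\ast}P(s)U(t)f$, whereas you carry out the full arithmetic verification uniformly; but the underlying computation is the same.
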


    \begin{proof}
        If $s\in\integers$, then due to periodicity,
            $P(s) = P(0) = \onematrix$
            and
            $P(s + t) = P(t)$.
        Since
            $\fractional{s} + \fractional{t} = \fractional{t} < 1$,
        the right-hand expression is equal to
            $
                U(t)\cdot(\onematrix - (P(t) - P(s+t)))
                = U(t)\cdot(\onematrix - (P(t) - P(t)))
                = U(t)
                = P(s)U(t)
            $.
        If $t\in\integers$, then due to periodicity,
            $P(t) = P(0) = \onematrix$,
            $U(t) = U(0) = \onematrix$,
            and
            $P(s + t) = P(s)$.
        Since
            $\fractional{s} + \fractional{t} = \fractional{s} < 1$,
        the right-hand expression is equal to
            $
                U(t)\cdot(\onematrix - (P(t) - P(s+t)))
                = \onematrix\cdot(\onematrix - (\onematrix - P(s)))
                = P(s)
                = P(s)U(t)
            $.
        If $s,t \in \reals\without\integers$,
        then the proof of \eqcref{eq:bscr:sig:article-interpolation-raj-dahya}
        is essentially the same as in \cite[Lemma~2.4~(1)]{BhatSkeide2015PureCounterexamples}.
        (The visualisation in \Cref{fig:bhat-skeide:sig:article-interpolation-raj-dahya}
        of the effect of $U(t)^{\ast}P(s)U(t)$
        on elements of $L^{2}(\Torus)$
        also depicts this computation.)
    \end{proof}


\begin{figure}[!ht]
\begin{mdframed}[%
    backgroundcolor=background_light_grey,%
    linewidth=0pt,%
]
    \centering
    \begin{subfigure}[b]{0.3\textwidth}
        \centering
        \begin{tikzpicture}[node distance=1cm,thick]
            \pgfmathsetmacro\hunit{0.5}
            \pgfmathsetmacro\vunit{0.5}
            \pgfmathsetmacro\hscale{6.283185307179586}
            \pgfmathsetmacro\vscale{1}

            \draw [draw=black, line width=0.5pt, fill=none]
                ({-0.2 * \hscale * \hunit}, {-2.25 * \vscale * \vunit})
                -- ({1.2 * \hscale * \hunit}, {-2.25 * \vscale * \vunit})
                -- ({1.2 * \hscale * \hunit}, {4.0 * \vscale * \vunit})
                -- ({-0.2 * \hscale * \hunit}, {4.0 * \vscale * \vunit})
                -- cycle;

            \draw [draw=none, fill=none]
                ({0 * \hscale * \hunit}, {4.0 * \vscale * \vunit})
                node[label={[below right, xshift=0pt, yshift=-2pt] \scriptsize $f(e^{\iunit \theta})$ vs. $\theta$}]{};

            \draw[->, draw=black, line width=0.5pt, fill=black]
                ({-0.1 * \hscale * \hunit}, {0 * \vscale * \vunit})
                -- ({1.1 * \hscale * \hunit}, {0 * \vscale * \vunit})
                node[label={[below, xshift=0pt, yshift=-2pt] \scriptsize $\theta$}]{};

            \draw [draw=black, line width=1.5pt, fill=none]
                ({0.0 * \hscale * \hunit}, {-0.75 * \vunit * \vscale})
                node[label={[below, xshift=0pt, yshift=-2pt]\scriptsize \rotatebox[origin=c]{-90}{$0$}}]{}
                -- ({0.0 * \hscale * \hunit}, {-0.25 * \vscale * \vunit});
            \draw [draw=black, line width=1.5pt, fill=none]
                ({1.0 * \hscale * \hunit}, {-0.75 * \vunit * \vscale})
                node[label={[below, xshift=0pt, yshift=-2pt]\scriptsize \rotatebox[origin=c]{-90}{$2\pi$}}]{}
                -- ({1.0 * \hscale * \hunit}, {-0.25 * \vscale * \vunit});

            \draw[draw=none, line width=0pt, fill=black] ({0.0 * \hscale * \hunit}, {2.1809444573256793 * \vscale * \vunit}) circle (2pt);

            \draw[draw=black, line width=1pt, fill=none]
                ({0.0 * \hscale * \hunit}, {2.1809444573256793 * \vscale * \vunit})
                -- ({0.04 * \hscale * \hunit}, {2.7593654556135236 * \vscale * \vunit})
                -- ({0.08 * \hscale * \hunit}, {2.0930940465535786 * \vscale * \vunit})
                -- ({0.12 * \hscale * \hunit}, {2.1536968350648995 * \vscale * \vunit})
                -- ({0.16 * \hscale * \hunit}, {2.0310870311347475 * \vscale * \vunit})
                -- ({0.2 * \hscale * \hunit}, {1.8776400281344068 * \vscale * \vunit})
                -- ({0.24 * \hscale * \hunit}, {1.7657712943879427 * \vscale * \vunit})
                -- ({0.28 * \hscale * \hunit}, {2.2568778364637416 * \vscale * \vunit})
                -- ({0.32 * \hscale * \hunit}, {2.012366485978751 * \vscale * \vunit})
                -- ({0.36 * \hscale * \hunit}, {1.5681222791371292 * \vscale * \vunit})
                -- ({0.4 * \hscale * \hunit}, {1.96944178517839 * \vscale * \vunit})
                -- ({0.44 * \hscale * \hunit}, {1.5510712933545452 * \vscale * \vunit})
                -- ({0.48 * \hscale * \hunit}, {2.0885407526360256 * \vscale * \vunit})
                -- ({0.52 * \hscale * \hunit}, {2.2113303695203994 * \vscale * \vunit})
                -- ({0.56 * \hscale * \hunit}, {1.9011959811801344 * \vscale * \vunit})
                -- ({0.6 * \hscale * \hunit}, {0.9636135216665631 * \vscale * \vunit})
                -- ({0.64 * \hscale * \hunit}, {2.397522383621427 * \vscale * \vunit})
                -- ({0.68 * \hscale * \hunit}, {1.1233020661612172 * \vscale * \vunit})
                -- ({0.72 * \hscale * \hunit}, {1.2940431182244299 * \vscale * \vunit})
                -- ({0.76 * \hscale * \hunit}, {2.144293658942074 * \vscale * \vunit})
                -- ({0.8 * \hscale * \hunit}, {2.753043391732871 * \vscale * \vunit})
                -- ({0.84 * \hscale * \hunit}, {2.454921971614578 * \vscale * \vunit})
                -- ({0.88 * \hscale * \hunit}, {2.175109235559883 * \vscale * \vunit})
                -- ({0.92 * \hscale * \hunit}, {2.2572062167331226 * \vscale * \vunit})
                -- ({0.96 * \hscale * \hunit}, {2.1613703578538948 * \vscale * \vunit})
                -- ({1.0 * \hscale * \hunit}, {2.8367977269110867 * \vscale * \vunit});

            \draw[draw=black, line width=1pt, fill=none] ({1.0 * \hscale * \hunit}, {2.8367977269110867 * \vscale * \vunit}) circle (2pt);
        \end{tikzpicture}
        \caption{Arbitrary $f \in L^{2}(\Torus)$}
        \label{fig:bhat-skeide:func:sig:article-interpolation-raj-dahya}
    \end{subfigure}

    \null

    \begin{subfigure}[b]{0.3\textwidth}
        \centering
        \begin{tikzpicture}[node distance=1cm,thick]
            \pgfmathsetmacro\hunit{0.5}
            \pgfmathsetmacro\vunit{0.5}
            \pgfmathsetmacro\hscale{6.283185307179586}
            \pgfmathsetmacro\vscale{1}

            \draw [draw=black, line width=0.5pt, fill=none]
                ({-0.2 * \hscale * \hunit}, {-2.25 * \vscale * \vunit})
                -- ({1.2 * \hscale * \hunit}, {-2.25 * \vscale * \vunit})
                -- ({1.2 * \hscale * \hunit}, {4.0 * \vscale * \vunit})
                -- ({-0.2 * \hscale * \hunit}, {4.0 * \vscale * \vunit})
                -- cycle;

            \draw [draw=none, fill=none]
                ({0 * \hscale * \hunit}, {4.0 * \vscale * \vunit})
                node[label={[below right, xshift=0pt, yshift=-2pt] \scriptsize $U(t)^{\ast}P(s)U(t)f\:(e^{\iunit \theta})$ vs. $\theta$}]{};

            \draw[->, draw=black, line width=0.5pt, fill=black]
                ({-0.1 * \hscale * \hunit}, {0 * \vscale * \vunit})
                -- ({1.1 * \hscale * \hunit}, {0 * \vscale * \vunit})
                node[label={[below, xshift=0pt, yshift=-2pt] \scriptsize $\theta$}]{};

            \draw [draw=black, line width=1.5pt, fill=none]
                ({0.4 * \hscale * \hunit}, {-0.75 * \vunit * \vscale})
                node[label={[below, xshift=0pt, yshift=-2pt]\scriptsize \rotatebox[origin=c]{-90}{$2\pi t_{1}$}}]{}
                -- ({0.4 * \hscale * \hunit}, {-0.25 * \vscale * \vunit});
            \draw [draw=black, line width=1.5pt, fill=none]
                ({0.08000000000000007 * \hscale * \hunit}, {-0.75 * \vunit * \vscale})
                node[label={[below, xshift=0pt, yshift=-2pt]\scriptsize \rotatebox[origin=c]{-90}{$2\pi t_{2}$}}]{}
                -- ({0.08000000000000007 * \hscale * \hunit}, {-0.25 * \vscale * \vunit});
            \draw [draw=black, line width=1.5pt, fill=none]
                ({0.0 * \hscale * \hunit}, {-0.75 * \vunit * \vscale})
                node[label={[below, xshift=0pt, yshift=-2pt]\scriptsize \rotatebox[origin=c]{-90}{$0$}}]{}
                -- ({0.0 * \hscale * \hunit}, {-0.25 * \vscale * \vunit});
            \draw [draw=black, line width=1.5pt, fill=none]
                ({1.0 * \hscale * \hunit}, {-0.75 * \vunit * \vscale})
                node[label={[below, xshift=0pt, yshift=-2pt]\scriptsize \rotatebox[origin=c]{-90}{$2\pi$}}]{}
                -- ({1.0 * \hscale * \hunit}, {-0.25 * \vscale * \vunit});

            \draw[draw=none, line width=0pt, fill=black] ({0.0 * \hscale * \hunit}, {2.1809444573256793 * \vscale * \vunit}) circle (2pt);

            \draw[draw=black, line width=1pt, fill=none]
                ({0.0 * \hscale * \hunit}, {2.1809444573256793 * \vscale * \vunit})
                -- ({0.040000000000000036 * \hscale * \hunit}, {2.7593654556135236 * \vscale * \vunit})
                -- ({0.07999999999999996 * \hscale * \hunit}, {2.0930940465535786 * \vscale * \vunit});

            \draw[draw=black, line width=1pt, fill=none] ({0.07999999999999996 * \hscale * \hunit}, {2.0930940465535786 * \vscale * \vunit}) circle (2pt);

            \draw[draw=black, line width=1pt, dotted]
                ({0.07999999999999996 * \hscale * \hunit}, {2.0930940465535786 * \vscale * \vunit})
                -- ({0.07999999999999996 * \hscale * \hunit}, {0.0 * \vscale * \vunit});

            \draw[draw=none, line width=0pt, fill=black] ({0.07999999999999996 * \hscale * \hunit}, {0.0 * \vscale * \vunit}) circle (2pt);

            \draw[draw=black, line width=1pt, fill=none]
                ({0.07999999999999996 * \hscale * \hunit}, {0.0 * \vscale * \vunit})
                -- ({0.12 * \hscale * \hunit}, {0.0 * \vscale * \vunit})
                -- ({0.16000000000000003 * \hscale * \hunit}, {0.0 * \vscale * \vunit})
                -- ({0.20000000000000007 * \hscale * \hunit}, {0.0 * \vscale * \vunit})
                -- ({0.24 * \hscale * \hunit}, {0.0 * \vscale * \vunit})
                -- ({0.28 * \hscale * \hunit}, {0.0 * \vscale * \vunit})
                -- ({0.31999999999999995 * \hscale * \hunit}, {0.0 * \vscale * \vunit})
                -- ({0.36 * \hscale * \hunit}, {0.0 * \vscale * \vunit})
                -- ({0.4 * \hscale * \hunit}, {0.0 * \vscale * \vunit});

            \draw[draw=black, line width=1pt, fill=none] ({0.4 * \hscale * \hunit}, {0.0 * \vscale * \vunit}) circle (2pt);

            \draw[draw=black, line width=1pt, dotted]
                ({0.4 * \hscale * \hunit}, {0.0 * \vscale * \vunit})
                -- ({0.4 * \hscale * \hunit}, {1.96944178517839 * \vscale * \vunit});

            \draw[draw=none, line width=0pt, fill=black] ({0.4 * \hscale * \hunit}, {1.96944178517839 * \vscale * \vunit}) circle (2pt);

            \draw[draw=black, line width=1pt, fill=none]
                ({0.4 * \hscale * \hunit}, {1.96944178517839 * \vscale * \vunit})
                -- ({0.44 * \hscale * \hunit}, {1.5510712933545452 * \vscale * \vunit})
                -- ({0.48 * \hscale * \hunit}, {2.0885407526360256 * \vscale * \vunit})
                -- ({0.52 * \hscale * \hunit}, {2.2113303695203994 * \vscale * \vunit})
                -- ({0.56 * \hscale * \hunit}, {1.9011959811801344 * \vscale * \vunit})
                -- ({0.6 * \hscale * \hunit}, {0.9636135216665631 * \vscale * \vunit})
                -- ({0.64 * \hscale * \hunit}, {2.397522383621427 * \vscale * \vunit})
                -- ({0.68 * \hscale * \hunit}, {1.1233020661612172 * \vscale * \vunit})
                -- ({0.72 * \hscale * \hunit}, {1.2940431182244299 * \vscale * \vunit})
                -- ({0.76 * \hscale * \hunit}, {2.144293658942074 * \vscale * \vunit})
                -- ({0.8 * \hscale * \hunit}, {2.753043391732871 * \vscale * \vunit})
                -- ({0.84 * \hscale * \hunit}, {2.454921971614578 * \vscale * \vunit})
                -- ({0.88 * \hscale * \hunit}, {2.175109235559883 * \vscale * \vunit})
                -- ({0.92 * \hscale * \hunit}, {2.2572062167331226 * \vscale * \vunit})
                -- ({0.96 * \hscale * \hunit}, {2.1613703578538948 * \vscale * \vunit})
                -- ({1.0 * \hscale * \hunit}, {2.8367977269110867 * \vscale * \vunit});

            \draw[draw=black, line width=1pt, fill=none] ({1.0 * \hscale * \hunit}, {2.8367977269110867 * \vscale * \vunit}) circle (2pt);
        \end{tikzpicture}
        \caption{$\fractional{s} + \fractional{t} < 1$}
        \label{fig:bhat-skeide:less:sig:article-interpolation-raj-dahya}
    \end{subfigure}
    \begin{subfigure}[b]{0.3\textwidth}
        \centering
        \begin{tikzpicture}[node distance=1cm,thick]
            \pgfmathsetmacro\hunit{0.5}
            \pgfmathsetmacro\vunit{0.5}
            \pgfmathsetmacro\hscale{6.283185307179586}
            \pgfmathsetmacro\vscale{1}

            \draw [draw=black, line width=0.5pt, fill=none]
                ({-0.2 * \hscale * \hunit}, {-2.25 * \vscale * \vunit})
                -- ({1.2 * \hscale * \hunit}, {-2.25 * \vscale * \vunit})
                -- ({1.2 * \hscale * \hunit}, {4.0 * \vscale * \vunit})
                -- ({-0.2 * \hscale * \hunit}, {4.0 * \vscale * \vunit})
                -- cycle;

            \draw [draw=none, fill=none]
                ({0 * \hscale * \hunit}, {4.0 * \vscale * \vunit})
                node[label={[below right, xshift=0pt, yshift=-2pt] \scriptsize $U(t)^{\ast}P(s)U(t)f\:(e^{\iunit \theta})$ vs. $\theta$}]{};

            \draw[->, draw=black, line width=0.5pt, fill=black]
                ({-0.1 * \hscale * \hunit}, {0 * \vscale * \vunit})
                -- ({1.1 * \hscale * \hunit}, {0 * \vscale * \vunit})
                node[label={[below, xshift=0pt, yshift=-2pt] \scriptsize $\theta$}]{};

            \draw [draw=black, line width=1.5pt, fill=none]
                ({0.32000000000000006 * \hscale * \hunit}, {-0.75 * \vunit * \vscale})
                node[label={[below, xshift=0pt, yshift=-2pt]\scriptsize \rotatebox[origin=c]{-90}{$2\pi t_{1}$}}]{}
                -- ({0.32000000000000006 * \hscale * \hunit}, {-0.25 * \vscale * \vunit});
            \draw [draw=black, line width=1.5pt, fill=none]
                ({1.0 * \hscale * \hunit}, {-0.75 * \vunit * \vscale})
                node[label={[below, xshift=0pt, yshift=-2pt]\scriptsize \rotatebox[origin=c]{-90}{$2\pi$}}]{}
                -- ({1.0 * \hscale * \hunit}, {-0.25 * \vscale * \vunit});
            \draw [draw=black, line width=1.5pt, fill=none]
                ({0.0 * \hscale * \hunit}, {-0.75 * \vunit * \vscale})
                node[label={[below, xshift=0pt, yshift=-2pt]\scriptsize \rotatebox[origin=c]{-90}{$0$}}]{}
                -- ({0.0 * \hscale * \hunit}, {-0.25 * \vscale * \vunit});

            \draw[draw=none, line width=0pt, fill=black] ({0.0 * \hscale * \hunit}, {0.0 * \vscale * \vunit}) circle (2pt);

            \draw[draw=black, line width=1pt, fill=none]
                ({0.0 * \hscale * \hunit}, {0.0 * \vscale * \vunit})
                -- ({0.040000000000000036 * \hscale * \hunit}, {0.0 * \vscale * \vunit})
                -- ({0.07999999999999996 * \hscale * \hunit}, {0.0 * \vscale * \vunit})
                -- ({0.12 * \hscale * \hunit}, {0.0 * \vscale * \vunit})
                -- ({0.16000000000000003 * \hscale * \hunit}, {0.0 * \vscale * \vunit})
                -- ({0.19999999999999996 * \hscale * \hunit}, {0.0 * \vscale * \vunit})
                -- ({0.24 * \hscale * \hunit}, {0.0 * \vscale * \vunit})
                -- ({0.28 * \hscale * \hunit}, {0.0 * \vscale * \vunit})
                -- ({0.32000000000000006 * \hscale * \hunit}, {0.0 * \vscale * \vunit});

            \draw[draw=black, line width=1pt, fill=none] ({0.32000000000000006 * \hscale * \hunit}, {0.0 * \vscale * \vunit}) circle (2pt);

            \draw[draw=black, line width=1pt, dotted]
                ({0.32000000000000006 * \hscale * \hunit}, {0.0 * \vscale * \vunit})
                -- ({0.32000000000000006 * \hscale * \hunit}, {2.012366485978751 * \vscale * \vunit});

            \draw[draw=none, line width=0pt, fill=black] ({0.32000000000000006 * \hscale * \hunit}, {2.012366485978751 * \vscale * \vunit}) circle (2pt);

            \draw[draw=black, line width=1pt, fill=none]
                ({0.32000000000000006 * \hscale * \hunit}, {2.012366485978751 * \vscale * \vunit})
                -- ({0.36 * \hscale * \hunit}, {1.5681222791371292 * \vscale * \vunit})
                -- ({0.4 * \hscale * \hunit}, {1.96944178517839 * \vscale * \vunit})
                -- ({0.44 * \hscale * \hunit}, {1.5510712933545452 * \vscale * \vunit})
                -- ({0.48 * \hscale * \hunit}, {2.0885407526360256 * \vscale * \vunit})
                -- ({0.52 * \hscale * \hunit}, {2.2113303695203994 * \vscale * \vunit})
                -- ({0.56 * \hscale * \hunit}, {1.9011959811801344 * \vscale * \vunit})
                -- ({0.6 * \hscale * \hunit}, {0.9636135216665631 * \vscale * \vunit})
                -- ({0.64 * \hscale * \hunit}, {2.397522383621427 * \vscale * \vunit})
                -- ({0.68 * \hscale * \hunit}, {1.1233020661612172 * \vscale * \vunit})
                -- ({0.72 * \hscale * \hunit}, {1.2940431182244299 * \vscale * \vunit})
                -- ({0.76 * \hscale * \hunit}, {2.144293658942074 * \vscale * \vunit})
                -- ({0.8 * \hscale * \hunit}, {2.753043391732871 * \vscale * \vunit})
                -- ({0.84 * \hscale * \hunit}, {2.454921971614578 * \vscale * \vunit})
                -- ({0.88 * \hscale * \hunit}, {2.175109235559883 * \vscale * \vunit})
                -- ({0.92 * \hscale * \hunit}, {2.2572062167331226 * \vscale * \vunit})
                -- ({0.96 * \hscale * \hunit}, {2.1613703578538948 * \vscale * \vunit})
                -- ({1.0 * \hscale * \hunit}, {2.8367977269110867 * \vscale * \vunit});

            \draw[draw=black, line width=1pt, fill=none] ({1.0 * \hscale * \hunit}, {2.8367977269110867 * \vscale * \vunit}) circle (2pt);
        \end{tikzpicture}
        \caption{$\fractional{s} + \fractional{t} = 1$}
        \label{fig:bhat-skeide:equals:sig:article-interpolation-raj-dahya}
    \end{subfigure}
    \begin{subfigure}[b]{0.3\textwidth}
        \centering
        \begin{tikzpicture}[node distance=1cm,thick]
            \pgfmathsetmacro\hunit{0.5}
            \pgfmathsetmacro\vunit{0.5}
            \pgfmathsetmacro\hscale{6.283185307179586}
            \pgfmathsetmacro\vscale{1}

            \draw [draw=black, line width=0.5pt, fill=none]
                ({-0.2 * \hscale * \hunit}, {-2.25 * \vscale * \vunit})
                -- ({1.2 * \hscale * \hunit}, {-2.25 * \vscale * \vunit})
                -- ({1.2 * \hscale * \hunit}, {4.0 * \vscale * \vunit})
                -- ({-0.2 * \hscale * \hunit}, {4.0 * \vscale * \vunit})
                -- cycle;

            \draw [draw=none, fill=none]
                ({0 * \hscale * \hunit}, {4.0 * \vscale * \vunit})
                node[label={[below right, xshift=0pt, yshift=-2pt] \scriptsize $U(t)^{\ast}P(s)U(t)f\:(e^{\iunit \theta})$ vs. $\theta$}]{};

            \draw[->, draw=black, line width=0.5pt, fill=black]
                ({-0.1 * \hscale * \hunit}, {0 * \vscale * \vunit})
                -- ({1.1 * \hscale * \hunit}, {0 * \vscale * \vunit})
                node[label={[below, xshift=0pt, yshift=-2pt] \scriptsize $\theta$}]{};

            \draw [draw=black, line width=1.5pt, fill=none]
                ({0.16000000000000003 * \hscale * \hunit}, {-0.75 * \vunit * \vscale})
                node[label={[below, xshift=0pt, yshift=-2pt]\scriptsize \rotatebox[origin=c]{-90}{$2\pi t_{1}$}}]{}
                -- ({0.16000000000000003 * \hscale * \hunit}, {-0.25 * \vscale * \vunit});
            \draw [draw=black, line width=1.5pt, fill=none]
                ({0.8400000000000001 * \hscale * \hunit}, {-0.75 * \vunit * \vscale})
                node[label={[below, xshift=0pt, yshift=-2pt]\scriptsize \rotatebox[origin=c]{-90}{$2\pi t_{2}$}}]{}
                -- ({0.8400000000000001 * \hscale * \hunit}, {-0.25 * \vscale * \vunit});
            \draw [draw=black, line width=1.5pt, fill=none]
                ({0.0 * \hscale * \hunit}, {-0.75 * \vunit * \vscale})
                node[label={[below, xshift=0pt, yshift=-2pt]\scriptsize \rotatebox[origin=c]{-90}{$0$}}]{}
                -- ({0.0 * \hscale * \hunit}, {-0.25 * \vscale * \vunit});
            \draw [draw=black, line width=1.5pt, fill=none]
                ({1.0 * \hscale * \hunit}, {-0.75 * \vunit * \vscale})
                node[label={[below, xshift=0pt, yshift=-2pt]\scriptsize \rotatebox[origin=c]{-90}{$2\pi$}}]{}
                -- ({1.0 * \hscale * \hunit}, {-0.25 * \vscale * \vunit});

            \draw[draw=none, line width=0pt, fill=black] ({0.0 * \hscale * \hunit}, {0.0 * \vscale * \vunit}) circle (2pt);

            \draw[draw=black, line width=1pt, fill=none]
                ({0.0 * \hscale * \hunit}, {0.0 * \vscale * \vunit})
                -- ({0.040000000000000036 * \hscale * \hunit}, {0.0 * \vscale * \vunit})
                -- ({0.07999999999999996 * \hscale * \hunit}, {0.0 * \vscale * \vunit})
                -- ({0.12 * \hscale * \hunit}, {0.0 * \vscale * \vunit})
                -- ({0.16000000000000003 * \hscale * \hunit}, {0.0 * \vscale * \vunit});

            \draw[draw=black, line width=1pt, fill=none] ({0.16000000000000003 * \hscale * \hunit}, {0.0 * \vscale * \vunit}) circle (2pt);

            \draw[draw=black, line width=1pt, dotted]
                ({0.16000000000000003 * \hscale * \hunit}, {0.0 * \vscale * \vunit})
                -- ({0.16000000000000003 * \hscale * \hunit}, {2.0310870311347475 * \vscale * \vunit});

            \draw[draw=none, line width=0pt, fill=black] ({0.16000000000000003 * \hscale * \hunit}, {2.0310870311347475 * \vscale * \vunit}) circle (2pt);

            \draw[draw=black, line width=1pt, fill=none]
                ({0.16000000000000003 * \hscale * \hunit}, {2.0310870311347475 * \vscale * \vunit})
                -- ({0.2 * \hscale * \hunit}, {1.8776400281344068 * \vscale * \vunit})
                -- ({0.24 * \hscale * \hunit}, {1.7657712943879427 * \vscale * \vunit})
                -- ({0.28 * \hscale * \hunit}, {2.2568778364637416 * \vscale * \vunit})
                -- ({0.32 * \hscale * \hunit}, {2.012366485978751 * \vscale * \vunit})
                -- ({0.36 * \hscale * \hunit}, {1.5681222791371292 * \vscale * \vunit})
                -- ({0.4 * \hscale * \hunit}, {1.96944178517839 * \vscale * \vunit})
                -- ({0.44 * \hscale * \hunit}, {1.5510712933545452 * \vscale * \vunit})
                -- ({0.48 * \hscale * \hunit}, {2.0885407526360256 * \vscale * \vunit})
                -- ({0.52 * \hscale * \hunit}, {2.2113303695203994 * \vscale * \vunit})
                -- ({0.56 * \hscale * \hunit}, {1.9011959811801344 * \vscale * \vunit})
                -- ({0.6 * \hscale * \hunit}, {0.9636135216665631 * \vscale * \vunit})
                -- ({0.64 * \hscale * \hunit}, {2.397522383621427 * \vscale * \vunit})
                -- ({0.68 * \hscale * \hunit}, {1.1233020661612172 * \vscale * \vunit})
                -- ({0.72 * \hscale * \hunit}, {1.2940431182244299 * \vscale * \vunit})
                -- ({0.76 * \hscale * \hunit}, {2.144293658942074 * \vscale * \vunit})
                -- ({0.8 * \hscale * \hunit}, {2.753043391732871 * \vscale * \vunit})
                -- ({0.84 * \hscale * \hunit}, {2.454921971614578 * \vscale * \vunit});

            \draw[draw=black, line width=1pt, fill=none] ({0.84 * \hscale * \hunit}, {2.454921971614578 * \vscale * \vunit}) circle (2pt);

            \draw[draw=black, line width=1pt, dotted]
                ({0.84 * \hscale * \hunit}, {2.454921971614578 * \vscale * \vunit})
                -- ({0.84 * \hscale * \hunit}, {0.0 * \vscale * \vunit});

            \draw[draw=none, line width=0pt, fill=black] ({0.84 * \hscale * \hunit}, {0.0 * \vscale * \vunit}) circle (2pt);

            \draw[draw=black, line width=1pt, fill=none]
                ({0.84 * \hscale * \hunit}, {0.0 * \vscale * \vunit})
                -- ({0.88 * \hscale * \hunit}, {0.0 * \vscale * \vunit})
                -- ({0.92 * \hscale * \hunit}, {0.0 * \vscale * \vunit})
                -- ({0.96 * \hscale * \hunit}, {0.0 * \vscale * \vunit})
                -- ({1.0 * \hscale * \hunit}, {0.0 * \vscale * \vunit});

            \draw[draw=black, line width=1pt, fill=none] ({1.0 * \hscale * \hunit}, {0.0 * \vscale * \vunit}) circle (2pt);
        \end{tikzpicture}
        \caption{$\fractional{s} + \fractional{t} > 1$}
        \label{fig:bhat-skeide:greater:sig:article-interpolation-raj-dahya}
    \end{subfigure}

    \caption{%
        Effect of
            $U(t)^{\ast}P(s)U(t)$
        on $L^{2}(\Torus)$
        for $s,t\in\reals \without \integers$.\\
        Here $t_{1} = 1 - \fractional{t}$
        and $t_{2} = 1 - \fractional{s + t}$.
    }
    \label{fig:bhat-skeide:sig:article-interpolation-raj-dahya}
\end{mdframed}
\end{figure}


\begin{lemm}[Bhat--Skeide, 2015]
\makelabel{lemm:bhat-skeide:single:sig:article-interpolation-raj-dahya}
    Let $S\in\BoundedOps{\HilbertRaum}$
    be a bounded operator (\respectively contraction)
    on a Hilbert space $\HilbertRaum$.
    Then
        ${T \colon \realsNonNeg \to \BoundedOps{L^{2}(\Torus) \otimes \HilbertRaum}}$
    given by

        \begin{restoremargins}
        \begin{equation}
        \label{eq:bs-dilation-single:sig:article-interpolation-raj-dahya}
            T(t) \coloneqq (U(t) \otimes \onematrix)
                \Big(
                    P(t) \otimes S^{\floor{t}}
                    + (\onematrix - P(t)) \otimes S^{\floor{t} + 1}
                \Big)
        \end{equation}
        \end{restoremargins}

    \continueparagraph
    for $t\in\realsNonNeg$,
    is a $\Cnought$-semigroup
    (\respectively contractive $\Cnought$-semigroup)
    on $L^{2}(\Torus) \otimes \HilbertRaum$
    satisfying $T(n) = \onematrix \otimes S^{n}$
    for $n\in\naturalsZero$.
\end{lemm}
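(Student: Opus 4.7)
The boundary identity $T(n) = \onematrix \otimes S^{n}$ for $n \in \naturalsZero$ is immediate from the definition: by periodicity $U(n) = U(0) = \onematrix$ and $P(n) = P(0) = \onematrix$, so with $\floor{n} = n$ the second summand in the bracket of \eqcref{eq:bs-dilation-single:sig:article-interpolation-raj-dahya} vanishes. In particular $T(0) = \onematrix$. For contractivity when $S \in \OpSpaceC{\HilbertRaum}$, I would factor out the unitary $U(t) \otimes \onematrix$ and verify that the inner operator
\[
    A(t) \coloneqq P(t) \otimes S^{\floor{t}} + (\onematrix - P(t)) \otimes S^{\floor{t}+1}
\]
is contractive. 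Under the identification $L^{2}(\Torus) \otimes \HilbertRaum \cong L^{2}(\Torus, \HilbertRaum)$ the fact that $p_{t}$ is $\{0,1\}$-valued makes $P(t)$ and $\onematrix - P(t)$ complementary orthogonal projections, so $\norm{A(t)\xi}^{2}$ decomposes fibre-wise into a sum of terms of the form $\norm{S^{k}\xi(z)}^{2}$ with $k \in \{\floor{t}, \floor{t}+1\}$, and is therefore bounded by $\norm{\xi}^{2}$.

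The core of the argument is the semigroup identity $T(s+t) = T(s)T(t)$ for $s,t \in \realsNonNeg$, and this is where the Bhat--Skeide commutation relation comes into play. Writing
\[
    T(s)T(t) = (U(s) \otimes \onematrix)\cdot A(s)\cdot (U(t) \otimes \onematrix)\cdot A(t),
\]
one pushes $U(t) \otimes \onematrix$ leftward through $A(s)$ by applying \eqcref{eq:bscr:sig:article-interpolation-raj-dahya} to the two summands $P(s)U(t)$ and $(\onematrix - P(s))U(t)$. This forces a case split: if $\fractional{s} + \fractional{t} < 1$ then $P(s+t) \leq P(t)$, whereas if $\fractional{s} + \fractional{t} \geq 1$ then $P(t) \leq P(s+t)$. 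In each case the resulting factor sitting to the left of $A(t)$ is a block expression in the four projections $P(t), \onematrix - P(t), P(s+t), \onematrix - P(s+t)$ tensored with $S^{\floor{s}}$ or $S^{\floor{s}+1}$. Multiplying against $A(t)$ and using the relevant comparability of $P(t)$ and $P(s+t)$ kills two of the four cross-terms, and the surviving pair combines cleanly once one invokes the identity $\floor{s+t} = \floor{s}+\floor{t}$ in case one and $\floor{s+t} = \floor{s}+\floor{t}+1$ in case two. In both cases the product collapses to $A(s+t)$. Combining with $U(s)U(t) = U(s+t)$ yields $T(s+t) = (U(s+t) \otimes \onematrix) \cdot A(s+t) = T(s+t)$.

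Strong continuity of $T$ then reduces, via the semigroup law together with the uniform norm bound inherited from contractivity, to right-continuity at $0$. For $t \in (0,1)$ we have $\floor{t} = 0$ and $\fractional{t} = t$, so $T(t) = (U(t) \otimes \onematrix)(P(t) \otimes \onematrix + (\onematrix - P(t)) \otimes S)$. As $t \downarrow 0$ both $U(t) \to \onematrix$ and $P(t) \to \onematrix$ strongly (these were established in the discussion preceding \Cref{prop:bscr:sig:article-interpolation-raj-dahya}), which gives $T(t) \to \onematrix$ in the $\topSOT$-topology. The principal obstacle in the whole proof is the semigroup identity: the tensor-product algebra is mechanical once the case analysis and the two projection inequalities have been identified, but one has to keep track of four bilinear terms in each case and align the powers of $S$ with the jumps of $t \mapsto \floor{t}$, and it is really the interplay of \eqcref{eq:bscr:sig:article-interpolation-raj-dahya} with those ordered projections that is doing all of the work.
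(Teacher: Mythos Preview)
Your proposal is correct and follows essentially the same route as the paper's proof: the interpolation identity, the contractivity via orthogonality of $P(t)$ and $\onematrix-P(t)$, the semigroup law via \eqcref{eq:bscr:sig:article-interpolation-raj-dahya} together with the case split on $\fractional{s}+\fractional{t}$, and the reduction of strong continuity to right-continuity at $0$ are exactly what the paper does (with the algebraic verification of the semigroup law deferred there to the original Bhat--Skeide reference). One small remark: your phrase ``uniform norm bound inherited from contractivity'' only covers the contractive case; for general bounded $S$ the paper first establishes the exponential bound $\norm{T(t)} \leq 2\max\{1,\norm{S}\}^{\floor{t}+1}$, which is what lets the semigroup law propagate right-continuity at $0$ to global $\topSOT$-continuity in that situation.
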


    \begin{proof}[Sketch]
        First observe that $T$ has bounded growth:
        Since $P$ is projection valued and $U$ is a unitary semigroup,
        one has
            $
                \norm{T(t)}
                    \leq \norm{P(t)}\norm{S}^{\floor{t}}
                        + \norm{\onematrix - P(t)}\norm{S}^{\floor{t} + 1}
                    \leq 2\max\{1,\norm{S}\}^{\floor{t} + 1}
                    \leq C e^{\omega t}
            $
        for all $t\in\realsNonNeg$,
        where $C = 2\max\{1,\norm{S}\}$
        und $\omega = \log(\max\{1,\norm{S}\})$.

        Secondly, observe that if $S$ is a contraction,
        then $T$ is a contraction-valued function:
        Letting $t \in \realsNonNeg$,
        since $P(t)$ a projection,
        it is clear that
            $P(t) \otimes \onematrix + (\onematrix - P(t)) \otimes S$
        is a contraction.
        And since $U(t)$ is unitary,
        it follows that
            $
                T(t) = (U(t) \otimes \onematrix)
                    (P(t) \otimes \onematrix + (\onematrix - P(t)) \otimes S)
                    (\onematrix \otimes S^{\floor{t}})
            $
        is contractive.
        We now demonstrate the remaining properties.

        \paragraph{Interpolation property:}
        For $n\in\naturalsZero$ we have
            $
                T(n)
                = (U(n) \otimes \onematrix)
                    (P(n) \otimes S^{\floor{n}} + (\onematrix - P(n)) \otimes S^{\floor{n} + 1})
                = (U(0) \otimes \onematrix)
                    (P(0) \otimes S^{n} + (\onematrix - P(0)) \otimes S^{n+1})
                = (\onematrix \otimes \onematrix)
                    (\onematrix \otimes S^{n} + (\onematrix - \onematrix) \otimes S^{n+1})
                = \onematrix \otimes S^{n}
            $.

        \paragraph{Semigroup law:}
        We have
            $
                T(0) = \onematrix \otimes S^{0}
                = \onematrix \otimes \onematrix
                = \onematrix
            $.
        Let $s,t\in\realsNonNeg$ be arbitrary.
        We show that $T(s)T(t) = T(s + t)$.
        By \Cref{prop:bscr:sig:article-interpolation-raj-dahya},
            $P(s)U(t) = U(t)Q(s,t)$,
        where
            $Q(s,t) = \onematrix - (P(t) - P(s+t))$ if $\fractional{s} + \fractional{t} < 1$
            and
            $Q(s,t) = P(s+t) - P(t)$ otherwise.
        This yields

            \begin{shorteqnarray}
                T(s)T(t)
                    &= &(U(s) \otimes \onematrix)
                        (P(s) \otimes S^{\floor{s}} + (\onematrix - P(s)) \otimes S^{\floor{s} + 1})\\
                    &&\cdot
                        (U(t) \otimes \onematrix)
                        (P(t) \otimes S^{\floor{t}} + (\onematrix - P(t)) \otimes S^{\floor{t} + 1})\\
                    &=
                        &(U(s) \otimes \onematrix)
                        (U(t) \otimes \onematrix)\\
                    &&\cdot
                        (
                            Q(s,t) \otimes S^{\floor{s}}
                            + (\onematrix - Q(s,t)) \otimes S^{\floor{s} + 1}
                        )\\
                    &&\cdot
                        (P(t) \otimes S^{\floor{t}} + (\onematrix - P(t)) \otimes S^{\floor{t} + 1}).\\
            \end{shorteqnarray}

        \continueparagraph
        Relying entirely on the relations between projections $P(t)$ and $P(s+t)$,
        the product of the final two expressions in parentheses
        simplifies to $P(s+t)\otimes S^{\floor{s+t}} + (\onematrix - P(s+t))\otimes S^{\floor{s+t} + 1}$,
        and thus the final expression is equal to $T(s+t)$.
        For details of this computation we refer the reader to the proof of \cite[Lemma~2.4~(1)]{BhatSkeide2015PureCounterexamples}.
        Note in particular that the properties of $S$ do not play any role in this simplification.

        \paragraph{Continuity of of $T$:}
        Since $T$ satisfies the semigroup law,
        it suffices to prove that $T$ is continuous in $0$.
        Since $U$ is $\topSOT$-continuous
        and $P$ is right-$\topSOT$-continuous,
        one has for $t \in (0,\:1)$ that
            $
                T(t)
                = U(t)P(t) \otimes S^{\floor{t}} + U(t)(\onematrix - P(t)) \otimes S^{\floor{t} + 1}
                = U(t)P(t) \otimes \onematrix + U(t)(\onematrix - P(t)) \otimes S
                \overset{\tinytopSOT}{\longrightarrow}
                U(0)P(0) \otimes \onematrix + U(0)(\onematrix - P(0)) \otimes S
                = \onematrix
                = T(0)
            $
        as ${t \longrightarrow 0^{+}}$.
        Hence, $T$ is a $\Cnought$-semigroup.
    \end{proof}

Before proceeding, we take note of some basic but interesting
properties of the Bhat--Skeide interpolation.
Let $S \in \BoundedOps{\HilbertRaum}$ be a bounded operator on a Hilbert space $\HilbertRaum$
and ${T \colon \realsNonNeg \to \BoundedOps{L^{2}(\Torus) \otimes \HilbertRaum}}$
its interpolation as defined in \Cref{lemm:bhat-skeide:single:sig:article-interpolation-raj-dahya}.
As in the proof of this result,
if $S$ is a contraction, then so is $T(t)$ for all $t\in\realsNonNeg$.
Conversely, if $T$ is contraction-valued,
then since
    $\onematrix \otimes S = T(1)$,
it follows that
    $
        1 \geq \norm{T(1)}
        = \norm{\onematrix \otimes S}
        = \norm{\onematrix}\norm{S}
    $,
whence $S$ is a contraction.
Whilst our focus in this paper is on
contractions and contractive $\Cnought$-semigroups,
it is worth noting that the interpolation preserves other operator-theoretic properties.
For example, if $S$ is an isometry,
then relying on the orthogonality of the projections
$\{P(t),\onematrix-P(t)\}$ for each $t\in\realsNonNeg$,
one obtains

    \begin{shorteqnarray}
        T(t)^{\ast}T(t)
            &=
                &\begin{displayarray}[t]{0l}
                    \Big(
                        P(t) \otimes (S^{\ast})^{\floor{t}}
                        + (\onematrix - P(t)) \otimes (S^{\ast})^{\floor{t} + 1}
                    \Big)
                    (U(t)^{\ast} \otimes \onematrix)\\
                    \cdot
                    (U(t) \otimes \onematrix)
                    \Big(
                        P(t) \otimes S^{\floor{t}}
                        + (\onematrix - P(t)) \otimes S^{\floor{t} + 1}
                    \Big)
                    \\
                \end{displayarray}\\
            &=
                &P(t) \otimes (S^{\ast})^{\floor{t}}S^{\floor{t}}
                + (\onematrix - P(t)) \otimes (S^{\ast})^{\floor{t} + 1}S^{\floor{t} + 1}\\
            &=
                &P(t) \otimes \onematrix
                + (\onematrix - P(t)) \otimes \onematrix
            =
                \onematrix,\\
    \end{shorteqnarray}

\continueparagraph
\idest $T(t)$ is an isometry for all $t\in\realsNonNeg$.
Conversely, if $T$ is isometry-valued,
then
    $
        \onematrix \otimes S^{\ast} S
        = (\onematrix \otimes S)^{\ast}(\onematrix \otimes S)
        = T(1)^{\ast}T(1)
        = \onematrix
    $,
from which it follows that $S$ is an isometry.
In an analogous manner one obtains that $T$ is unitary-valued
if and only if $S$ is a unitary.

The interpolation also preserves properties in the stochastic setting.
Considering $L^{p}$-spaces over probability spaces,
it has been established that the properties of operators (or their duals)
preserving the constant $\einser$-function
and \almostEverywhere-positive functions
provide operator-theoretic counterparts
to the stochastic notions of transition kernels.
For details on these connections,
see \exempli
    \cite[\S{}0.9, (9.6--7)]{Doob1990BookStochProc},
    \cite[\S{}2.3 and Theorem~2.1]{Dynkin1965BookMarkovVolI},
    \cite[\S{}1.1--1.2]{Vershik2006Article},
    \cite[Chapter~13]{EisnerNagel2015BookErgTh}.
Here we simply focus on the operator-theoretic properties
in and of themselves, in the $L^{2}$-setting.

So consider a Hilbert space
    $L^{2}(X, \mu)$
for some probability space $(X, \mu)$,
where $X$ is a locally compact Hausdorff space.
Let
    $L^{2}(X, \mu)^{+}$
denote the closed convex subspace of
    $f \in L^{2}(X, \mu)$
which are \almostEverywhere-positive,
\idest
    $f(x) \in \realsNonNeg$
for $\mu$-\almostEverywhere $x \in X$.
Say that an operator
    $S \in \BoundedOps{L^{2}(X, \mu)}$
preserves \almostEverywhere-positive functions
if $S L^{2}(X, \mu)^{+} \subseteq L^{2}(X, \mu)^{+}$.
And say that $S$ is unity-preserving
if $S \einser_{X} = \einser_{X}$.
Now, the tensor product
    $L^{2}(\Torus) \otimes L^{2}(X, \mu)$.
may be identified with
    $L^{2}(\Torus \times X)$
in the canonical way,%
\footnote{
    For
        $f \in L^{\infty}(\Torus)$
        and
        $g \in L^{\infty}(X,\mu)$
    let $f \odot g \in L^{\infty}(\Torus \times X)$
    be defined by
        ${(\theta,x) \mapsto f(\theta) g(x)}$.
    Considering the algebraic tensor product
    $\Cts{\Torus} \otimes \Cts{X}$,
    one can readily observe that the linear map
    ${
        \Cts{\Torus} \otimes \Cts_{b}{X}
        \ni \sum_{i} f_{i} \otimes g_{i}
        \mapsto
        \sum_{i} f_{i} \odot g_{i}
        \in \Cts_{b}{\Torus \times X}
    }$
    is a well-defined $L^{2}$-isometry.
    By Stone-Weierstra\ss, it also has an
    $L^{\infty}$- and thus $L^{2}$-dense image inside $\Cts_{b}{\Torus \times X}$.
    By density, it follows that this map
    extends uniquely to a unitary map between
    $L^{2}(\Torus) \otimes L^{2}(X, \mu)$
    and
    $L^{2}(\Torus \times X)$.
}
where
    $\Torus$ is endowed with the Haar measure
and
    $\Torus \times X$ is endowed with the product probability measure.
Under this identification one further has that
    $\einser_{\Torus \times X}$
is identical to the element
    $\einser_{\Torus} \otimes \einser_{X}$.

With this in mind, we now consider
a bounded operator $S \in \BoundedOps{L^{2}(X, \mu)}$
and its Bhat--Skeide interpolation
    ${T \colon \realsNonNeg \to \BoundedOps{L^{2}(\Torus) \otimes L^{2}(X, \mu)}}$.
If $S$ is unity-preserving, then

    \begin{shorteqnarray}
        T(t)\einser_{X \times \Torus}
        &=
            &(U(t) \otimes \onematrix)
            \Big(
                P(t) \otimes S^{\floor{t}}
                + (\onematrix - P(t)) \otimes S^{\floor{t} + 1}
            \Big)
            \:
            (\einser_{\Torus} \otimes \einser_{X})\\
        &=
            &(U(t) \otimes \onematrix)
            \Big(
                P(t)\einser_{\Torus} \otimes S^{\floor{t}}\einser_{X}
                + (\onematrix - P(t))\einser_{\Torus} \otimes S^{\floor{t} + 1}\einser_{X}
            \Big)\\
        &=
            &(U(t) \otimes \onematrix)
            \Big(
                P(t)\einser_{\Torus} \otimes \einser_{X}
                + (\onematrix - P(t))\einser_{\Torus} \otimes \einser_{X}
            \Big)\\
        &=
            &(U(t) \otimes \onematrix)
            \:(\einser_{\Torus} \otimes \einser_{X})\\
        &=
            &U(t)\einser_{\Torus} \otimes \einser_{X}\\
        &=
            &\einser_{\Torus} \otimes \einser_{X}
            ~\text{(since $U(t)$ is a Koopman-operator)}\\
        &= &\einser_{\Torus \times X},\\
    \end{shorteqnarray}

\continueparagraph
\idest $T(t)$ is unity-preserving for all $t\in\realsNonNeg$.
Conversely, if this holds, then
    $
        \einser_{\Torus} \otimes S\einser_{X}
        = (\onematrix \otimes S)\:(\einser_{\Torus} \otimes \einser_{X})
        = T(1)\:\einser_{\Torus \times X}
        = \einser_{\Torus \times X}
        = \einser_{\Torus} \otimes \einser_{X}
    $,
from which one can derive that
$S\einser_{X} = \einser_{X}$.
Similarly, $S^{\ast}$ is unity-preserving
if and only if $T(t)^{\ast}$ is unity-preserving for all $t\in\realsNonNeg$.

Suppose now that $S$ preserves \almostEverywhere-positive functions.
For $t \in \realsNonNeg$,
since $U(t)$ is a Koopman-operator
and $P(t)$ a projection and multiplication operator,
one obtains

    \begin{shorteqnarray}
        \brkt{
            T(t)^{\ast}\:(f \otimes g)
        }{
            \tilde{f} \otimes \tilde{g}
        }
        &= &\brkt{
                f \otimes g
            }{
                T(t)\:(\tilde{f} \otimes \tilde{g})
            }\\
        &=
            &\begin{displayarray}[t]{0l}
                \brkt{f}{U(t)P(t)\tilde{f}}
                \brkt{g}{S^{\floor{t}}\tilde{g}}\\
                +\:\brkt{f}{U(t)(\onematrix - P(t))\tilde{f}}
                \brkt{g}{S^{\floor{t} + 1}\tilde{g}}\\
            \end{displayarray}\\
        &\geq &0\\
    \end{shorteqnarray}

\continueparagraph
for all
    $g,\tilde{g} \in L^{2}(\Torus)^{+}$
    and
    $f,\tilde{f} \in L^{2}(X, \mu)^{+}$,
which implies that
    $T(t)^{\ast}\:(f \otimes g) \geq 0$ \almostEverywhere.
From this it follows that
    $
        \brkt{T(t)F}{f \otimes g}
        = \brkt{F}{T(t)^{\ast}\:(f \otimes g)}
        \geq 0
    $
for all
    $F \in L^{2}(\Torus \times X)^{+}$,
    $f \in L^{2}(\Torus)^{+}$,
    and
    $g \in L^{2}(X, \mu)^{+}$,
which in turn implies that
    $T(t)F \geq 0$ \almostEverywhere.
It thus follows that $T(t)$ preserves \almostEverywhere-positive functions
for all $t\in\realsNonNeg$.
Conversely, if this holds, then
    $
        \brkt{Sf}{\tilde{f}}
        = \brkt{
            (\onematrix \otimes S)
            \:(\einser_{\Torus} \otimes f)
        }{
            \einser_{\Torus} \otimes \tilde{f}
        }
        = \brkt{
            T(1)\:(\einser_{\Torus} \otimes f)
        }{
            \einser_{\Torus} \otimes \tilde{f}
        }
        \geq 0
    $
for \almostEverywhere-positive functions
$f,\tilde{f} \in L^{2}(X, \mu)$,
from which it follows that $S$ preserves \almostEverywhere-positive functions.
Similarly,
    $S^{\ast}$ preserves \almostEverywhere-positive functions
    if and only if
    $T(t)^{\ast}$ preserves \almostEverywhere-positive functions
for all $t\in\realsNonNeg$.

We thus see that the Bhat--Skeide interpolation
preserves operator-theoretic properties relevant
in the characterisation Markov and bi-Markov processes.
In particular, by applying
    \cite[Theorem 13.2 and Proposition~13.6]{EisnerNagel2015BookErgTh}
one has that $S \in \BoundedOps{L^{2}(X,\mu)}$
is a \usesinglequotes{bi-Markov} operator%
\footref{ft:prose:bimarkov:sig:article-interpolation-raj-dahya}
if and only if $T(t)$ is for all $t\in\realsNonNeg$.

\footnotetext[ft:prose:bimarkov:sig:article-interpolation-raj-dahya]{%
    we refer the interested reader to the start of \cite[Chapter~13]{EisnerNagel2015BookErgTh}
    for the definition of bi-Markov operators.
}



\subsection[Interpolation for families of contractions]{Interpolation for families of contractions}
\label{sec:interpolation:family:sig:article-interpolation-raj-dahya}

\firstparagraph
Now consider a non-empty index set $\mathcal{I}$
and a commuting family
    $\{S_{i}\}_{i \in \mathcal{I}}$
of contractions on a Hilbert space $\HilbertRaum$.
The Bhat--Skeide interpolation can be extended
to arbitrary families by defining
the interpolants in such a way,
that the auxiliary parts act on independent co-ordinates.

\def\beweislabel{lemm:bhat-skeide:multi:sig:article-interpolation-raj-dahya}
\begin{proof}[of \Cref{\beweislabel}]
    Consider the probability space
        $Z \coloneqq \prod_{i \in \mathcal{I}}\Torus$.
    For each $i\in \mathcal{I}$ and $t\in\reals$ define
        ${\phi^{(i)}_{t} \colon Z \to Z}$
        and
        ${p^{(i)}_{t} \colon Z \to \reals}$
    via

        \begin{shorteqnarray}
            \phi^{(i)}_{t}(\mathbf{z})
                &\coloneqq &\Big(
                        \begin{cases}
                            \phi_{t}(z_{i}) &\colon &j = i\\
                            z_{j} &\colon &j \neq i\\
                        \end{cases}
                    \Big)_{j\in\mathcal{I}}\\
            p^{(i)}_{t}(\mathbf{z})
                &\coloneqq &p_{t}(z_{i})\\
        \end{shorteqnarray}

    \continueparagraph
    for $\mathbf{z} = (z_{j})_{j\in\mathcal{I}} \in Z$,
    where
        $\phi_{t}$, $p_{t}$
    are defined as in \S{}\ref{sec:interpolation:single:sig:article-interpolation-raj-dahya}.
    Clearly, $\phi^{(i)}_{t}$ is a measure-preserving bijection
    and $p^{(i)}_{t}$ is measurable.
    As in \S{}\ref{sec:interpolation:single:sig:article-interpolation-raj-dahya},
    defining
        ${U_{i}, P_{i} \colon \reals \to \BoundedOps{L^{2}(Z)}}$
    via
        $U_{i}(t)f = f \circ \phi^{(i)}_{t}$ (Koopman-operator)
        and
        $P_{i}(t)f = M_{p^{(i)}_{t}}f = p^{(i)}_{t}f$
    for $t\in\reals$, $f\in L^{2}(Z)$,
    we have that
        $U_{i} \in \Repr{\reals}{L^{2}(Z)}$
    and $P_{i}$ is right-$\topSOT$-continuous and projection-valued.
    Moreover $U_{i},P_{i}$ are periodic with
        $U_{i}(n) = U_{i}(0) = P_{i}(n) = P_{i}(0) = \onematrix$
    for $n\in\naturals$.

    Now, the commutation relation \eqcref{eq:bscr:sig:article-interpolation-raj-dahya}
    in \Cref{prop:bscr:sig:article-interpolation-raj-dahya}
    is clearly satisfied by $P_{i},U_{i}$.
    The proof of \Cref{lemm:bhat-skeide:single:sig:article-interpolation-raj-dahya}
    is thus clearly applicable to $P_{i},U_{i},S_{i}$.
    This yields that the operator-valued function
        ${T_{i} \colon \realsNonNeg \to \BoundedOps{L^{2}(Z) \otimes \HilbertRaum}}$
    given for $t\in\realsNonNeg$ by

        \begin{restoremargins}
        \begin{equation}
        \label{eq:bs-dilation-family:sig:article-interpolation-raj-dahya}
            T_{i}(t) \coloneqq (U_{i}(t) \otimes \onematrix)
                    \Big(
                        P_{i}(t) \otimes S_{i}^{\floor{t}}
                        + (\onematrix - P_{i}(t)) \otimes S_{i}^{\floor{t} + 1}
                    \Big)
        \end{equation}
        \end{restoremargins}

    \continueparagraph
    (\cf \eqcref{eq:bs-dilation-single:sig:article-interpolation-raj-dahya})
    defines a $\Cnought$-semigroup
    (%
        \respectively contractive $\Cnought$-semigroup,
        if $S_{i}$ is contractive%
    )
    on $L^{2}(Z) \otimes \HilbertRaum$
    satisfying $T_{i}(n) = \onematrix \otimes S_{i}^{n}$
    for $n\in\naturalsZero$.

    Let $i,j\in\mathcal{I}$ with $i \neq j$.
    Since the operators act on independent co-ordinates,
    it is routine to show that each of the operators in
        $\{P_{i}(s),U_{i}(t) \mid s,t\in\reals\}$
    commutes with each of the operators in
        $\{P_{j}(s),U_{j}(t) \mid s,t\in\reals\}$.
    Since $S_{i}$ and $S_{j}$ commute,
    it follows from the above construction
    that
        $T_{i}$ and $T_{j}$
    commute.
    Hence $\{T_{i}\}_{i\in\mathcal{I}}$
    is a commuting family of $\Cnought$-semigroups
    (%
        \respectively contractive $\Cnought$-semigroups,
        if each of the $S_{i}$ are contractive%
    )
    satisfying
        $
            \prod_{i\in\mathcal{I}}
                T_{i}(n_{i})
            = \prod_{i\in\mathcal{I}}
                (\onematrix \otimes S_{i}^{n_{i}})
        $
    for all $\mathbf{n}\in \prod_{i\in\mathcal{I}}\naturalsZero$
    with $\supp(\mathbf{n})$ finite.
\end{proof}

\begin{rem}[Non-commutative setting]
\makelabel{rem:non-commutative-interpolation:sig:article-interpolation-raj-dahya}
    Suppose $\{S_{i}\}_{i \in \mathcal{I}}$ is a \textit{non-commuting} family of contractions.
    We may still construct the interpolations $\{T_{i}\}_{i \in \mathcal{I}}$ of $\{S_{i}\}_{i \in \mathcal{I}}$
    exactly as in the proof of \Cref{lemm:bhat-skeide:multi:sig:article-interpolation-raj-dahya},
    except that this will no longer be a commuting family.
    If we now impose a scheme of commutation relations on the discrete family $\{S_{i}\}_{i \in \mathcal{I}}$,
    one may consider whether an appropriate schema of commutation relations is satisfied by the continuous family.
    For example, consider the Weyl form of the canonical commutation relations (CCR)
    presented in
    \cite[Example~1.8]{Dahya2024approximation}.
    The discrete counterpart of this can be presented as follows:
    Let
        $d\in\naturals$
        and
        $\{S_{i}\}_{i=1}^{d} \subseteq \BoundedOps{\HilbertRaum}$
        be a family of contractions
    satisfying
        $S_{j}S_{i} = V_{ij}S_{i}S_{j}$
    for $i,j\in\{1,2,\ldots,d\}$ with $i \neq j$.
    Here
        $\{V_{ij}\}_{i,j=1}^{d} \subseteq \BoundedOps{\HilbertRaum}$
    is a
    family of invertible operators,
    each of which commutes with each of the $S_{i}$.
    Now, by a simple induction argument one obtains the commutation relations

        \begin{restoremargins}
        \begin{equation}
        \label{eq:1:\beweislabel}
            S_{j}^{n}S_{i}^{m} = V_{ij}^{mn}S_{i}^{m}S_{j}^{n}
        \end{equation}
        \end{restoremargins}

    \continueparagraph
    for $m,n\in\naturalsZero$
    and $i,j\in\{1,2,\ldots,d\}$ with $i \neq j$.

    Consider now the interpolation $\{T_{i}\}_{i=1}^{d}$ of $\{S_{i}\}_{i=1}^{d}$
    as constructed in \eqcref{eq:bs-dilation-family:sig:article-interpolation-raj-dahya}.
    Let $i,j\in\{1,2,\ldots,d\}$ with $i \neq j$.
    Let $s,t\in\realsNonNeg$ and set $m\coloneqq\floor{s}$ and $n\coloneqq\floor{t}$.
    Since
        $\{U_{i},U_{j}\}$,
        $\{P_{i},P_{j}\}$,
        $\{U_{j},P_{i}\}$,
        and
        $\{U_{i},P_{j}\}$
    are each commuting families,
    one may arrive at the following commutation relation for
        $T_{i}(s),T_{j}(t)$:

        \begin{longeqnarray}
            T_{j}(t)T_{i}(s)
                &\eqcrefoverset{eq:bs-dilation-family:sig:article-interpolation-raj-dahya}{=}
                    &(
                        U_{j}(t)P_{j}(t) \otimes S_{j}^{n}
                        +
                        U_{j}(t)(\onematrix - P_{j}(t)) \otimes S_{j}^{n+1}
                    )\\
                    &&\cdot
                    (
                        U_{i}(s)P_{i}(s) \otimes S_{i}^{m}
                        +
                        U_{i}(s)(\onematrix - P_{i}(s)) \otimes S_{i}^{m+1}
                    )\\
                &\eqcrefoverset{eq:1:\beweislabel}{=}
                    &U_{i}(s)
                    U_{j}(t)
                    P_{i}(s)
                    P_{j}(t)
                    \otimes
                    V_{ij}^{mn}
                    S_{i}^{m}
                    S_{j}^{n}\\
                    &&+
                    U_{i}(s)
                    U_{j}(t)
                    (\onematrix - P_{i}(s))
                    P_{j}(t)
                    \otimes
                    V_{ij}^{(m+1)n}
                    S_{i}^{m+1}
                    S_{j}^{n}\\
                    &&+
                    U_{j}(t)
                    U_{i}(s)
                    (\onematrix - P_{j}(t))
                    P_{i}(s)
                    \otimes
                    V_{ij}^{m(n+1)}
                    S_{i}^{m}
                    S_{j}^{n+1}\\
                    &&+
                    U_{i}(s)
                    U_{j}(t)
                    (\onematrix - P_{i}(s))
                    (\onematrix - P_{j}(t))
                    \otimes
                    V_{ij}^{(m+1)(n+1)}
                    S_{i}^{m+1}
                    S_{j}^{n+1}\\
                &= &(U_{i}(s)U_{j}(t) \otimes \onematrix)\\
                    &&\cdot
                    \begin{displayarray}[t]{0l}
                        \Big(
                        P_{i}(s) P_{j}(t) \otimes V_{ij}^{mn}
                        +
                        P_{i}(s) (\onematrix - P_{j}(t)) \otimes V_{ij}^{m(n+1)}\\
                        +
                        (\onematrix - P_{i}(s)) P_{j}(t) \otimes V_{ij}^{(m+1)n}\\
                        +
                        (\onematrix - P_{i}(s)) (\onematrix - P_{j}(t)) \otimes V_{ij}^{(m+1)(n+1)}
                        \Big)
                    \end{displayarray}\\
                    &&\cdot (U_{i}(s)U_{j}(t) \otimes \onematrix)^{\ast}\\
                    &&\cdot
                    (
                        U_{i}(s) P_{i}(s) \otimes S_{i}^{m}
                        +
                        U_{i}(s) (\onematrix - P_{i}(s)) \otimes S_{i}^{m+1}
                    )\\
                    &&\cdot
                    (
                        U_{j}(t) P_{j}(t) \otimes S_{j}^{n}
                        +
                        U_{j}(t) (\onematrix - P_{j}(t)) \otimes S_{j}^{n+1}
                    )\\
                &\eqcrefoverset{eq:bs-dilation-family:sig:article-interpolation-raj-dahya}{=}
                    &V_{ij}(s,t) \cdot T_{i}(s)T_{j}(t),
        \end{longeqnarray}

    \continueparagraph
    where $V_{ij}(s,t)$ is completely determined by
        $s,t,P_{i},P_{j},U_{i},U_{j},V_{ij}$.
    If
        $V_{ij}(s,t) = U(2B_{ij}st)$
    for each $i,j \in \{1,2,\ldots,d\}$ with $i \neq j$
    and each $s,t\in\realsNonNeg$,
    where $B \in M_{d}(\complex)$ is some antisymmetric matrix
    and $U$ an $\topSOT$-continuous representation of $(\reals,+,0)$
    on $\BoundedOps{L^{2}(\Torus^{d})\otimes\HilbertRaum}$,
    then the semigroups $\{T_{i}\}_{i=1}^{d}$
    satisfy the CCR in Weyl form
    (\cf \cite[Example~1.8]{Dahya2024approximation}).
    From the above expression, it is however unclear whether
        $V_{ij}(s,t)$
    can be expressed in this way.
    It would thus be nice to know if there is an appropriate alternative
    to our generalisation of the Bhat--Skeide interpolation,
    such that a family of contractions
    satisfying commutation relations such as \eqcref{eq:1:\beweislabel}
    can be interpolated to a family of $\Cnought$-semigroups satisfying the CCR.
\end{rem}

\begin{rem}
    If possible, it would be nice to find an interpolation technique similar to that of Bhat--Skeide
    (or our generalisation to commuting families)
    for the Banach space setting.
\end{rem}



\subsection[Application to the embedding problem]{Application to the embedding problem}
\label{sec:interpolation:embed:sig:article-interpolation-raj-dahya}

\firstparagraph
In this section we consider the classical \highlightTerm{embedding problem} in semigroup theory
of finding for a given bounded operator $S$ on Banach space $\BanachRaum$
a ($1$-parameter) $\Cnought$-semigroup $T$ on $\BanachRaum$
satisfying $T(1) = S$.
Not all operators are embeddable
(\exempli non-bijective Fredholm operators or non-zero nilpotent matrices).
And in the literature various results provide either
    sufficient or necessary conditions
    or characterisations
    of embeddability for certain subclasses of operators
(see \exempli \cite{Eisner2009embeddingOpSemigroup,EisnerRadl2022embed}).
To the best of our knowledge,
no complete characterisation of the embeddability of bounded operators appears to be known.


In the context of Hilbert spaces, however,
the Bhat--Skeide interpolation demonstrates that embeddability
can always be achieved \textit{up to a certain modification}.
This leads us to the following problem:

\begin{problem}
\makelabel{problem:aux-embed-dim:single:sig:article-interpolation-raj-dahya}
    Given a Hilbert space $\HilbertRaum$,
    what is the smallest dimension
    that an auxiliary Hilbert space
        $\HilbertRaum^{\prime}$
    can have,
    such that
    for each
        $S \in \BoundedOps{\HilbertRaum}$,
    the tensor product
        $
            \onematrix_{\HilbertRaum^{\prime}} \otimes S
            \in \BoundedOps{\HilbertRaum^{\prime} \otimes \HilbertRaum}
        $
    can be embedded into a $\Cnought$-semigroup
    on $\HilbertRaum^{\prime} \otimes \HilbertRaum$?
\end{problem}

Note again, as mentioned in \S{}\ref{sec:introduction:notation:sig:article-interpolation-raj-dahya}
that all Hilbert spaces are taken to be at least $1$-dimensional.
If $\HilbertRaum$ and $\HilbertRaum^{\prime}$ are Hilbert spaces
with $\dim(\HilbertRaum^{\prime})=1$,
then
    $\HilbertRaum \cong \HilbertRaum^{\prime} \otimes \HilbertRaum$
via $\xi \mapsto e \otimes \xi$,
where $e \in \HilbertRaum^{\prime}$ can be taken to be any unit vector.
From this it is easy to observe for $S \in \BoundedOps{\HilbertRaum}$
that
    $\onematrix_{\HilbertRaum^{\prime}} \otimes S$
can be embedded into a $\Cnought$-semigroup on
    $\HilbertRaum^{\prime} \otimes \HilbertRaum$
if and only if
    $S$ can be embedded into a $\Cnought$-semigroup on $\HilbertRaum$.
If $\dim(\HilbertRaum^{\prime}) > 1$,
then the \usesinglequotes{if} direction continues to holds,
whilst the \usesinglequotes{only if} direction fails in general.
Hence the above notion of embedding covers and also strictly extends
the standard definition.

\begin{prop}
\makelabel{prop:auxiliary-embed-dimension:sig:article-interpolation-raj-dahya}
    The optimal dimension in \Cref{problem:aux-embed-dim:single:sig:article-interpolation-raj-dahya}
    for any Hilbert space is $\aleph_{0}$.%
    \footref{ft:aleph-0:sig:article-interpolation-raj-dahya}
\end{prop}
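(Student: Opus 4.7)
The plan is to prove matching upper and lower bounds on the optimal dimension. For the upper bound the natural candidate is $\HilbertRaum' \coloneqq L^{2}(\Torus)$, which has dimension $\aleph_{0}$: for any $S \in \BoundedOps{\HilbertRaum}$, \Cref{lemm:bhat-skeide:single:sig:article-interpolation-raj-dahya} yields a $\Cnought$-semigroup $T$ on $L^{2}(\Torus) \otimes \HilbertRaum$ satisfying $T(n) = \onematrix \otimes S^{n}$ for all $n \in \naturalsZero$, so that specialising to $n = 1$ realises $\onematrix_{\HilbertRaum'} \otimes S$ as $T(1)$, giving the desired embedding.

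For the lower bound I would show that no finite-dimensional $\HilbertRaum'$ suffices, by producing, for each such $\HilbertRaum'$ with $n \coloneqq \dim(\HilbertRaum') < \infty$, some $S \in \BoundedOps{\HilbertRaum}$ whose lift is non-embeddable. Under the canonical isomorphism $\HilbertRaum' \otimes \HilbertRaum \cong \bigoplus_{i=1}^{n} \HilbertRaum$ the operator $\onematrix_{\HilbertRaum'} \otimes S$ corresponds to the $n$-fold block-diagonal operator $\bigoplus_{i=1}^{n} S$. If $\HilbertRaum$ is itself finite-dimensional, then $\HilbertRaum' \otimes \HilbertRaum$ is also finite-dimensional and the choice $S \coloneqq 0$ suffices: every $\Cnought$-semigroup on a finite-dimensional Hilbert space is norm-continuous with bounded generator, hence of the form $t \mapsto e^{tA}$ with invertible values, whereas $\onematrix \otimes S = 0$ is non-invertible.

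The main obstacle is the infinite-dimensional case, where $\HilbertRaum' \otimes \HilbertRaum$ is infinite-dimensional and can accommodate $\Cnought$-semigroups with non-invertible values (for instance nilpotent shift semigroups), so a cruder rank-theoretic argument is insufficient. Here I would invoke Fredholm theory as the non-embeddability obstruction: fix a closed separable infinite-dimensional subspace $H \subseteq \HilbertRaum$, let $U \in \BoundedOps{H}$ be a unilateral shift, and set $S \coloneqq U \oplus \onematrix_{H^{\perp}} \in \BoundedOps{\HilbertRaum}$. Then $S$ is Fredholm with $\mathrm{ind}(S) = -1$, so the lift $\onematrix_{\HilbertRaum'} \otimes S \cong \bigoplus_{i=1}^{n} S$ is Fredholm of index $-n \neq 0$. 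Should it equal $T(1)$ for a $\Cnought$-semigroup $T$, then $T(1) = T(1/k)^{k}$ for every $k \in \naturalsPos$; Atkinson's theorem (applied to a Fredholm parametrix for $T(1)$ to manufacture one for $T(1/k)$) forces each $T(1/k)$ to be Fredholm, and multiplicativity of the index under powers yields $-n = k \cdot \mathrm{ind}(T(1/k))$, so that $k \mid n$ for every $k \in \naturalsPos$, which is impossible. Combining the two directions yields $\aleph_{0}$ as the optimal dimension.
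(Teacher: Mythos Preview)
Your proof is correct and follows essentially the same route as the paper: the upper bound via the Bhat--Skeide interpolation, and the lower bound via the case split with $S=0$ in the finite-dimensional case and a non-bijective Fredholm operator in the infinite-dimensional case. The only difference is that the paper cites \cite[Corollary~3.2]{Eisner2009embeddingOpSemigroup} for the non-embeddability of such operators, whereas you supply a self-contained index-divisibility argument (which is in fact the core of that cited result).
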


\footnotetext[ft:aleph-0:sig:article-interpolation-raj-dahya]{%
    $\aleph_{0}=\card{\naturals}$ denotes the first infinite cardinal,
    or simply: the cardinality of the naturals.
}

    \begin{proof}
        Let $\HilbertRaum$ be a Hilbert space and let
            $d_{\HilbertRaum}$
        denote the optimal dimension in
        \Cref{problem:aux-embed-dim:single:sig:article-interpolation-raj-dahya}
        for $\HilbertRaum$.
        Consider an arbitrary operator
            $S \in \BoundedOps{\HilbertRaum}$.
        Since the Bhat--Skeide interpolation of $S$
        (see \Cref{lemm:bhat-skeide:single:sig:article-interpolation-raj-dahya})
        yields a $\Cnought$-semigroup
            $T$ on $L^{2}(\Torus) \otimes \HilbertRaum$
        satisfying
            $T(1) = \onematrix_{L^{2}(\Torus)} \otimes S$,
        it follows that
            $d_{\HilbertRaum}$ is bounded above by $\dim(L^{2}(\Torus)) = \aleph_{0}$.
        To complete the proof, the possibility
        of $d_{\HilbertRaum}$ being finite
        needs to be ruled out.
        So suppose to the contrary, there be a finite-dimensional
        Hilbert space $\HilbertRaum^{\prime}$
        for which
            $
                \onematrix \otimes S
                \in \BoundedOps{\HilbertRaum^{\prime} \otimes \HilbertRaum}
            $
        can be embedded into a $1$-parameter $\Cnought$-semigroup
        for each $S \in \BoundedOps{\HilbertRaum}$.

        \begin{enumerate}{\itshape {Case} 1.}
        \item $\dim(\HilbertRaum)$ is finite.
            Consider $S \coloneqq \zeromatrix \in \BoundedOps{\HilbertRaum}$.
            We show that $\onematrix_{\HilbertRaum^{\prime}} \otimes S$
            cannot be embedded into a $\Cnought$-semigroup.
            So suppose to the contrary that
                $T$ is a $\Cnought$-semigroup on $\HilbertRaum^{\prime} \otimes \HilbertRaum$
            with
                $T(1) = \onematrix_{\HilbertRaum^{\prime}} \otimes S = \zeromatrix$.
            The finite-dimensionality of
                $\HilbertRaum^{\prime} \otimes \HilbertRaum$
            implies norm-continuity of the semigroup.
            And since the subspace of invertible operators is norm-open
            and
                ${T(t) \longrightarrow \onematrix}$
            in norm for
                ${t \longrightarrow 0^{+}}$,
            it follows that $T(\frac{1}{n})$ and thus $T(1)=T(\frac{1}{n})^{n}$
            are invertible for sufficiently large $n\in\naturals$.
            This contradicts $T(1) = \zeromatrix$.

        \item $\dim(\HilbertRaum)$ is infinite.
            By infinite-dimensionality,
            there exists a non-bijective Fredholm operator
                $S \in \BoundedOps{\HilbertRaum}$,
            \idest
                $\dim(\ker(S))$
                and
                $\dim(\ker(S^{\ast})) = \dim(\quer{\ran(S)}^{\perp})$
            are finite
            with at least one these being non-zero.
            Now, one can readily verify that
                $
                    \ker(\onematrix_{\HilbertRaum^{\prime}} \otimes S)
                    = \HilbertRaum^{\prime} \otimes \ker(S)
                $
            and thus
                $
                    \dim(\ker(\onematrix \otimes S))
                    = \dim(\HilbertRaum^{\prime})\dim(\ker(S))
                $.
            Similarly,
                $
                    \dim(\ker((\onematrix_{\HilbertRaum^{\prime}} \otimes S)^{\ast}))
                    = \dim(\ker(\onematrix_{\HilbertRaum^{\prime}} \otimes S^{\ast}))
                    = \dim(\HilbertRaum^{\prime})\dim(\ker(S^{\ast}))
                $.
            From these expressions and the assumptions on $S$ and the auxiliary space,
            both
                $\dim(\ker(\onematrix_{\HilbertRaum^{\prime}} \otimes S))$
                and
                $\dim(\quer{\ran(\onematrix_{\HilbertRaum^{\prime}} \otimes S)}^{\perp})$
            are finite,
            with at least one of these being non-zero.
            Thus $\onematrix_{\HilbertRaum^{\prime}} \otimes S$
            is a non-bijective Fredholm operator
            and thus by \cite[Corollary~3.2]{Eisner2009embeddingOpSemigroup}
            cannot be embedded into a $\Cnought$-semigroup.
        \end{enumerate}

        In both cases, one sees that no finite-dimensional auxiliary space suffices.
        Hence $d_{\HilbertRaum}$ must be at least $\aleph_{0}$.
    \end{proof}

\begin{rem}
    \makelabel{rem:alternative-proof-aux-dim-not-finite:sig:article-interpolation-raj-dahya}
    We thank the anonymous referee for the suggested
    argument to prove \textit{Case 1} above,
    which shortened our original algebraic approach.
    More generally, note that no non-zero nilpotent operator
    on a finite-dimensional Hilbert
    space can be embedded into a $\Cnought$-semigroup
    (see \exempli \cite[Theorem~3.1]{Eisner2009embeddingOpSemigroup}).
\end{rem}



The embedding problem can also readily be extended to families of operators.
In light of out generalisation of the Bhat--Skeide interpolation,
we consider the following:

\begin{problem}
\makelabel{problem:aux-embed-dim:family:sig:article-interpolation-raj-dahya}
    Let $\HilbertRaum$ be a Hilbert space and $\mathcal{I}$ a non-empty index set.
    What is the smallest dimension
    that an auxiliary Hilbert space
        $\HilbertRaum^{\prime}$,
    can have,
    such that
    for each commuting family
        $\{S_{i}\}_{i\in\mathcal{I}} \subseteq \BoundedOps{\HilbertRaum}$
    of bounded operators,
    the commuting family
        $
            \{\onematrix_{\HilbertRaum^{\prime}} \otimes S_{i}\}_{i\in\mathcal{I}}
            \subseteq \BoundedOps{\HilbertRaum^{\prime} \otimes \HilbertRaum}
        $
    can be \highlightTerm{simultaneously embedded} into a commuting family
        $\{T_{i}\}_{i\in\mathcal{I}}$
        of $\Cnought$-semigroups
        on $\HilbertRaum^{\prime} \otimes \HilbertRaum$,
    in the sense that
        $T_{i}(1) = \onematrix_{\HilbertRaum^{\prime}} \otimes S_{i}$
    for $i\in\mathcal{I}$?
\end{problem}

Let $\kappa$ denote this dimension.
\Cref{lemm:bhat-skeide:multi:sig:article-interpolation-raj-dahya}
yields that the $\kappa$ is bounded above by
    $\kappa_{\max} \coloneqq \dim(L^{2}(\prod_{i\in\mathcal{I}}\Torus))$.
The arguments in the proof of \Cref{prop:auxiliary-embed-dimension:sig:article-interpolation-raj-dahya}
also yield
    $\kappa \geq \aleph_{0}$.
If $\mathcal{I}$ is countable%
\footref{ft:prose:countable:sig:article-interpolation-raj-dahya}
then $\aleph_{0} \leq \kappa \leq \kappa_{\max}=\aleph_{0}$,
whence the choice of auxiliary space
in the Bhat--Skeide interpolation is optimal.
If $\mathcal{I}$ is uncountable, then $\kappa_{\max} > \aleph_{0}$.%
\footref{ft:prose:dim-of-L^2-prod:sig:article-interpolation-raj-dahya}
It would be nice to know if our generalisation
of the Bhat--Skeide interpolation
continues to provide the optimal solution for embedding
in this case.

\footnotetext[ft:prose:countable:sig:article-interpolation-raj-dahya]{%
    \idest finite or countably infinite
}

\footnotetext[ft:prose:dim-of-L^2-prod:sig:article-interpolation-raj-dahya]{%
    Using elementary cardinal arithmetic
    one can readily obtain that
    $
        \dim(L^{2}(\prod_{i\in\mathcal{I}}\Torus))
        =\max\{\card{\mathcal{I}},\aleph_{0}\}
    $.
    So in particular, $\kappa_{\max} > \aleph_{0}$
    if $\mathcal{I}$ is an uncountable index set.
}




Finally, consider the space of $d$-tuples of commuting contractions for some $d\in\naturals$.
In the above problem we determined that
in the optimal \emph{worst-case scenario},
the dimension of the auxiliary space
needed for simultaneous embeddings is $\aleph_{0}$.
In the \emph{best-case scenario}, this dimension is $1$,
which holds just in case a tuple can itself be simultaneously embedded
(\cf the discussion at the start of this section).
We can consider the subspace of such tuples and ask what topological properties it enjoys.

\begin{prop}
\makelabel{prop:sim-embeddable-tuples-pw-dense:sig:article-interpolation-raj-dahya}
    Let $d\in\naturals$ and $\HilbertRaum$ be an infinite-dimensional Hilbert space.
    Let $E_{d}$
    denote the set of all $d$-tuples of commuting contractions
    which can be simultaneously embedded into a commuting family
    of $\Cnought$-semigroups on $\HilbertRaum$.
    Then $E_{d}$ is dense in $(\OpSpaceC{\HilbertRaum}^{d}_{\textup{comm}},\topPW)$.
\end{prop}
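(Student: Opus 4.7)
The plan is to apply the generalised Bhat--Skeide interpolation of \Cref{lemm:bhat-skeide:multi:sig:article-interpolation-raj-dahya} and transport its output on $L^{2}(\Torus^{d}) \otimes \HilbertRaum$ back to $\HilbertRaum$ via a unitary that is prescribed on a finite-dimensional test subspace; the resulting embeddable tuple will agree with $\{S_{i}\}$ \emph{exactly} on the test data of any prescribed basic $\topPW$-neighbourhood, giving density.

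Concretely, given a commuting tuple $\{S_{i}\}_{i=1}^{d} \in \OpSpaceC{\HilbertRaum}^{d}_{\textup{comm}}$ and a basic $\topPW$-neighbourhood specified by finite collections $\xi_{1},\ldots,\xi_{k},\eta_{1},\ldots,\eta_{k} \in \HilbertRaum$ and $\mathbf{n}^{(1)},\ldots,\mathbf{n}^{(l)} \in \naturalsZero^{d}$, I would first invoke \Cref{lemm:bhat-skeide:multi:sig:article-interpolation-raj-dahya} to obtain a commuting family $\{T_{i}\}_{i=1}^{d}$ of contractive $\Cnought$-semigroups on $\mathcal{K}\otimes\HilbertRaum$, with $\mathcal{K} \coloneqq L^{2}(\Torus^{d})$, satisfying $T_{i}(1) = \onematrix \otimes S_{i}$. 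Since $\HilbertRaum$ is infinite-dimensional, $\mathcal{K}\otimes\HilbertRaum$ and $\HilbertRaum$ have the same Hilbert-space dimension, and any unitary between them transports $\{T_{i}\}$ to a commuting family of contractive $\Cnought$-semigroups on $\HilbertRaum$ whose values at $1$ automatically form a tuple in $E_{d}$.

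The key step is the choice of the transporting unitary. Let $V \coloneqq \textup{lin}\{\xi_{j},\eta_{j} : 1 \leq j \leq k\}$, a finite-dimensional subspace of $\HilbertRaum$, and let $f_{0} = \einser_{\Torus^{d}} \in \mathcal{K}$, a unit vector. The isometry $\xi \mapsto f_{0}\otimes\xi$ from $V$ to $f_{0}\otimes V \subseteq \mathcal{K}\otimes\HilbertRaum$ extends to a unitary $W^{\ast} \colon \HilbertRaum \to \mathcal{K}\otimes\HilbertRaum$, precisely because the orthogonal complements $V^{\perp} \subseteq \HilbertRaum$ and $(f_{0}\otimes V)^{\perp} \subseteq \mathcal{K}\otimes\HilbertRaum$ share the same infinite Hilbert-space dimension as $\HilbertRaum$. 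Setting $S'_{i} \coloneqq W(\onematrix \otimes S_{i})W^{\ast} = W T_{i}(1) W^{\ast}$, the tuple $\{S'_{i}\}_{i=1}^{d}$ is a commuting contractive tuple on $\HilbertRaum$ that is simultaneously embedded by $\{W T_{i}(\cdot) W^{\ast}\}_{i=1}^{d}$, whence $\{S'_{i}\} \in E_{d}$. A direct computation using $W^{\ast}\xi_{j} = f_{0}\otimes\xi_{j}$, $W^{\ast}\eta_{j} = f_{0}\otimes\eta_{j}$, and $\norm{f_{0}} = 1$ yields $\brkt{\prod_{i=1}^{d}(S'_{i})^{n_{i}}\xi_{j}}{\eta_{j}} = \brkt{\prod_{i=1}^{d}S_{i}^{n_{i}}\xi_{j}}{\eta_{j}}$ for every $j$ and every multi-index $\mathbf{n}$, so $\{S'_{i}\}$ lies in (indeed at the centre of) the chosen neighbourhood. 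Since the neighbourhood was arbitrary, $\{S_{i}\} \in \overline{E_{d}}$, and density follows.

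There is no substantive obstacle here: the construction is elementary once the generalised Bhat--Skeide interpolation is in hand. The only delicate point is the existence of the unitary $W$ with the prescribed action on $V$, which critically uses the infinite-dimensionality hypothesis to extend a partial isometry between equal-dimensional finite subspaces by a unitary between the equal-dimensional infinite orthogonal complements.
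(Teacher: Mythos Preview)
Your proposal is correct and follows essentially the same approach as the paper: apply the generalised Bhat--Skeide interpolation on $L^{2}(\Torus^{d})\otimes\HilbertRaum$, then transport back via a unitary that sends each test vector $\xi$ to $\einser\otimes\xi$, so that the resulting embeddable tuple matches $\{S_{i}\}$ exactly on the neighbourhood data. The paper additionally packages the approximants into an explicitly named subset $\tilde{E}_{d}\subseteq E_{d}$ (for later reference), but the mathematical content is the same.
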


    \begin{proof}
        Let
            $\{S_{i}\}_{i=1}^{d} \in \OpSpaceC{\HilbertRaum}^{d}_{\textup{comm}}$
        be arbitrary.
        By our generalisation of the Bhat--Skeide Interpolation,
        we know that the commuting $d$-tuple
            $
                \{\onematrix \otimes S_{i}\}_{i=1}^{d}
                \in
                \OpSpaceC{L^{2}(\Torus^{d}) \otimes \HilbertRaum}^{d}
            $
        can be simultaneously embedded.
        And for any unitary operator
            ${w \colon L^{2}(\Torus^{d}) \otimes \HilbertRaum \to \HilbertRaum}$
        one readily sees that
            $
                \{\adjoint_{w}(\onematrix \otimes S_{i})\}_{i=1}^{d}
                \in
                \OpSpaceC{\HilbertRaum}^{d}
            $
        is a commuting $d$-tuple
        that can also be simultaneously embedded.
        Hence

            \begin{restoremargins}
            \begin{equation}
            \label{eq:special-embeddable-tuples:sig:article-interpolation-raj-dahya}
                \begin{displayarray}[m]{rcl}
                    E_{d} \supseteq \Big\{
                        \{\adjoint_{w}(\onematrix \otimes S_{i})\}_{i=1}^{d}
                    &\mid
                        &\{S_{i}\}_{i=1}^{d} \in \OpSpaceC{\HilbertRaum}^{d}_{\textup{comm}},\\
                        &&{w \colon L^{2}(\Torus^{d}) \otimes \HilbertRaum \to \HilbertRaum}
                        ~\text{unitary}
                    \Big\}
                    \eqqcolon \tilde{E}_{d}\\
                \end{displayarray}
            \end{equation}
            \end{restoremargins}

        \continueparagraph
        and it suffices to show that
            $\tilde{E}_{d}$
            is dense in
            $(\OpSpaceC{\HilbertRaum}^{d}_{\textup{comm}}, \topPW)$.
        To this end consider an arbitrary
            $\{S_{i}\}_{i=1}^{d} \in \OpSpaceC{\HilbertRaum}^{d}_{\textup{comm}}$
            and
            finite $F \subseteq \HilbertRaum$.
        Since $\dim(\HilbertRaum)$ is infinite,
        there exists a unitary operator
            ${w \colon L^{2}(\Torus^{d}) \otimes \HilbertRaum \to \HilbertRaum}$
        satisfying
            $
                w\,(\einser \otimes \xi)
                = \xi
            $
        for each $\xi \in F$.%
        \footref{ft:pf:unitary-equivalence-L^2-tensor-H:sig:article-interpolation-raj-dahya}
        For
            $\mathbf{n} \in \naturalsZero^{d}$
            and $\xi,\eta \in F$
        one obtains

            \begin{longeqnarray}
                \brktLong{
                    \Big(
                    \prod_{i=1}^{d}
                        \Big(
                            \adjoint_{w}
                            (
                                \onematrix
                                \otimes
                                S_{i}
                            )
                        \Big)^{n_{i}}
                    \Big)
                    \:\xi
                }{
                    \eta
                }
                &= &\brktLong{
                        \Big(
                            \onematrix
                            \otimes
                            \prod_{i=1}^{d}
                                S_{i}^{n_{i}}
                        \Big)
                        \:w^{\ast}\xi
                    }{
                        w^{\ast}\eta
                    }\\
                &= &\brktLong{
                        \Big(
                            \onematrix
                            \otimes
                            \prod_{i=1}^{d}
                                S_{i}^{n_{i}}
                        \Big)
                        \:(\einser \otimes \xi)
                    }{
                        \einser \otimes \eta
                    }\\
                &= &\underbrace{
                        \brkt{\einser}{\einser}
                    }_{=1}
                    \cdot
                    \brktLong{
                        \Big(
                            \prod_{i=1}^{d}
                                S_{i}^{n_{i}}
                        \Big)
                        \xi
                    }{\eta}.\\
            \end{longeqnarray}

        \continueparagraph
        Since $\{S_{i}\}_{i=1}^{d}$ was arbitrarily chosen,
        the density of $\tilde{E}_{d}$ in
            $(\OpSpaceC{\HilbertRaum}^{d}_{\textup{comm}}, \topPW)$
        follows.
        And by \eqcref{eq:special-embeddable-tuples:sig:article-interpolation-raj-dahya}
        the claim holds.
    \end{proof}

    \footnotetext[ft:pf:unitary-equivalence-L^2-tensor-H:sig:article-interpolation-raj-dahya]{%
        in particular,
            $L^{2}(\Torus^{d}) \otimes \HilbertRaum$ and $\HilbertRaum$
        are unitarily equivalent,
        since by elementary arithmetic
        of transfinite cardinals,
        setting $\kappa \coloneqq \dim(\HilbertRaum)$
        one has
        $
            \dim(L^{2}(\Torus^{d}) \otimes \HilbertRaum)
            = \dim(L^{2}(\Torus^{d})) \cdot \dim(\HilbertRaum)
            = \aleph_{0} \cdot \kappa
            = \max\{\aleph_{0}, \kappa\}
            = \kappa
            = \dim(\HilbertRaum)
        $.
    }




\subsection[Numerical applications]{Numerical applications}
\label{sec:interpolation:application:sig:article-interpolation-raj-dahya}

\firstparagraph
In this section we demonstrate
how the Bhat--Skeide interpolation can be exploited
for the purposes of time-discretisation
of multi-parameter $\Cnought$-semigroups on Hilbert spaces.


\subsubsection[Interpolation under time-discretisation]{Interpolation under time-discretisation}
\label{sec:interpolation:application:discrete:sig:article-interpolation-raj-dahya}

\firstparagraph
Consider the following scenario:

\begin{problem}
\label{problem:signal-recovery:sig:article-interpolation-raj-dahya}
    Suppose a classical dynamical system is governed by an unknown commuting family
        $\{\breve{T}_{i}\}_{i=1}^{d}$
    of $d$ contractive $\Cnought$-semigroups on a Hilbert space $\HilbertRaum$.
    We are provided with full information about the values
        $\{S_{i} = \breve{T}_{i}(1)\}_{i=1}^{d}$.
    We seek a procedure to construct a commuting family
        $\{T_{i}\}_{i=1}^{d}$ of $\Cnought$-semigroups
    (on a finite-dimensional Hilbert space, if $\dim(\HilbertRaum) < \infty$),
    which recovers
        $\{\breve{T}_{i}\}_{i=1}^{d}$
    as much as possible based on the given information.
    The semigroups should be definable on arbitrarily fine
    discretisations of time points.
\end{problem}

Since $\{S_{i}\}_{i=1}^{d}$ is clearly a commuting family of contractions,
as a first attempt, one could apply
    \Cref{lemm:bhat-skeide:multi:sig:article-interpolation-raj-dahya}
to obtain an interpolation
    $\{T_{i}\}_{i=1}^{d}$
of
    $\{S_{i}\}_{i=1}^{d}$
on the Hilbert space $L^{2}(\Torus^{d}) \otimes \HilbertRaum$.
Doing so, however, yields a semigroup on an infinite-dimensional space.
Though, by discretising time points, one can control the growth of the
dimension of the Hilbert space.
For $N\in\naturals$ let
    $
        \Torus_{N}
        \coloneqq
        \{
            e^{\iunit 2\pi t}
            \mid
            t \in \frac{1}{N}\integers
        \}
        =
        \{
            e^{\iunit 2\pi k/N}
            \mid
            k \in \{0,1,\ldots,N\}
        \}
    $
and let
    $\{
        \delta_{\mathbf{z}}
        \mid
        \mathbf{z} \in \Torus_{N}^{d}
    \}$
be the canonical ONB for $\ell^{2}(\Torus_{N}^{d})$.
The following two propositions provide a solution to \Cref{problem:signal-recovery:sig:article-interpolation-raj-dahya}:

\begin{prop}
\makelabel{prop:discretised-interpolation:sig:article-interpolation-raj-dahya}
    Let $N,d\in\naturals$
    and $\{\breve{T}_{i}\}_{i=1}^{d}$
    be a commuting family of $\Cnought$-semigroups on a Hilbert space $\HilbertRaum$.
    Let
        $\{T_{i}\}_{i=1}^{d}$
    be the generalised Bhat--Skeide interpolation
    of $\{S_{i} \coloneqq \breve{T}_{i}(1)\}_{i=1}^{d}$
    as in \Cref{lemm:bhat-skeide:multi:sig:article-interpolation-raj-dahya}.
    There exists a contractive homomorphism
        $T^{(N)}$
    over the discrete monoid
        $((\frac{1}{N}\naturalsZero)^{d},+,\zerovector)$
    on $\ell^{2}(\Torus_{N}^{d}) \otimes \HilbertRaum$
    satisfying

        \begin{restoremargins}
        \begin{equation}
        \label{eq:discr-interpolation-dilation:sig:article-interpolation-raj-dahya}
            (r_{N} \otimes \onematrix) \: T^{(N)}(\mathbf{t})
                = \Big(
                \prod_{i=1}^{d}
                    T_{i}(t_{i})
                \Big)
                \:(r_{N} \otimes \onematrix)
        \end{equation}
        \end{restoremargins}

    \continueparagraph
    for all $\mathbf{t} \in (\frac{1}{N}\integers)^{d}$,
    where
        $r_{N} \in \BoundedOps{\ell^{2}(\Torus_{N}^{d})}{L^{2}(\Torus^{d})}$
    is a canonically defined isometry.
    In particular
        $
            T^{(N)}(0,0,\ldots,\underset{i}{n},\ldots,0)
            = \onematrix \otimes \breve{T}_{i}(n)
        $
    for $n\in\naturalsZero$ and $i\in\{1,2,\ldots,d\}$.
\end{prop}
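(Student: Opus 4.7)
The plan is to construct $T^{(N)}$ by compressing the continuous interpolation $\prod_{i=1}^{d}T_{i}$ onto a finite-dimensional subspace of the auxiliary Hilbert space $L^{2}(\Torus^{d})$ that is jointly invariant for the building blocks $\{U_{i}(t),P_{i}(t)\}_{i=1}^{d}$ of the Bhat--Skeide construction whenever $t\in\frac{1}{N}\integers$.

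First I would identify $\ell^{2}(\Torus_{N}^{d})$ isometrically with a subspace of $L^{2}(\Torus^{d})$. Writing the elements of $\Torus_{N}^{d}$ as $\mathbf{z}=(e^{\iunit 2\pi k_{i}/N})_{i=1}^{d}$ with $k_{i}\in\{0,1,\ldots,N-1\}$, I set $R(\mathbf{z})\coloneqq\prod_{i=1}^{d}[\tfrac{k_{i}}{N},\tfrac{k_{i}+1}{N})$ inside $[0,1)^{d}\cong\Torus^{d}$ and define $r_{N}\colon \ell^{2}(\Torus_{N}^{d}) \to L^{2}(\Torus^{d})$ on the canonical ONB by $r_{N}\delta_{\mathbf{z}}\coloneqq N^{d/2}\einser_{R(\mathbf{z})}$, extended by linearity. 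Since the boxes $R(\mathbf{z})$ partition $\Torus^{d}$ into cells of equal measure $1/N^{d}$, the map $r_{N}$ is a canonical isometry with range $V\coloneqq\textup{span}\{\einser_{R(\mathbf{z})}\mid \mathbf{z}\in\Torus_{N}^{d}\}$.

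Next I would verify that $V$ is invariant under $U_{i}(t)$ and $P_{i}(t)$ for every $t\in\frac{1}{N}\integers$ and every $i$. For such $t$ the rotation $\phi_{t}^{(i)}$ permutes the partition $\{R(\mathbf{z})\}_{\mathbf{z}}$, so the Koopman operator $U_{i}(t)$ simply permutes the basis $\{\einser_{R(\mathbf{z})}\}_{\mathbf{z}}$ of $V$. Moreover $\fractional{t}\in\{0,\tfrac{1}{N},\ldots,\tfrac{N-1}{N}\}$, so the arc $\{e^{\iunit 2\pi s}\mid s\in[0,1-\fractional{t})\}$ is a union of consecutive cells of length $\tfrac{1}{N}$, and hence $P_{i}(t)$ acts on $V$ as a $\{0,1\}$-valued diagonal operator in the basis $\{\einser_{R(\mathbf{z})}\}_{\mathbf{z}}$. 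Combining these two facts with formula \eqcref{eq:bs-dilation-family:sig:article-interpolation-raj-dahya}, the interpolant $T_{i}(t)$ maps $V\otimes\HilbertRaum$ into itself for all $t\in\frac{1}{N}\naturalsZero$, and finite products of such operators inherit this invariance.

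Having established invariance, I would define
\begin{equation*}
    T^{(N)}(\mathbf{t})\coloneqq (r_{N}^{\ast}\otimes\onematrix)\Big(\prod_{i=1}^{d}T_{i}(t_{i})\Big)(r_{N}\otimes\onematrix)
\end{equation*}
for $\mathbf{t}\in(\tfrac{1}{N}\naturalsZero)^{d}$. Since $V\otimes\HilbertRaum$ is invariant under each $T_{i}(t_{i})$, this definition is a restriction (rather than a genuine compression) of a contraction to an invariant subspace, which at once yields the intertwining identity \eqcref{eq:discr-interpolation-dilation:sig:article-interpolation-raj-dahya}, the semigroup and commutation laws for $T^{(N)}$, and its contractivity. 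The stated special case then follows from the interpolation property of \Cref{lemm:bhat-skeide:multi:sig:article-interpolation-raj-dahya} together with $r_{N}^{\ast}r_{N}=\onematrix$: indeed $T^{(N)}(0,\ldots,\underset{i}{n},\ldots,0)=(r_{N}^{\ast}\otimes\onematrix)(\onematrix\otimes S_{i}^{n})(r_{N}\otimes\onematrix)=\onematrix\otimes\breve{T}_{i}(n)$.

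The only point requiring genuine care is the matching of the discretisation parameter $N$ with the interpolation: every Bhat--Skeide building block used at time $t$ preserves the $1/N$-partition \emph{precisely} when $t\in\frac{1}{N}\integers$, and the rest is a standard compression-to-an-invariant-subspace argument.
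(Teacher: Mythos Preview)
Your proposal is correct and follows essentially the same approach as the paper: the paper defines the very same isometry $r_{N}$ (phrased as $\delta_{(e^{\iunit 2\pi t_{i}})_{i}}\mapsto (\prod_{i}U_{i}(t_{i}))u_{1/N}$ with $u_{1/N}=N^{d/2}\einser_{[0,1/N)^{d}}$), establishes that $\ran(r_{N})$ is invariant under each $U_{i}(k/N)$ and $P_{i}(k/N)$, and then defines $T^{(N)}$ via the resulting intertwining relation. The only cosmetic difference is that the paper appeals to the BSCR to check invariance under $P_{i}(k/N)$, whereas your direct observation that the arc $[0,1-\fractional{t})$ is a union of $1/N$-cells is equally valid and slightly cleaner.
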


    \begin{proof}
        We use the constructions of $P_{i},U_{i}$, $i\in\{1,2,\ldots,d\}$
        as in the proof of \Cref{lemm:bhat-skeide:multi:sig:article-interpolation-raj-dahya}.
        Set
            $
                u_{h}
                    \coloneqq
                    \frac{1}{\sqrt{h^{d}}}\einser_{\{e^{\imath 2\pi s} \mid s\in[0,\:h)\}^{d}}
            $
        for $h\in(0,\:1)$.
        Defining
            ${
                r_{N} \colon \ell^{2}(\Torus_{N}^{d}) \to L^{2}(\Torus^{d})
            }$
        via

            \begin{displaymath}
                r_{N} f
                    \coloneqq
                        \sum_{\mathbf{t} \in (\frac{1}{N}\{0,1,\ldots,N-1\})^{d}}
                            \brkt{f}{\delta_{(e^{\iunit 2\pi t_{i}})_{i=1}^{d}}}
                            \underbrace{
                                \Big(
                                    \prod_{i=1}^{d}
                                        U_{i}(t_{i})
                                \Big)
                                \:u_{1/N}
                            }_{
                                = \sqrt{N}^{d}
                                    \einser_{
                                        \prod_{i=1}^{d}
                                            \{
                                                e^{\imath 2\pi s}
                                                \mid
                                                s\in[t_{i},\:t_{i} + 1/N)
                                            \}
                                    }
                            }
            \end{displaymath}

        \continueparagraph
        for $f \in \ell^{2}(\Torus_{N}^{d})$,
        one can readily see that $r_{N}$ is an isometry.
        Moreover
            $
                \ran(r_{N})
                = \linspann(\{
                    (
                        \prod_{i=1}^{d}
                        U_{i}(t_{i})
                    )
                    \:u_{1/N}
                    \mid
                    \mathbf{t}\in(\frac{1}{N}\integers)^{d}
                \})
            $
        is clearly closed under $U_{i}(\tfrac{k}{N})$
        for $k\in\integers$, $i\in\{1,2,\ldots,d\}$.
        Clearly,
            $P_{i}(\frac{k}{N})u_{1/N} = u_{1/N}$,
        and relying on this observation as well as
        the relations in
            \eqcref{eq:bscr:sig:article-interpolation-raj-dahya},
        one readily derives that
            $\ran(r_{N})$
        is closed under $P_{i}(\frac{k}{N})$
        for $k\in\integers$, $i\in\{1,2,\ldots,d\}$.

        By construction of the interpolants
        (see \eqcref{eq:bs-dilation-family:sig:article-interpolation-raj-dahya}),
        it follows that
            $\ran(r_{N}) \otimes \HilbertRaum$
        is closed under $T_{i}(t)$
        for $t\in\frac{1}{N}\naturalsZero$, $i\in\{1,2,\ldots,d\}$.
        Equivalently stated, it holds that
            $
                \Big(
                    \prod_{i=1}^{d}
                        T_{i}(t_{i})
                \Big)
                \: (r_{N} \otimes \onematrix)
                = (r_{N} \otimes \onematrix) \: T^{(N)}(\mathbf{t})
            $
        for all $\mathbf{t} \in (\frac{1}{N}\naturalsZero)^{d}$,
        and for some function
            ${
                T^{(N)}
                    \colon
                    (\frac{1}{N}\naturalsZero)^{d}
                    \to
                    \BoundedOps{\ell^{2}(\Torus_{N}^{d}) \otimes \HilbertRaum}
            }$.
        Applying this expression repeatedly, one can verify that $T^{(N)}$ is a homomorphism.
        The interpolation properties of the $T_{i}$
        further yields
            $
                T^{(N)}(0,0,\ldots,\underset{i}{n},\ldots,0)
                = (r_{N} \otimes \onematrix)^{\ast}T_{i}(n)(r_{N} \otimes \onematrix)
                = (r_{N} \otimes \onematrix)^{\ast}(\onematrix \otimes S_{i}^{n})(r_{N} \otimes \onematrix)
                = \onematrix \otimes S_{i}^{n}
                = \onematrix \otimes \breve{T}_{i}(n)
            $
        for $n\in\naturalsZero$, $i\in\{1,2,\ldots,d\}$.
    \end{proof}

\begin{prop}
\makelabel{prop:discretised-interpolation-explicit:sig:article-interpolation-raj-dahya}
    The discretisation $T^{(N)}$ in \Cref{prop:discretised-interpolation:sig:article-interpolation-raj-dahya}
    is concretely given by

        \begin{restoremargins}
        \begin{equation}
        \label{eq:discr-interpolation:sig:article-interpolation-raj-dahya}
            T^{(N)}(\mathbf{t})\:(f \otimes \xi)
            =   \sum_{\mathbf{t}^{\prime} \in (\frac{1}{N}\{0,1,\ldots,N-1\})^{d}}
                    \brkt{f}{\delta_{(e^{\iunit 2\pi t^{\prime}_{i}})_{i=1}^{d}}}
                    \delta_{(e^{\iunit 2\pi (t_{i} + t^{\prime}_{i})})_{i=1}^{d}}
                    \otimes
                    \Big(
                        \prod_{i=1}^{d}
                            S_{i}^{\kappa(t_{i},t^{\prime}_{i})}
                    \Big)
                    \xi
        \end{equation}
        \end{restoremargins}

    \continueparagraph
    for
        $\mathbf{t}\in(\frac{1}{N}\naturalsZero)^{d}$,
        $f \in \ell^{2}(\Torus_{N}^{d})$,
        and $\xi \in \HilbertRaum$,
    where
        $\kappa(t,t^{\prime}) \coloneqq \floor{t} + \einser_{\fractional{t} + \fractional{t^{\prime}} \geq 1}$
    for $t,t^{\prime}\in\reals$.
\end{prop}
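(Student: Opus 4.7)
The plan is to exploit the defining intertwining relation from \Cref{prop:discretised-interpolation:sig:article-interpolation-raj-dahya}: since $\ran(r_{N}) \otimes \HilbertRaum$ is invariant under each $T_{i}(t_{i})$ for $t_{i}\in\frac{1}{N}\naturalsZero$ and since $r_{N}$ is an isometry, the equation $(r_{N} \otimes \onematrix)\:T^{(N)}(\mathbf{t}) = \big(\prod_{i=1}^{d} T_{i}(t_{i})\big)(r_{N} \otimes \onematrix)$ can be left-multiplied by $(r_{N}\otimes\onematrix)^{\ast}$ to obtain $T^{(N)}(\mathbf{t}) = (r_{N}\otimes\onematrix)^{\ast}\big(\prod_{i=1}^{d}T_{i}(t_{i})\big)(r_{N}\otimes\onematrix)$. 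To prove \eqcref{eq:discr-interpolation:sig:article-interpolation-raj-dahya} it therefore suffices to evaluate the right-hand side on simple tensors $f \otimes \xi$ and read off the coefficients in the ONB $\{\delta_{(e^{\iunit 2\pi t^{\prime}_{i}})_{i=1}^{d}}\}_{\mathbf{t}^{\prime}}$ of $\ell^{2}(\Torus_{N}^{d})$.

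First I would expand $f = \sum_{\mathbf{t}^{\prime}}\brkt{f}{\delta_{(e^{\iunit 2\pi t^{\prime}_{i}})_{i}}}\delta_{(e^{\iunit 2\pi t^{\prime}_{i}})_{i}}$ and apply $r_{N}$ using its explicit definition from the previous proof, giving $r_{N}\delta_{(e^{\iunit 2\pi t^{\prime}_{i}})_{i}} = \sqrt{N}^{d}\einser_{\prod_{i}[t^{\prime}_{i},\:t^{\prime}_{i}+1/N)}$. Then the main computation is the action of a single factor $T_{i}(t_{i}) = (U_{i}(t_{i})\otimes\onematrix)(P_{i}(t_{i})\otimes S_{i}^{\floor{t_{i}}} + (\onematrix-P_{i}(t_{i}))\otimes S_{i}^{\floor{t_{i}}+1})$ on $\einser_{[t^{\prime}_{i},\:t^{\prime}_{i}+1/N)}$. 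Since both $t^{\prime}_{i}$ and $\fractional{t_{i}}$ lie in $\frac{1}{N}\{0,1,\ldots,N-1\}$, the width-$1/N$ interval $[t^{\prime}_{i},\:t^{\prime}_{i}+1/N)$ is \emph{either} entirely inside $[0,\:1-\fractional{t_{i}})$ (exactly when $\fractional{t_{i}}+\fractional{t^{\prime}_{i}} < 1$) \emph{or} entirely outside it (when $\fractional{t_{i}}+\fractional{t^{\prime}_{i}} \geq 1$). Consequently $P_{i}(t_{i})\einser_{[t^{\prime}_{i},\:t^{\prime}_{i}+1/N)}$ is either $\einser_{[t^{\prime}_{i},\:t^{\prime}_{i}+1/N)}$ or $0$, and the Koopman shift then sends it to $\einser_{[\fractional{t_{i}+t^{\prime}_{i}},\:\fractional{t_{i}+t^{\prime}_{i}}+1/N)}$. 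On the $\HilbertRaum$ side this picks out exactly $S_{i}^{\floor{t_{i}} + \einser_{\fractional{t_{i}}+\fractional{t^{\prime}_{i}}\geq 1}}\:\xi = S_{i}^{\kappa(t_{i},t^{\prime}_{i})}\:\xi$.

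Because the factors act on independent $\Torus$-coordinates and the $S_{i}$ commute (which is the whole point of \Cref{lemm:bhat-skeide:multi:sig:article-interpolation-raj-dahya}), taking the product over $i$ is straightforward and produces $\big(\prod_{i=1}^{d}T_{i}(t_{i})\big)(\sqrt{N}^{d}\einser_{\prod_{i}[t^{\prime}_{i},\:t^{\prime}_{i}+1/N)}\otimes\xi) = r_{N}\delta_{(e^{\iunit 2\pi(t_{i}+t^{\prime}_{i})})_{i}}\otimes\prod_{i=1}^{d}S_{i}^{\kappa(t_{i},t^{\prime}_{i})}\:\xi$. Summing over $\mathbf{t}^{\prime}$ and applying $(r_{N}\otimes\onematrix)^{\ast}$ using $r_{N}^{\ast}r_{N} = \onematrix$ yields exactly \eqcref{eq:discr-interpolation:sig:article-interpolation-raj-dahya}. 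The only delicate point is the case analysis for $P_{i}(t_{i})$, and this is precisely where the grid choice pays off: because $t^{\prime}_{i}$ and $\fractional{t_{i}}$ both lie on $\frac{1}{N}\integers$, the indicator intervals line up perfectly with the support of $p_{t_{i}}$ and no partial overlap can occur — which is exactly what is needed for the clean binary dependence on $\einser_{\fractional{t_{i}}+\fractional{t^{\prime}_{i}}\geq 1}$ in $\kappa$.
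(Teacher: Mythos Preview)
Your proposal is correct and follows essentially the same approach as the paper: reduce by linearity to a single basis vector $\delta_{(e^{\iunit 2\pi t'_i})_i}$, push through $r_N$, and exploit the grid alignment to see that each projection $P_i(t_i)$ either preserves or annihilates the width-$1/N$ indicator, yielding the exponent $\kappa(t_i,t'_i)$. The only organisational difference is that the paper first invokes the commutation relation \eqcref{eq:bscr:sig:article-interpolation-raj-dahya} to move $P_i(t_i)$ past $U_i(t'_i)$ and then evaluates the resulting $Q_i(t_i,t'_i)$ on $u_{1/N}$, whereas you evaluate $P_i(t_i)$ directly on the translated indicator $\einser_{[t'_i,\,t'_i+1/N)}$; these are the same computation, and your version is arguably a touch more transparent since it avoids the detour through BSCR.
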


    \begin{proof}
        Without loss of generality, assume
            $
                f = \delta_{
                        (e^{\iunit 2\pi t^{\prime}_{i}})_{i=1}^{d}
                    }
                = r_{N}^{\ast}\Big(\prod_{i=1}^{d}U_{i}(t^{\prime}_{i})\Big)\:u_{1/N}
            $
        for some $\mathbf{t}^{\prime} \in (\frac{1}{N}\{0,1,\ldots,N-1\})^{d}$.
        For each $i\in\{1,2,\ldots,d\}$
        by \eqcref{eq:bscr:sig:article-interpolation-raj-dahya}
        one has
            $
                P_{i}(t_{i})U_{i}(t^{\prime}_{i})
                = U_{i}(t^{\prime}_{i})Q_{i}(t_{i},t^{\prime}_{i})
            $,
        where
            $Q_{i}(t_{i},t^{\prime}_{i}) = \onematrix - (P_{i}(t^{\prime}_{i}) - P(t^{\prime}_{i} + t_{i}))$
        if $\fractional{t_{i}} + \fractional{t^{\prime}_{i}}  < 1$,
        otherwise $Q_{i}(t_{i},t^{\prime}_{i}) = P_{i}(t^{\prime}_{i}) - P(t^{\prime}_{i} + t_{i})$.
        Observe that
            $P_{i}(\frac{k}{N})u_{1/N} = u_{1/N}$
        for $k\in\integers$.
        So
            $Q_{i}(t_{i},t^{\prime}_{i})u_{1/N} = u_{1/N}$
        if $\fractional{t_{i}} + \fractional{t^{\prime}_{i}} < 1$,
        otherwise
            $Q_{i}(t_{i},t^{\prime}_{i})u_{1/N} = \zerovector$.
        Since $U_{i},P_{j}$ (and thus $U_{i},Q_{j}$)
        commute for $i \neq j$
        induction yields

            \begin{longeqnarray}
                &&(r_{N} \otimes \onematrix)\:T^{(N)}(\mathbf{t})\:(f \otimes \xi)\\
                &\eqcrefoverset{eq:discr-interpolation-dilation:sig:article-interpolation-raj-dahya}{=}
                    &\Big(
                        \prod_{i=1}^{d}
                            T_{i}(t_{i})
                    \Big)
                    \:(r_{N} \otimes \onematrix)
                    \:(f \otimes \xi)\\
                &= &\prod_{i=1}^{d}
                        \begin{displayarray}[t]{0l}
                        \Big(
                            U_{i}(t_{i})P_{i}(t_{i}) \otimes S_{i}^{\floor{t_{i}}}\\
                            \:+\: U_{i}(t_{i})(\onematrix - P_{i}(t_{i}))
                                \otimes S_{i}^{\floor{t_{i}} + 1}
                        \Big)
                        \:\Big(\Big(\prod_{i=1}^{d}U_{i}(t^{\prime}_{i})\Big)u_{1/N} \otimes \xi\Big)
                        \end{displayarray}\\
                &\textoverset{ind}{=}
                    &\prod_{i=1}^{d}
                        \begin{displayarray}[t]{0l}
                        \Big(
                            U_{i}(t_{i})P_{i}(t_{i})U_{i}(t^{\prime}_{i}) \otimes S_{i}^{\floor{t_{i}}}\\
                            \:+\: U_{i}(t_{i})(\onematrix - P_{i}(t_{i}))U_{i}(t^{\prime}_{i})
                                \otimes S_{i}^{\floor{t_{i}} + 1}
                        \Big)
                        \:(u_{1/N} \otimes \xi)
                        \end{displayarray}\\
                &\eqcrefoverset{eq:bscr:sig:article-interpolation-raj-dahya}{=}
                    &\prod_{i=1}^{d}
                        \begin{displayarray}[t]{0l}
                        \Big(
                            U_{i}(t_{i} + t^{\prime}_{i})Q_{i}(t_{i},t^{\prime}_{i}) \otimes S_{i}^{\floor{t_{i}}}\\
                            \:+\:U_{i}(t_{i} + t^{\prime}_{i})(\onematrix - Q_{i}(t_{i},t^{\prime}_{i}))
                                \otimes S_{i}^{\floor{t_{i}} + 1}
                        \Big)
                        \:(u_{1/N} \otimes \xi)
                        \end{displayarray}\\
                &\textoverset{ind}{=}
                    &\Big(
                        \prod_{i=1}^{d}
                            \begin{cases}
                                U_{i}(t_{i} + t^{\prime}_{i}) \otimes S_{i}^{\floor{t_{i}}}
                                    &\colon &\fractional{t_{i}} + \fractional{t^{\prime}_{i}} < 1\\
                                U_{i}(t_{i} + t^{\prime}_{i}) \otimes S_{i}^{\floor{t_{i}} + 1}
                                    &\colon &\fractional{t_{i}} + \fractional{t^{\prime}_{i}} \geq 1\\
                            \end{cases}
                    \Big)
                    \:(u_{1/N} \otimes \xi).\\
            \end{longeqnarray}

        \continueparagraph
        Thus

            \begin{longeqnarray}
                T^{(N)}(\mathbf{t})\:(f \otimes \xi)
                = r_{N}^{\ast}\Big(\prod_{i=1}^{d}U_{i}(t_{i} + t^{\prime}_{i})\Big)\:u_{1/N}
                    \otimes
                    \Big(\prod_{i=1}^{d}S_{i}^{\kappa(t_{i},t^{\prime}_{i})}\Big)\:\xi,\\
            \end{longeqnarray}

        \continueparagraph
        and since the isometry satisfies
            $
                r_{N}\delta_{(e^{\iunit 2\pi(t_{i} + t^{\prime}_{i})})_{i=1}^{d}}
                = \Big(\prod_{i=1}^{d}U_{i}(t_{i} + t^{\prime}_{i})\Big)\:u_{1/N}
            $,
        \eqcref{eq:discr-interpolation:sig:article-interpolation-raj-dahya} holds.
    \end{proof}

Note that if $\dim(\HilbertRaum)$ is finite,
then the dimension of the Hilbert space,
on which the discretisation $T^{(N)}$ is defined,
satisfies
    $
        \dim(\ell^{2}(\Torus_{N}^{d}) \otimes \HilbertRaum)
        = N^{d}\dim(\HilbertRaum)
        < \infty
    $.
Furthermore, the expression in \eqcref{eq:discr-interpolation-dilation:sig:article-interpolation-raj-dahya}
entails that the generalised Bhat--Skeide interpolation
is a common contraction-valued dilation of its discretisations.

Finally, it is worth noting that
    the generalised Bhat--Skeide interpolation
    and
    its discretisations
are contraction-valued dilations of
intuitively defined piecewise multi-linear
extensions of the $d$-parameter discrete semigroup
    ${S \colon \naturalsZero^{d} \to \BoundedOps{\HilbertRaum}}$
corresponding to $\{S_{i}\}_{i=1}^{d}$
(\cf \S{}\ref{sec:introduction:definitions:sig:article-interpolation-raj-dahya}).
The following result demonstrates this precisely:

\begin{prop}
\makelabel{prop:multilinear-interpolation:sig:article-interpolation-raj-dahya}
    The Bhat--Skeide interpolation $\{T_{i}\}_{i=1}^{d}$
    of $\{S_{i}\}_{i=1}^{d}$ satisfies

        \begin{restoremargins}
        \begin{equation}
        \label{eq:multilinear-interpolation-bhat-skeide:sig:article-interpolation-raj-dahya}
            v^{\ast}
            \:\Big(\prod_{i=1}^{N}T_{i}(t_{i})\Big)
            \:v
                = \prod_{i=1}^{d}
                    \Big(
                        (1-\fractional{t_{i}})
                        S_{i}^{\floor{t_{i}}}
                        +
                        \fractional{t_{i}}
                        S_{i}^{\floor{t_{i}} + 1}
                    \Big)
        \end{equation}
        \end{restoremargins}

    \continueparagraph
    for all $\mathbf{t}\in\realsNonNeg^{d}$,
    where $v \in \BoundedOps{\HilbertRaum}{L^{2}(\Torus^{d}) \otimes \HilbertRaum}$
    is the isometric embedding defined by
        $v \xi = \einser \otimes \xi$
    for $\xi\in\HilbertRaum$,
    where $\einser$ is the constant function equal to $1$ everywhere on $\Torus^{d}$.

    Similarly, for each $N\in\naturals$,
    the discretisation $T^{(N)}$ in \Cref{prop:discretised-interpolation:sig:article-interpolation-raj-dahya}
    satisfies

        \begin{restoremargins}
        \begin{equation}
        \label{eq:multilinear-interpolation-discr:sig:article-interpolation-raj-dahya}
            v_{N}^{\ast}\:T^{(N)}(\mathbf{t})\:v_{N}
                =
                    \prod_{i=1}^{d}
                        \Big(
                            (1-\fractional{t_{i}})
                            S_{i}^{\floor{t_{i}}}
                            +
                            \fractional{t_{i}}
                            S_{i}^{\floor{t_{i}} + 1}
                        \Big)
        \end{equation}
        \end{restoremargins}

    \continueparagraph
    for all $\mathbf{t}\in(\frac{1}{N}\naturalsZero)^{d}$,
    where $v_{N} \in \BoundedOps{\HilbertRaum}{\ell^{2}(\Torus_{N}^{d}) \otimes \HilbertRaum}$
    is the isometric embedding defined by
        $v_{N}\xi = \frac{1}{\sqrt{N^{d}}}\onevector \otimes \xi$
    for $\xi\in\HilbertRaum$,
    where
        $\onevector \coloneqq \sum_{\mathbf{z} \in \Torus_{N}^{d}}\delta_{\mathbf{z}}$.
\end{prop}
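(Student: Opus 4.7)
The plan is to compute both equalities directly from the explicit formulas \eqref{eq:bs-dilation-family:sig:article-interpolation-raj-dahya} and \eqref{eq:discr-interpolation:sig:article-interpolation-raj-dahya}, leveraging two facts: each $U_i(t_i)$ is the Koopman operator of a measure-preserving bijection and hence fixes the constant function $\einser$, and the families of operators indexed by distinct $i$ act on independent coordinates of the auxiliary space, so that their contributions factorize.

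For \eqref{eq:multilinear-interpolation-bhat-skeide:sig:article-interpolation-raj-dahya}, I first expand $\prod_{i=1}^d T_i(t_i)$ via \eqref{eq:bs-dilation-family:sig:article-interpolation-raj-dahya}, obtaining a sum indexed by $\boldsymbol{\epsilon} \in \{0,1\}^d$, where $\epsilon_i$ selects the $P_i(t_i)$-part (if $\epsilon_i = 0$) or the $(\onematrix - P_i(t_i))$-part (if $\epsilon_i = 1$) of the $i$th factor. Since operators for different $i$ commute, this rewrites as
\begin{equation*}
\sum_{\boldsymbol{\epsilon}}
    \Bigl(\prod_{i=1}^d U_i(t_i) E_i^{(\epsilon_i)}(t_i)\Bigr)
    \otimes \prod_{i=1}^d S_i^{\floor{t_i} + \epsilon_i},
\end{equation*}
with $E_i^{(0)}(t_i) = P_i(t_i)$ and $E_i^{(1)}(t_i) = \onematrix - P_i(t_i)$. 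Applying $v^*(\cdot)v$ and using both $v^*(f \otimes \eta) = \langle f, \einser\rangle \eta$ and $U_i(t_i)^* \einser = \einser$, the auxiliary coefficient of $\prod_i S_i^{\floor{t_i} + \epsilon_i}$ collapses to $\langle \prod_i E_i^{(\epsilon_i)}(t_i)\einser, \einser\rangle$. Since each $E_i^{(\epsilon_i)}(t_i)$ is multiplication by a function depending only on the $i$th coordinate of $\Torus^d$, this inner product factorizes over coordinates and evaluates to $\prod_i a_i^{(\epsilon_i)}$ with $a_i^{(0)} = 1 - \fractional{t_i}$ and $a_i^{(1)} = \fractional{t_i}$. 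The commutativity of the $S_i$ then regroups the sum into the desired product $\prod_{i=1}^d [(1-\fractional{t_i}) S_i^{\floor{t_i}} + \fractional{t_i} S_i^{\floor{t_i}+1}]$.

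For \eqref{eq:multilinear-interpolation-discr:sig:article-interpolation-raj-dahya}, I substitute $v_N\xi = \tfrac{1}{\sqrt{N^d}}\onevector \otimes \xi$ into \eqref{eq:discr-interpolation:sig:article-interpolation-raj-dahya} and use $\langle \onevector, \delta_{\mathbf{w}}\rangle = 1$ for every $\mathbf{w} \in \Torus_N^d$. Both Dirac-to-constant pairings --- the one inside $T^{(N)}(\mathbf{t})v_N$ and the one arising under the outer $v_N^*$ --- thereby collapse, yielding
\begin{equation*}
v_N^* T^{(N)}(\mathbf{t}) v_N = \frac{1}{N^d}\sum_{\mathbf{t}' \in (\frac{1}{N}\{0,\ldots,N-1\})^d} \prod_{i=1}^d S_i^{\kappa(t_i, t_i')}.
\end{equation*}
For $t_i \in \tfrac{1}{N}\naturalsZero$ one has $\fractional{t_i} \in \{0, \tfrac{1}{N}, \ldots, \tfrac{N-1}{N}\}$, and a direct count shows that among the $N$ possible values $t_i' \in \{0, \tfrac{1}{N}, \ldots, \tfrac{N-1}{N}\}$, exactly $N(1-\fractional{t_i})$ satisfy $\kappa(t_i, t_i') = \floor{t_i}$ and the remaining $N\fractional{t_i}$ satisfy $\kappa(t_i, t_i') = \floor{t_i} + 1$. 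As the constraints across coordinates are independent, the sum factorizes coordinate-wise, and the commutativity of the $S_i$ produces the product claimed.

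No step presents a serious obstacle; the only care required is in the combinatorial $\{0,1\}^d$-expansion in the continuous case and the explicit fraction count in the discrete case.
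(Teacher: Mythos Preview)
Your proposal is correct, and your treatment of the discrete case \eqref{eq:multilinear-interpolation-discr:sig:article-interpolation-raj-dahya} is essentially identical to the paper's: both substitute $v_N\xi$ into \eqref{eq:discr-interpolation:sig:article-interpolation-raj-dahya}, collapse the Dirac pairings, and count how many $t_i' \in \tfrac{1}{N}\{0,\ldots,N-1\}$ give each value of $\kappa(t_i,t_i')$.

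Where you diverge is in the continuous case \eqref{eq:multilinear-interpolation-bhat-skeide:sig:article-interpolation-raj-dahya}. You attack it directly: expand the product $\prod_i T_i(t_i)$ as a $\{0,1\}^d$-indexed sum, use $U_i(t_i)^{\ast}\einser = \einser$ to strip off the Koopman operators, and then factorize $\langle \prod_i E_i^{(\epsilon_i)}(t_i)\einser,\einser\rangle$ over coordinates to read off the weights $1-\fractional{t_i}$ and $\fractional{t_i}$ as Haar measures of arcs. The paper instead \emph{derives} the continuous case from the discrete one: it observes that $(r_N \otimes \onematrix)\,v_N = v$, combines this with the dilation relation \eqref{eq:discr-interpolation-dilation:sig:article-interpolation-raj-dahya} to obtain $v^{\ast}\bigl(\prod_i T_i(t_i)\bigr)v = v_N^{\ast}\,T^{(N)}(\mathbf{t})\,v_N$ for $\mathbf{t}\in(\tfrac{1}{N}\naturalsZero)^d$, invokes the already-proved discrete formula, and then extends from the dense set $\bigcup_N (\tfrac{1}{N}\naturalsZero)^d$ to all of $\realsNonNeg^d$ by right-$\topSOT$-continuity of both sides. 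Your route is more self-contained and avoids the density/continuity step entirely; the paper's route, on the other hand, makes explicit the compatibility between the continuous interpolation and its discretisations, which is thematically in keeping with the surrounding section.
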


    \begin{proof}
        We first handle the discretised case.
        Let
            $N\in\naturals$,
            $\mathbf{t}\in(\frac{1}{N}\naturalsZero)^{d}$,
            and
            $\xi,\eta \in \HilbertRaum$
        be arbitrary.
        Since
            $\brkt{\onevector}{\delta_{\mathbf{z}}} = 1$
        for $\mathbf{z} \in \Torus_{N}^{d}$,
        \eqcref{eq:discr-interpolation:sig:article-interpolation-raj-dahya}
        yields
            $
                T^{(N)}(\mathbf{t})\:(\onevector \otimes \xi)
                = \sum_{\mathbf{t}^{\prime} \in (\frac{1}{N}\{0,1,\ldots,N-1\})^{d}}
                    \delta_{
                        (
                            e^{\iunit 2\pi (t_{i} + t^{\prime}_{i})}
                        )_{i=1}^{d}
                    }
                    \otimes
                    \Big(
                        \prod_{i=1}^{d}
                            S_{i}^{\kappa(t_{i},t^{\prime}_{i})}
                    \Big)
                    \:\xi
            $.
        For $i\in\{1,2,\ldots,d\}$
        the set of $t_{i}^{\prime} \in \frac{1}{N}\{0,1,\ldots,N-1\}$
        for which
            $\fractional{t_{i}^{\prime}} + \fractional{t_{i}} < 1$
        is given by
            $\frac{1}{N}\{0,1,\ldots,(1-\fractional{t_{i}}) \cdot N - 1\}$,
        which has exactly $(1 - \fractional{t_{i}}) \cdot N$ elements.%
        \footref{ft:pf:1:\beweislabel}
        Thus

            \begin{longeqnarray}
                &&\brkt{
                    v_{N}^{\ast}
                    \:T^{(N)}(\mathbf{t})
                    \:v_{N}\xi
                }{
                    \eta
                }\\
                    &= &\brkt{
                            T^{(N)}(\mathbf{t})\:(\frac{1}{\sqrt{N^{d}}}\onevector \otimes \xi)
                        }{
                            \frac{1}{\sqrt{N^{d}}}\onevector \otimes \eta
                        }\\
                    &= &\frac{1}{N^{d}}
                        \sum_{\mathbf{t}^{\prime} \in (\frac{1}{N}\{0,1,\ldots,N-1\})^{d}}
                            \underbrace{
                                \brkt{
                                    \delta_{
                                        (
                                            e^{\iunit 2\pi (t_{i} + t^{\prime}_{i})}
                                        )_{i=1}^{d}
                                    }
                                }{\onevector}
                            }_{=1}
                            \brktLong{
                                \Big(
                                    \prod_{i=1}^{d}
                                        S_{i}^{\kappa(t_{i},t^{\prime}_{i})}
                                \Big)
                                \:\xi
                            }{\eta}\\
                    &= &\brktLong{
                            \prod_{i=1}^{d}
                            \frac{1}{N}
                            \Big(
                                \sum_{t_{i}^{\prime} \in \frac{1}{N}\{0,1,\ldots,N-1\}}
                                    S_{i}^{\kappa(t_{i},t^{\prime}_{i})}
                            \Big)
                            \:\xi
                        }{\eta}\\
                    &= &\brktLong{
                        \prod_{i=1}^{d}
                            \Big(
                                \tfrac{N(1-\fractional{t_{i}})}{N}
                                S_{i}^{\floor{t_{i}}}
                                +
                                \tfrac{N\fractional{t_{i}}}{N}
                                S_{i}^{\floor{t_{i}} + 1}
                            \Big)
                        \:\xi
                    }{\eta},
            \end{longeqnarray}

        \continueparagraph
        where the final expression holds by the above counting argument.
        Since this holds for all $\xi,\eta\in\HilbertRaum$,
        \eqcref{eq:multilinear-interpolation-discr:sig:article-interpolation-raj-dahya} holds.

        Observe that
            $
                (r_{N} \otimes \onematrix)\:v_{N}\xi
                = \frac{1}{\sqrt{N}^{d}}r_{N} \onevector \otimes \xi
                = \einser \otimes \xi
                = v\xi
            $
        for each $\xi\in\HilbertRaum$, $N\in\naturals$.
        Thus

            \begin{longeqnarray}
                v^{\ast}\:\Big(\prod_{i=1}^{d}T_{i}(t_{i})\Big)\:v
                    &= &v_{N}^{\ast}(r_{N} \otimes \onematrix)^{\ast}
                        \:\Big(\prod_{i=1}^{d}T_{i}(t_{i})\Big)
                        \:(r_{N} \otimes \onematrix)
                        v_{N}\\
                    &\eqcrefoverset{eq:discr-interpolation-dilation:sig:article-interpolation-raj-dahya}{=}
                        &v_{N}^{\ast}
                        \:T^{(N)}(\mathbf{t})
                        \:v_{N}\\
                    &\eqcrefoverset{eq:multilinear-interpolation-discr:sig:article-interpolation-raj-dahya}{=}
                        &\Big(
                            (1-\fractional{t_{i}})
                            S_{i}^{\floor{t_{i}}}
                            +
                            \fractional{t_{i}}
                            S_{i}^{\floor{t_{i}} + 1}
                        \Big),
            \end{longeqnarray}

        \continueparagraph
        for each
            $\mathbf{t} \in (\frac{1}{N}\naturalsZero)^{d}$,
            $N\in\naturals$.
        Hence \eqcref{eq:multilinear-interpolation-bhat-skeide:sig:article-interpolation-raj-dahya}
        holds for all $\mathbf{t} \in (\rationals \cap [0,\:\infty))^{d}$.
        By right-$\topSOT$-continuity of both sides of this expression
        (in $t_{i}$ for each $i$)
        the validity of
            \eqcref{eq:multilinear-interpolation-bhat-skeide:sig:article-interpolation-raj-dahya}
        extends to all $\mathbf{t}\in\realsNonNeg^{d}$.
    \end{proof}

\footnotetext[ft:pf:1:\beweislabel]{%
    Since $t_{i} \in \frac{1}{N}\naturalsZero$,
    one has
        $\fractional{t_{i}} \in \{0,\tfrac{1}{N},\tfrac{2}{N},\ldots,1-\tfrac{1}{N}\}$
    and thus
        $(1-\fractional{t_{i}}) \cdot N \in \{1,2,\ldots,N\}$.
}

\begin{rem}
    Let $\HilbertRaum$ be a Hilbert space and $d,N\in\naturals$.
    By \Cref{prop:discretised-interpolation:sig:article-interpolation-raj-dahya},
    the auxiliary space
        $\HilbertRaum^{\prime} = \ell^{2}(\Torus_{N}^{d})$,
    which has dimension $N^{d}$,
    is sufficient to guarantee for every commuting family
        $\{S_{i}\}_{i=1}^{d} \subseteq \BoundedOps{\HilbertRaum}$
    that
        $\{\onematrix \otimes S_{i}\}_{i=1}^{d} \subseteq \BoundedOps{\HilbertRaum^{\prime} \otimes \HilbertRaum}$
    can be simultaneously embedded into a commuting family
        $\{T_{i}\}_{i=1}^{d}$
    of discrete semigroups
        on $\HilbertRaum^{\prime} \otimes \HilbertRaum$
        over $(\frac{1}{N}\naturalsZero,+,0)$.
    Analogous to \S{}\ref{sec:interpolation:embed:sig:article-interpolation-raj-dahya},
    it would be useful to know whether the dimension of such a space is optimal,
    at least in the case of $d=1$ and $\dim(\HilbertRaum) < \aleph_{0}$.
\end{rem}

\begin{rem}
    It would be interesting to know whether the discretised interpolation
    presented in this subsection
    can be applied in the numerical analysis of stochastic partial differential equations
    or in resampling methods in signal processing.
\end{rem}




\subsubsection[Approximations via scaled Bhat--Skeide interpolations]{Approximations via scaled Bhat--Skeide interpolations}
\label{sec:interpolation:application-recovery:sig:article-interpolation-raj-dahya}

\firstparagraph
The methods in the previous section do not in general
recover the original multi-parameter semigroup.
However, by using ever finer interpolations,
reconstruction is possible up to arbitrary approximation.

Throughout this subsection we use the following notation:
Given a projection $p$ on a Hilbert space,
we denote
    $p^{\perp^{0}} \coloneqq p$
    and
    $p^{\perp^{1}} \coloneqq \onematrix - p$.
Given $d\in\naturals$
and a commuting $d$-tuple
    $\{S_{i}\}_{i=1}^{d}$
    of contractions on a Hilbert space $\HilbertRaum$
we let
    ${
        [\{S_{i}\}_{i=1}^{d}]_{\bhatskeide}
        \colon
        \realsNonNeg^{d} \to \OpSpaceC{L^{2}(\Torus^{d}) \otimes \HilbertRaum}
    }$
denote the $\topSOT$-continuous contractive homomorphism
over $\realsNonNeg^{d}$ on $\HilbertRaum$
corresponding to the generalised Bhat--Skeide interpolation
of $\{S_{i}\}_{i=1}^{d}$
as constructed in the proof of
    \Cref{lemm:bhat-skeide:multi:sig:article-interpolation-raj-dahya}.

Consider now a commuting family
    $\{T_{i}\}_{i=1}^{d}$
of contractive $\Cnought$-semigroups
on an infinite-dimensional Hilbert space $\HilbertRaum$,
or correspondingly an $\topSOT$-continuous homomorphism
    $T \in \SpHomCs(\realsNonNeg^{d},\HilbertRaum)$
(\cf the discussion in \S{}\ref{sec:introduction:definitions:sig:article-interpolation-raj-dahya}).
For
    $\eps > 0$
    and
    any unitary operator
    ${w \colon L^{2}(\Torus^{d}) \otimes \HilbertRaum \to \HilbertRaum}$
we construct
    $
        [T]_{\bhatskeide}^{\eps,w}
        \colon
            \realsNonNeg^{d}
            \to
            \OpSpaceC{\HilbertRaum}
    $
via

    \begin{restoremargins}
    \begin{equation}
    \label{eq:signal-recovery:sig:article-interpolation-raj-dahya}
        \begin{displayarray}[m]{rcl}
            [T]_{\bhatskeide}^{\eps,w}(\mathbf{t})
                &\coloneqq
                    &\adjoint_{w}[\{T_{i}(\eps)\}_{i=1}^{d}]_{\bhatskeide}(\eps^{-1}\mathbf{t})\\
                &= &w\Big(
                    \prod_{i=1}^{d}
                    \sum_{e \in \{0,1\}}
                        U_{i}(t_{i})
                        P_{i}(t_{i})^{\perp^{e_{i}}}
                        \otimes
                        T_{i}(\eps)^{\floor{\tfrac{t_{i}}{\eps}} + e_{i}}
                    \Big)w^{\ast}\\
                &= &\sum_{\mathbf{e} \in \{0,1\}^{d}}
                    w\Big(
                        \Big(
                            \prod_{i=1}^{d}
                                U_{i}(t_{i})
                                P_{i}(t_{i})^{\perp^{e_{i}}}
                        \Big)
                        \otimes
                        T(\mathbf{t}^{(\mathbf{e})})
                    \Big)w^{\ast}\\
        \end{displayarray}
    \end{equation}
    \end{restoremargins}

\continueparagraph
for $\mathbf{t} \in \realsNonNeg^{d}$,
where
    ${
        \mathbf{t}^{(\mathbf{e})}
        \coloneqq
        ((\floor{\tfrac{t_{i}}{\eps}} + e_{i})\eps)_{i=1}^{d}
    }$
    for each $\mathbf{e} \in \{0,1\}^{d}$
and where the
    unitaries $U_{i}(t_{i})$
    and projections $P_{i}(t_{i})$
are the operators on $L^{2}(\Torus^{d})$
defined as in the proof of \Cref{lemm:bhat-skeide:multi:sig:article-interpolation-raj-dahya}.
It is easy to see that
    $[T]_{\bhatskeide}^{\eps,w}$
defines an $\topSOT$-continuous contractive homomorphism
over $\realsNonNeg^{d}$ on $\HilbertRaum$,
\idest corresponds to a commuting family
of $d$ contractive $\Cnought$-semigroups on $\HilbertRaum$.
Note that as per the discussion at the end of
\S{}\ref{sec:interpolation:single:sig:article-interpolation-raj-dahya},
these semigroups are furthermore unitary if (and only if)
each of the $T_{i}$ are.

The construction in \eqcref{eq:signal-recovery:sig:article-interpolation-raj-dahya}
can be understood as being
unitarily similar to a Bhat--Skeide interpolation
between the discretely spaced values
    $\{
        \prod_{i=1}^{d}T_{i}(k_{i}\eps) \mid \mathbf{k} \in \naturalsZero^{d}
    \}$
of the family $\{T_{i}\}_{i=1}^{d}$.
For ever smaller values of $\eps$
and appropriately chosen unitary operators $w$,
one may ask whether these $d$-parameter semigroups
approximate $\{T_{i}\}_{i=1}^{d}$.
This is indeed the case.

\begin{prop}
\makelabel{prop:signal-recovery:approx:sig:article-interpolation-raj-dahya}
    Let $d \in \naturals$ and $\HilbertRaum$ be an infinite-dimensional Hilbert space.
    For each $T \in \SpHomCs(\realsNonNeg^{d},\HilbertRaum)$
    there exists a net
        $(T^{(\alpha)})_{\alpha \in \Lambda} \subseteq \SpHomCs(\realsNonNeg^{d},\HilbertRaum)$
    of homomorphisms unitarily similar to time-scaled Bhat--Skeide interpolations
    such that
        ${T^{(\alpha)} \underset{\alpha}{\overset{\tinytoplocWOT}{\longrightarrow}} T}$.
\end{prop}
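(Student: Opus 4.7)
The plan is to index the approximating net by the directed set $\Lambda$ of triples $\alpha = (F,K,n)$, where $F \subseteq \HilbertRaum$ is finite, $K \subseteq \realsNonNeg^{d}$ is compact, and $n \in \naturals$, ordered by $(F,K,n) \leq (F',K',n')$ iff $F \subseteq F'$, $K \subseteq K'$, and $n \leq n'$. For each $\alpha \in \Lambda$, set $\eps_{\alpha} \coloneqq 1/n$. Since $\HilbertRaum$ is infinite-dimensional, cardinal arithmetic (as in \Cref{prop:sim-embeddable-tuples-pw-dense:sig:article-interpolation-raj-dahya}) gives $\dim(L^{2}(\Torus^{d}) \otimes \HilbertRaum) = \dim(\HilbertRaum)$, so as $F$ is finite one may choose a unitary operator $w_{\alpha} \colon L^{2}(\Torus^{d}) \otimes \HilbertRaum \to \HilbertRaum$ with $w_{\alpha}(\einser \otimes \xi) = \xi$ for all $\xi \in F$. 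Then set $T^{(\alpha)} \coloneqq [T]_{\bhatskeide}^{\eps_{\alpha},w_{\alpha}}$; by the discussion immediately after \eqref{eq:signal-recovery:sig:article-interpolation-raj-dahya}, each $T^{(\alpha)}$ lies in $\SpHomCs(\realsNonNeg^{d},\HilbertRaum)$ and is unitarily similar to a time-scaled Bhat--Skeide interpolation.

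To verify $T^{(\alpha)} \overset{\tinytoplocWOT}{\longrightarrow} T$, fix $\xi,\eta \in \HilbertRaum$ and a compact set $K_{0} \subseteq \realsNonNeg^{d}$. For all $\alpha = (F,K,n) \in \Lambda$ with $\{\xi,\eta\} \subseteq F$ and $K_{0} \subseteq K$, the defining property of $w_{\alpha}$ gives $w_{\alpha}^{\ast}\xi = \einser \otimes \xi$ and $w_{\alpha}^{\ast}\eta = \einser \otimes \eta$. Rewriting $T^{(\alpha)}(\mathbf{t})$ as the conjugation by $w_{\alpha}$ of the time-rescaled Bhat--Skeide interpolation of the tuple $\{T_{i}(\eps_{\alpha})\}_{i=1}^{d}$, and applying the averaging identity \eqref{eq:multilinear-interpolation-bhat-skeide:sig:article-interpolation-raj-dahya} of \Cref{prop:multilinear-interpolation:sig:article-interpolation-raj-dahya} (with embedding $v\xi = \einser \otimes \xi$, time argument $\eps_{\alpha}^{-1}\mathbf{t}$, and the semigroup identity $T_{i}(\eps_{\alpha})^{k} = T_{i}(k\eps_{\alpha})$), yields the explicit expression
\begin{equation*}
\langle T^{(\alpha)}(\mathbf{t})\xi,\:\eta\rangle
= \Big\langle
    \prod_{i=1}^{d}\Big(
        (1-\fractional{\eps_{\alpha}^{-1}t_{i}})\:T_{i}(\kappa_{\alpha,i})
        + \fractional{\eps_{\alpha}^{-1}t_{i}}\:T_{i}(\kappa_{\alpha,i} + \eps_{\alpha})
    \Big)\,\xi,\:\eta
\Big\rangle,
\end{equation*}
where $\kappa_{\alpha,i} \coloneqq \eps_{\alpha}\floor{\eps_{\alpha}^{-1}t_{i}} \in [t_{i} - \eps_{\alpha},\:t_{i}]$.

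Since $T \in \SpHomCs(\realsNonNeg^{d},\HilbertRaum)$ is jointly $\topSOT$-continuous, the orbit map $\mathbf{s} \mapsto T(\mathbf{s})\xi$ is uniformly continuous on the compact enlargement $K_{0}' \coloneqq K_{0} + [0,1]^{d}$. Combining this with the uniform bound $\norm{T(\cdot)} \leq 1$ and a standard telescoping estimate to compare products of $d$ contractions, one shows that the product of convex combinations above converges in norm to $\prod_{i=1}^{d}T_{i}(t_{i})\xi = T(\mathbf{t})\xi$, uniformly for $\mathbf{t} \in K_{0}$ as $\alpha$ runs over $\Lambda$ (in particular as $n \to \infty$, forcing $\eps_{\alpha} \to 0$). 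Taking inner products against $\eta$ yields the required uniform $\topWOT$-convergence on $K_{0}$, which establishes the $\toplocWOT$-convergence $T^{(\alpha)} \to T$.

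The main obstacle is the bookkeeping in the first step of the convergence argument: translating the action of $T^{(\alpha)}(\mathbf{t})$ on the chosen vector pair into the explicit averaging formula. This requires carefully invoking the $v\xi = \einser \otimes \xi$ embedding from \Cref{prop:multilinear-interpolation:sig:article-interpolation-raj-dahya} \emph{after} the $w_{\alpha}$-conjugation, exploiting precisely the fact that $w_{\alpha}$ was chosen to pin the constant-function slot to the identity on $F$. Once this identity is in hand, the remainder is a routine strong-continuity and contraction-bound argument, analogous in spirit to the Chernoff/Trotter-style reasoning for approximating semigroups by their discretisations.
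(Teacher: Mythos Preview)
Your argument is correct and follows the same strategy as the paper: construct the net from $[T]_{\bhatskeide}^{\eps,w}$ with $w$ chosen so that $w^{\ast}$ sends the relevant finite set of vectors to the $\einser$-slice, and then estimate the difference using uniform $\topSOT$-continuity of $T$ on a compact enlargement. The only notable difference is that you invoke \Cref{prop:multilinear-interpolation:sig:article-interpolation-raj-dahya} to obtain the averaging formula directly, whereas the paper recomputes the convex-combination structure from scratch via the expansion \eqref{eq:signal-recovery:sig:article-interpolation-raj-dahya}; your shortcut is cleaner and equally valid. (The inclusion of $K$ in your index set is redundant since it plays no role in defining $T^{(\alpha)}$, but this is harmless.)
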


    \begin{proof}
        Let $P \subseteq \BoundedOps{\HilbertRaum}$ be the index set
        consisting of finite-rank projections on $\HilbertRaum$
        directly ordered by
                $p \geq q \mathrel{\ratio\Leftrightarrow} \ran(p) \supseteq \ran(q)$
            for $p,q \in P$.
        For $p \in P$, since $\HilbertRaum$ is infinite-dimensional,
        we may find a unitary operator
            ${w_{p} \colon L^{2}(\Torus^{d}) \otimes \HilbertRaum \to \HilbertRaum}$
        satisfying
            $
                w_{p}\,(\einser \otimes \xi)
                = \xi
            $
        for each $\xi\in\ran(p)$.%
        \footref{ft:pf:unitary-equivalence-L^2-tensor-H:sig:article-interpolation-raj-dahya}
        Let $(0,\:\infty) \times P$ be the index set
        directly ordered by
            $
                (\eps^{\prime},p) \geq (\eps,q)
                \mathrel{\ratio\Leftrightarrow}
                \ran(p) \supseteq \ran(q)
                ~\text{and}~
                \eps^{\prime} \leq \eps
            $
        and consider the net
            $\Big(
                T^{(\eps,p)}
                \coloneqq
                [T]_{\bhatskeide}^{\eps,w_{p}}
            \Big)_{(\eps, p) \in (0,\:\infty) \times P}$.

        Fix arbitrary
            $\delta > 0$,
            $F \subseteq \HilbertRaum$ finite,
            and
            $K \subseteq \realsNonNeg^{d}$ compact.
        Let $\tau > 0$ be sufficiently large such that
            $[0,\:\tau]^{d} \supseteq K$.
        Since $T$ is uniformly $\topSOT$-continuous on $[0,\:2\tau]^{d}$,
        there exists
            $\eps_{0} \in (0,\:\tau)$
        such that

            \begin{restoremargins}
            \begin{equation}
            \label{eq:1:sig:article-interpolation-raj-dahya}
                \sup_{\substack{%
                    \mathbf{s},\mathbf{t} \in [0,\:2\tau]^{d}:\\
                    \max_{i}\abs{s_{i}-t_{i}} \leq \eps_{0}
                }}
                    \norm{
                        (
                            T(\mathbf{s})
                            -
                            T(\mathbf{t})
                        )\xi
                    }
                < \frac{\delta}{\max\{\norm{\eta} \mid \eta \in F\} + 1}
            \end{equation}
            \end{restoremargins}

        \continueparagraph
        holds for each $\xi \in F$.
        Let $p_{0} \in P$ be the finite-rank projection onto $\linspann(F)$.
        Consider an arbitrary $(\eps,p) \in (0,\:\infty) \times P$
        with $(\eps,p) \geq (\eps_{0},p_{0})$,
        \idest
            $\eps \leq \eps_{0}$ and $\ran(p) \supseteq \ran(p_{0})$.
        For $\xi,\eta \in F$ and $\mathbf{t} \in K$
        one computes

            \begin{longeqnarray}
                &&\brkt{[T]_{\bhatskeide}^{\eps,w_{p}}(\mathbf{t})\xi}{\eta}\\
                &\eqcrefoverset{eq:signal-recovery:sig:article-interpolation-raj-dahya}{=}
                    &\sum_{\mathbf{e} \in \{0,1\}^{d}}
                        \brktLong{
                            w_{p}
                            \Big(
                                \Big(
                                    \prod_{i=1}^{d}
                                        U_{i}(t_{i})
                                        P_{i}(t_{i})^{\perp^{e_{i}}}
                                \Big)
                                \otimes
                                T(\mathbf{t}^{(\mathbf{e})})
                            \Big)
                            w_{p}^{\ast}
                            \:\xi
                        }{\eta}\\
                &=
                    &\sum_{\mathbf{e} \in \{0,1\}^{d}}
                        \brktLong{
                            \Big(
                                \Big(
                                    \prod_{i=1}^{d}
                                        U_{i}(t_{i})
                                        P_{i}(t_{i})^{\perp^{e_{i}}}
                                \Big)
                                \otimes
                                T(\mathbf{t}^{(\mathbf{e})})
                            \Big)
                            \:
                            w_{p}^{\ast}\xi
                        }{
                            w_{p}^{\ast}\eta
                        }\\
                &=
                    &\sum_{\mathbf{e} \in \{0,1\}^{d}}
                        \brktLong{
                            \Big(
                                \Big(
                                    \prod_{i=1}^{d}
                                        U_{i}(t_{i})
                                        P_{i}(t_{i})^{\perp^{e_{i}}}
                                \Big)
                                \otimes
                                T(\mathbf{t}^{(\mathbf{e})})
                            \Big)
                            \:
                            (\einser \otimes \xi)
                        }{
                            \einser \otimes \eta
                        }\\
                &&\text{
                    per construction of $w_{p}$
                    and since
                        $
                            \xi,\eta \in F
                            \subseteq \ran(p_{0})
                            \subseteq \ran(p)
                        $
                }\\
                &=
                    &\sum_{\mathbf{e} \in \{0,1\}^{d}}
                        \underbrace{
                            \brktLong{
                                \Big(
                                    \prod_{i=1}^{d}
                                        U_{i}(t_{i})
                                        P_{i}(t_{i})^{\perp^{e_{i}}}
                                \Big)
                                \einser
                            }{\einser}
                        }_{\eqqcolon \nu_{\mathbf{e}}(\mathbf{t})}
                        \brkt{
                            T(\mathbf{t}^{(\mathbf{e})})
                            \xi
                        }{\eta}.\\
            \end{longeqnarray}

        Now,
        by the commutativity of each
            $U_{i}(t_{i})$ with $P_{j}(t_{j})$ for $i \neq j$
        as well as the commutativity
        of the projections
            $\{P_{i}(t_{i})\}_{i=1}^{d}$,
        and since
            $\{U_{i}(t_{i})\}_{i=1}^{d}$
        are Koopman operators
        (\cf the construction in the proof of \Cref{lemm:bhat-skeide:multi:sig:article-interpolation-raj-dahya}),
        one has

            \begin{longeqnarray}
                \nu_{\mathbf{e}}(\mathbf{t})
                &= &\brktLong{
                    \Big(
                        \prod_{i=1}^{d}
                            U_{i}(t_{i})
                        \prod_{i=1}^{d}
                            P_{i}(t_{i})^{\perp^{e_{i}}}
                    \Big)
                    \einser
                }{\einser}\\
                &= &\brktLong{
                    \Big(
                        \prod_{i=1}^{d}
                            P_{i}(t_{i})^{\perp^{e_{i}}}
                    \Big)
                    \einser
                }{
                    \Big(
                        \prod_{i=1}^{d}
                            U_{i}(t_{i})
                    \Big)^{\ast}
                    \einser
                }\\
                &= &\brktLong{
                    \Big(
                        \prod_{i=1}^{d}
                            P_{i}(t_{i})^{\perp^{e_{i}}}
                    \Big)
                    \einser
                }{
                    \einser
                }
                = \normLong{
                        \Big(\prod_{i=1}^{d}P_{i}(t_{i})^{\perp^{e_{i}}} \Big)
                        \einser
                    }^{2}
                \in [0,\:1]
            \end{longeqnarray}

        \continueparagraph
        for each $\mathbf{e}\in\{0,1\}^{d}$.
        Furthermore

            \begin{shorteqnarray}
                \sum_{\mathbf{e} \in \{0,1\}^{d}}
                    \nu_{\mathbf{e}}(\mathbf{t})
                &= &\sum_{\mathbf{e} \in \{0,1\}^{d}}
                    \brktLong{
                        \Big(
                            \prod_{i=1}^{d}
                                P_{i}^{\perp^{e_{i}}}(t_{i})
                        \Big)
                        \einser
                    }{
                        \einser
                    }\\
                &= &\brktLong{
                        \sum_{\mathbf{e} \in \{0,1\}^{d}}
                        \Big(
                            \prod_{i=1}^{d}
                                P_{i}^{\perp^{e_{i}}}(t_{i})
                        \Big)
                        \einser
                    }{
                        \einser
                    }\\
                &= &\brktLong{
                        \Big(
                            \prod_{i=1}^{d}
                                \underbrace{
                                    \sum_{e \in \{0,1\}}
                                        P_{i}^{\perp^{e}}(t_{i})
                                }_{
                                    =P_{i}(t_{i}) + (\onematrix - P_{i}(t_{i}))
                                    =\onematrix
                                }
                        \Big)
                        \einser
                    }{
                        \einser
                    }
                = \brkt{\einser}{\einser}
                = 1,
            \end{shorteqnarray}

        \continueparagraph
        \idest
            $\{
                \nu_{\mathbf{e}}(\mathbf{t})
            \}_{\mathbf{e}\in\{0,1\}^{d}}$
        are non-negative reals summing to $1$.
        Applying this to the above expression
        for the inner product thus yields

        \begin{shorteqnarray}
            \brkt{
                (
                    T^{(\eps,p)}(\mathbf{t})
                    -
                    T(\mathbf{t})
                )
                \xi
            }{\eta}
                &= &\brkt{
                        (
                            [T]_{\bhatskeide}^{\eps,w_{p}}
                            -
                            T(\mathbf{t})
                        )
                        \xi
                    }{\eta}\\
                &= &\sum_{\mathbf{e} \in \{0,1\}^{d}}
                        \nu_{\mathbf{e}}(\mathbf{t})
                        \cdot
                        \brkt{
                            (
                                T(\mathbf{t}^{(\mathbf{e})})
                                -
                                T(\mathbf{t})
                            )
                            \xi
                        }{\eta}.\\
        \end{shorteqnarray}

        Since $\mathbf{t} \in K \subseteq [0,\:\tau]^{d}$,
        by the choice of $\eps_{0}$
        one has
            $
                \mathbf{t}^{(\mathbf{e})}
                = ((\floor{\tfrac{t_{i}}{\eps}} + e_{i})\eps)_{i=1}^{d}
                \in [0,\:\tau + \eps]^{d}
                \subseteq [0,\:\tau + \eps_{0}]^{d}
                \subseteq [0,\:2\tau]^{d}
            $
        for each $\mathbf{e} \in \{0,1\}^{d}$.
        Noting further that
            $
                \abs{
                    t_{i}
                    -
                    (\floor{\tfrac{t_{i}}{\eps}} + e_{i})\eps
                }
                = \abs{\fractional{\tfrac{t_{i}}{\eps}} - e_{i}} \cdot \eps
                \leq \max\{
                        1-\fractional{\tfrac{t_{i}}{\eps}},
                        \fractional{\tfrac{t_{i}}{\eps}}
                    \} \cdot \eps
                \leq \eps
                \leq \eps_{0}
            $,
        one may apply the uniform continuity condition in
        \eqcref{eq:1:sig:article-interpolation-raj-dahya}
        to the above computation, yielding

            \begin{longeqnarray}
                &&\sup_{\mathbf{t} \in K}
                    \abs{
                    \brkt{
                        (
                            T^{(\eps,p)}(\mathbf{t})
                            -
                            T(\mathbf{t})
                        )
                        \xi
                    }{\eta}}\\
                &=
                    &\sup_{\mathbf{t} \in K}
                    \sum_{\mathbf{e} \in \{0,1\}^{d}}
                        \nu_{\mathbf{e}}(\mathbf{t})
                        \cdot
                        \abs{
                        \brkt{
                            (
                                T(\mathbf{t}^{(\mathbf{e})})
                                -
                                T(\mathbf{t})
                            )
                            \xi
                        }{\eta}
                        }\\
                &\leq
                    &\underbrace{
                        \sum_{\mathbf{e} \in \{0,1\}^{d}}
                            \nu_{\mathbf{e}}(\mathbf{t})
                    }_{=1}
                    \cdot
                    \sup_{\mathbf{t} \in K}
                    \max_{\mathbf{e} \in \{0,1\}^{d}}
                        \abs{
                        \brkt{
                            (
                                T(\mathbf{t}^{(\mathbf{e})})
                                -
                                T(\mathbf{t})
                            )
                            \xi
                        }{\eta}}\\
                &\leq
                    &\sup_{\substack{%
                        \mathbf{s},\mathbf{t} \in [0,\:2\tau]^{d}:\\
                        \max_{i}\abs{s_{i}-t_{i}} \leq \eps_{0}
                    }}
                        \abs{
                        \brkt{
                            (
                                T(\mathbf{s})
                                -
                                T(\mathbf{t})
                            )
                            \xi
                        }{\eta}}\\
                &\textoverset{C-S}{\leq}
                    &\sup_{\substack{%
                        \mathbf{s},\mathbf{t} \in [0,\:2\tau]^{d}:\\
                        \max_{i}\abs{s_{i}-t_{i}} \leq \eps_{0}
                    }}
                        \norm{
                            (
                                T(\mathbf{s})
                                -
                                T(\mathbf{t})
                            )
                            \xi
                        }
                        \norm{\eta}\\
                &\eqcrefoverset{eq:1:sig:article-interpolation-raj-dahya}{<}
                    &\delta\\
            \end{longeqnarray}

        \continueparagraph
        for $\xi,\eta \in F$.
        Hence
            ${
                (T^{(\eps, p)})_{(\eps, p) \in (0,\:\infty) \times P}
                \overset{\tinytoplocWOT}{\longrightarrow}
                T
            }$.
    \end{proof}

By construction, the multi-parameter semigroups
in \eqcref{eq:signal-recovery:sig:article-interpolation-raj-dahya}
are unitarily similar to time-scaled Bhat--Skeide interpolations.
As such, each such semigroup is completely determined
by a few pieces of information:
    a time-scale $\eps > 0$,
    a $d$-tuple $\{S_{i}\}_{i=1}^{d}$
    of commuting contractions on the Hilbert space $\HilbertRaum$,
and
    a unitary operator
    ${w \colon L^{2}(\Torus^{d}) \otimes \HilbertRaum \to \HilbertRaum}$.
The class of such semigroups thus admits a simple parameterisation
and simple connections to discrete-time processes.
This provides good reason to find these approximants interesting
in and of themselves.

The following result is likely well-known,
but demonstrates a simple application
of these approximants.

\begin{prop}
\makelabel{prop:dilatability-iff-ptwise-power-dilation:sig:article-interpolation-raj-dahya}
    Let
        $d \in \naturals$
        and
        $\{T_{i}\}_{i=1}^{d}$ be a commuting family
        of contractive $\Cnought$-semigroups
        on an infinite-dimensional Hilbert space $\HilbertRaum$.
    Then $\{T_{i}\}_{i=1}^{d}$ has a simultaneous unitary dilation
    if and only if
        $\{T_{i}(t_{i})\}_{i=1}^{d}$
    has a simultaneous power-dilation
    for all $\mathbf{t} \in \realsNonNeg^{d}$.
\end{prop}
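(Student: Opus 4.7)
The \emph{only if} direction is immediate: given a simultaneous unitary dilation $(\HilbertRaum^{\prime}, U, r)$ of $\{T_{i}\}_{i=1}^{d}$ and any fixed $\mathbf{t} \in \realsNonNeg^{d}$, the commuting unitaries $\{U_{i}(t_{i})\}_{i=1}^{d}$ power-dilate $\{T_{i}(t_{i})\}_{i=1}^{d}$ via the \emph{same} isometry $r$, since the semigroup identities $T_{i}(t_{i})^{n_{i}} = T_{i}(n_{i} t_{i})$ and $U_{i}(t_{i})^{n_{i}} = U_{i}(n_{i} t_{i})$ turn the dilation relation $\prod_{i} T_{i}(n_{i} t_{i}) = r^{\ast} \prod_{i} U_{i}(n_{i} t_{i}) r$ into the required power-dilation identity for every $\mathbf{n} \in \naturalsZero^{d}$.

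The plan for the \emph{if} direction is to exhibit $T$ as a $\toplocWOT$-limit of commuting $d$-parameter unitary $\Cnought$-semigroups on $\HilbertRaum$, and then to conclude via \Cref{prop:dilatability-equiv-to-weak-unitary-approximability:sig:article-interpolation-raj-dahya} that $T$ admits a simultaneous unitary dilation. For each $\eps > 0$ the hypothesis supplies a simultaneous power dilation $(K_{\eps}, \{V_{i}^{(\eps)}\}_{i=1}^{d}, r_{\eps})$ of $\{T_{i}(\eps)\}_{i=1}^{d}$ with $\{V_{i}^{(\eps)}\}$ commuting unitaries. I then apply the generalised Bhat--Skeide interpolation (\Cref{lemm:bhat-skeide:multi:sig:article-interpolation-raj-dahya}) to this unitary family; since the interpolation preserves unitarity (as noted after \Cref{lemm:bhat-skeide:single:sig:article-interpolation-raj-dahya}), this yields commuting \emph{unitary} $\Cnought$-semigroups $\{\tilde{V}_{i}^{(\eps)}\}_{i=1}^{d}$ on $L^{2}(\Torus^{d}) \otimes K_{\eps}$. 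Time-rescaling, I set $W_{i}^{(\eps)}(t) \coloneqq \tilde{V}_{i}^{(\eps)}(t/\eps)$ for $t \in \realsNonNeg$.

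Next, I introduce the isometry $v_{\eps} \colon \HilbertRaum \to L^{2}(\Torus^{d}) \otimes K_{\eps}$ by $v_{\eps} \xi = \einser \otimes r_{\eps} \xi$. Applying \Cref{prop:multilinear-interpolation:sig:article-interpolation-raj-dahya} to $\{\tilde{V}_{i}^{(\eps)}\}$, expanding the resulting commuting product, and using the power-dilation identity $r_{\eps}^{\ast} \prod_{i} (V_{i}^{(\eps)})^{n_{i}} r_{\eps} = \prod_{i} T_{i}(n_{i}\eps)$ yields
\begin{displaymath}
v_{\eps}^{\ast} \Big(\prod_{i=1}^{d} W_{i}^{(\eps)}(t_{i})\Big) v_{\eps}
= \sum_{\mathbf{e} \in \{0,1\}^{d}}
\Big(\prod_{i=1}^{d} (1-\fractional{t_{i}/\eps})^{1-e_{i}} \fractional{t_{i}/\eps}^{e_{i}}\Big)
\cdot \prod_{i=1}^{d} T_{i}\big((\floor{t_{i}/\eps}+e_{i})\eps\big).
\end{displaymath}
Since $\abs{(\floor{t_{i}/\eps}+e_{i})\eps - t_{i}} \leq \eps$, the joint $\topSOT$-continuity of $T$ on compact subsets of $\realsNonNeg^{d}$, combined with the fact that the non-negative weights sum to $1$, forces $v_{\eps}^{\ast} \prod_{i} W_{i}^{(\eps)}(t_{i}) v_{\eps} \to T(\mathbf{t})$ in $\topSOT$ uniformly on compact subsets of $\realsNonNeg^{d}$ as $\eps \to 0^{+}$.

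It remains to transport the approximants onto $\HilbertRaum$ itself. Following the technique from the proof of \Cref{prop:sim-embeddable-tuples-pw-dense:sig:article-interpolation-raj-dahya}, for each pair $(\eps, F)$ with $F \subseteq \HilbertRaum$ finite I choose a unitary $u_{\eps, F} \colon L^{2}(\Torus^{d}) \otimes K_{\eps} \to \HilbertRaum$ satisfying $u_{\eps, F}(\einser \otimes r_{\eps} \xi) = \xi$ for every $\xi \in F$; this is possible by arranging $\dim(K_{\eps}) = \dim(\HilbertRaum)$ (first reducing to a separable $\{T_{i}\}$-invariant subspace if $\HilbertRaum$ is non-separable), so that elementary cardinal arithmetic gives $\dim(L^{2}(\Torus^{d}) \otimes K_{\eps}) = \dim(\HilbertRaum)$. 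The conjugates $U_{i}^{(\eps, F)} \coloneqq u_{\eps, F} W_{i}^{(\eps)} u_{\eps, F}^{\ast}$ are then commuting unitary $\Cnought$-semigroups on $\HilbertRaum$, and the previous convergence immediately yields $U^{(\eps, F)} \to T$ in $\toplocWOT$ along the directed set ordered by $\eps \downarrow 0$ and $F$ increasing. Thus $T \in \quer{A}$, and \Cref{prop:dilatability-equiv-to-weak-unitary-approximability:sig:article-interpolation-raj-dahya} produces the desired simultaneous unitary dilation. The main technical hurdle is precisely this final transfer step, specifically the dimensional compatibility needed to construct each $u_{\eps, F}$; this will be handled in complete analogy with the proof of \Cref{prop:sim-embeddable-tuples-pw-dense:sig:article-interpolation-raj-dahya}.
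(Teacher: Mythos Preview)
Your approach is correct and closely related to the paper's, but there is a genuine structural difference worth noting. The paper does not construct unitary approximants directly. Instead it reuses the already-established approximation $[T]_{\bhatskeide}^{\eps,w_{p}} \overset{\tinytoplocWOT}{\longrightarrow} T$ from \Cref{prop:signal-recovery:approx:sig:article-interpolation-raj-dahya}, and then shows that each such approximant is \emph{dilatable}: the Bhat--Skeide interpolation of the power-dilating unitaries $\{V_{i}\}$ furnishes a unitary dilation of $[\{T_{i}(\eps)\}]_{\bhatskeide}$, hence of $[T]_{\bhatskeide}^{\eps,w}$. Since $D=\quer{A}$ is closed, the limit $T$ lies in $D$. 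Your route instead interpolates the $V_{i}^{(\eps)}$ to get actual unitary semigroups, computes their compression via \Cref{prop:multilinear-interpolation:sig:article-interpolation-raj-dahya}, and transports them onto $\HilbertRaum$. This is more direct (you land in $A$ rather than merely $D$) but slightly less modular, since you are effectively re-proving a variant of \Cref{prop:signal-recovery:approx:sig:article-interpolation-raj-dahya} along the way.

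One small gap: your parenthetical ``first reducing to a separable $\{T_{i}\}$-invariant subspace if $\HilbertRaum$ is non-separable'' is not the right justification for arranging $\dim(K_{\eps})=\dim(\HilbertRaum)$. What you need is to pass to the \emph{minimal} power dilation (the closed $\{V_{i}^{(\eps)}\}$-invariant subspace generated by $r_{\eps}\HilbertRaum$), which has dimension at most $\aleph_{0}\cdot\dim(\HilbertRaum)=\dim(\HilbertRaum)$ and at least $\dim(\HilbertRaum)$. The paper's approach sidesteps this issue entirely, since its approximants already live on $\HilbertRaum$ and only their dilatability (not the dimension of the dilation space) matters.
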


    \begin{proof}
        The \usesinglequotes{only if}-direction is a straightforward observation.
        Towards the \usesinglequotes{if}-direction,
        first let
            ${T \colon \realsNonNeg^{d} \to \OpSpaceC{\HilbertRaum}}$
        be the corresponding $\topSOT$-continuous homomorphism
        corresponding to $\{T_{i}\}_{i=1}^{d}$.
        By
            \cite[Theorem~1.18]{Dahya2024approximation}
        (see also
            \Cref{prop:dilatability-equiv-to-weak-unitary-approximability:sig:article-interpolation-raj-dahya}
        below)
        it suffices to find $\toplocWOT$-approximants
        for $T$ which are unitarily dilatable.
        To this end it suffices to consider the approximants
            $
                (
                    [T]_{\bhatskeide}^{\eps,w_{p}}
                )_{(\eps,p) \in (0,\:\infty) \times P}
            $
        constructed in
        \Cref{prop:signal-recovery:approx:sig:article-interpolation-raj-dahya}.

        Let
            $\eps > 0$
            and
            ${w \colon L^{2}(\Torus^{d}) \otimes \HilbertRaum \to \HilbertRaum}$
        be arbitrary.
        By assumption,
            $\{S_{i} \coloneqq T_{i}(\eps)\}_{i=1}^{d}$
        admits some power-dilation
            $(
                \HilbertRaum^{\prime},
                \{V_{i}\}_{i=1}^{d},
                r
            )$.
        Consider now the Bhat--Skeide interpolations
            $[\{S_{i}\}_{i=1}^{d}]_{\bhatskeide}$
            and
            $[\{V_{i}\}_{i=1}^{d}]_{\bhatskeide}$.
        By the discussion at the end of
        \S{}\ref{sec:interpolation:single:sig:article-interpolation-raj-dahya},
        we know that
        the latter corresponds to
        a commuting family of $d$ unitary $\Cnought$-semigroups.
        By working with the generalised Bhat--Skeide construction
        (see \eqcref{eq:bs-dilation-family:sig:article-interpolation-raj-dahya})
        it is a straightforward exercise to see that
            $(
                L^{2}(\Torus^{d}) \otimes\HilbertRaum^{\prime},
                [\{V_{i}\}_{i=1}^{d}]_{\bhatskeide},
                \id \otimes r
            )$
        constitutes a unitary dilation of
            $[\{S_{i}\}_{i=1}^{d}]_{\bhatskeide}$.
        That is,
            $[\{T_{i}(\eps)\}_{i=1}^{d}]_{\bhatskeide}$
        and hence also
            $
                [T]_{\bhatskeide}^{\eps,w}
                =
                \adjoint_{w} \circ [\{T_{i}\}_{i=1}^{d}]_{\bhatskeide}(\eps^{-1}\cdot)
            $
        are unitarily dilatable.
        Since this holds for all $\eps$, $w$,
        it follows that $T$ is unitarily dilatable.
    \end{proof}




\subsection[Non-dilatable families of semigroups]{Non-dilatable families of semigroups}
\label{sec:interpolation:counterexamples:sig:article-interpolation-raj-dahya}

\firstparagraph
Building on the following counter-examples,
the generalised Bhat--Skeide interpolation provides sufficient means to prove
\Cref{thm:existence-of-non-dilatable:sig:article-interpolation-raj-dahya}.

\begin{prop}[Parrott, 1970]
\makelabel{prop:parrotts-counterexample:sig:article-interpolation-raj-dahya}
    Let $H_{0}$ be a Hilbert space.
    Every pair of non-commuting unitaries $R_{1},R_{2}\in\BoundedOps{H_{0}}$
    yields a tuple

        \begin{displaymath}
            \Big(
                R_{1} \otimes E_{21},R_{2} \otimes E_{21}, \onematrix_{H_{0}} \otimes E_{21}
            \Big)
            \in \BoundedOps{H_{0} \otimes \complex^{2}}^{3}
        \end{displaymath}

    \continueparagraph
    of commuting contractions which admits no power-dilation.%
    \footref{ft:Eij:sig:article-interpolation-raj-dahya}
\end{prop}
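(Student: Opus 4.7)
The plan is to first verify the elementary claims that the triple consists of pairwise commuting contractions, and then derive a contradiction from an assumed power-dilation. For the first part, writing $R_{3} \coloneqq \onematrix_{H_{0}}$ and $T_{i} \coloneqq R_{i} \otimes E_{21}$, the identity $E_{21}^{2} = 0$ yields $T_{i}T_{j} = R_{i}R_{j} \otimes E_{21}^{2} = \zeromatrix$ for all $i,j \in \{1,2,3\}$. All pairwise products thus vanish identically, so commutativity is trivial, and contractivity is immediate. A crucial structural observation is that each $T_{i}$ is a partial isometry with \emph{common} initial projection $P_{1} \coloneqq \onematrix \otimes E_{11}$ and \emph{common} final projection $P_{2} \coloneqq \onematrix \otimes E_{22}$; indeed, $T_{i}^{*}T_{i} = R_{i}^{*}R_{i}\otimes E_{11} = P_{1}$ and $T_{i}T_{i}^{*} = P_{2}$.

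Suppose now, for the sake of contradiction, that a power-dilation $(\HilbertRaum^{\prime},\{V_{i}\}_{i=1}^{3},r)$ exists. Passing to adjoints of $r^{*}V_{i}r = T_{i}$ also yields $r^{*}V_{i}^{*}r = T_{i}^{*}$. The centrepiece of the argument is the following intertwining lemma: for every $\xi_{0} \in H_{0}$ and $i \in \{1,2,3\}$,
\[
    V_{i}\:r(\xi_{0} \otimes e_{1}) = r\:T_{i}(\xi_{0} \otimes e_{1})
    \quad\text{and}\quad
    V_{i}^{*}\:r(\xi_{0} \otimes e_{2}) = r\:T_{i}^{*}(\xi_{0} \otimes e_{2}).
\]
The first identity follows from the equality case of Cauchy--Schwarz: since $V_{i}$ is unitary and $r$ isometric, $\norm{V_{i}\:r(\xi_{0}\otimes e_{1})} = \norm{\xi_{0}}$; since $T_{i}$ is partial isometric on $\ran(P_{1})$, $\norm{r\:T_{i}(\xi_{0}\otimes e_{1})} = \norm{\xi_{0}}$; and $\brkt{V_{i}\:r(\xi_{0}\otimes e_{1})}{r\:T_{i}(\xi_{0}\otimes e_{1})}$ evaluates via $r^{*}V_{i}r = T_{i}$ to $\norm{T_{i}(\xi_{0}\otimes e_{1})}^{2} = \norm{\xi_{0}}^{2}$. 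Expanding the norm of the difference then gives zero. The second identity is analogous, using $r^{*}V_{i}^{*}r = T_{i}^{*}$ and that $T_{i}^{*}$ is partial isometric on $\ran(P_{2})$.

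To conclude, set $W_{i} \coloneqq V_{3}^{*}V_{i}$ for $i \in \{1,2\}$. These are unitaries on $\HilbertRaum^{\prime}$, and by commutativity of $\{V_{i}\}_{i=1}^{3}$ satisfy $W_{1}W_{2} = W_{2}W_{1}$. Chaining the two halves of the intertwining lemma, for $i \in \{1,2\}$ and $\xi_{0}\in H_{0}$,
\[
    W_{i}\:r(\xi_{0}\otimes e_{1})
    = V_{3}^{*}\:r(R_{i}\xi_{0}\otimes e_{2})
    = r(R_{i}\xi_{0}\otimes e_{1}),
\]
so $W_{i}$ acts as the unitary conjugate of $R_{i}$ on the subspace $r(H_{0}\otimes e_{1})$. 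Iterating, $W_{1}W_{2}\:r(\xi_{0}\otimes e_{1}) = r(R_{1}R_{2}\xi_{0}\otimes e_{1})$ and likewise $W_{2}W_{1}\:r(\xi_{0}\otimes e_{1}) = r(R_{2}R_{1}\xi_{0}\otimes e_{1})$. The commutativity $W_{1}W_{2} = W_{2}W_{1}$ combined with injectivity of $r$ yields $R_{1}R_{2} = R_{2}R_{1}$, contradicting the non-commutativity of $R_{1},R_{2}$. The main (though not severe) obstacle is the careful setup of the intertwining lemma --- in particular, the $V_{i}^{*}$ identity requires reusing the already-established $V_{i}$ identity inside the inner-product computation; once these are in place, the contradiction follows mechanically from the commutativity of the dilating unitaries.
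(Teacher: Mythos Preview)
Your proof is correct. The paper does not supply its own proof of this proposition, instead citing Parrott's original article and Sz.-Nagy--Foia\c{s}; your argument is essentially the classical one found in those references, hinging on the equality case of Cauchy--Schwarz to show that $V_{i}$ restricts to $T_{i}$ on the isometric part of the domain, and then using $V_{3}^{*}V_{i}$ to transport the commutativity of the dilating unitaries back to $R_{1},R_{2}$. One minor remark: the $V_{i}^{*}$ intertwining identity does not actually require reusing the $V_{i}$ identity --- it follows by the same direct Cauchy--Schwarz argument applied to $T_{i}^{*}$, which is isometric on $\ran(P_{2})$ --- but this does not affect the validity of your proof.
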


\footnotetext[ft:Eij:sig:article-interpolation-raj-dahya]{%
    Recall that $E_{ij} \in M_{2}(\complex)$
    denotes the elementary matrix with $1$ in the $(i,j)$-th entry, and $0$'s elsewhere.
}

A proof of this can be found
in \cite[\S{}3]{Parrott1970counterExamplesDilation}
and
\cite[\S{}I.6.3]{Nagy1970}.
Note that in Parrott's presentation,
it suffices to require $R_{1}$ to be unitary
and $R_{2}$ to be a contraction.
We also make use of the following relation
established in \cite{Dahya2024approximation}:

\begin{prop}[Equiv. of dilations and approximations]
\makelabel{prop:dilatability-equiv-to-weak-unitary-approximability:sig:article-interpolation-raj-dahya}
    Let $\HilbertRaum$ be an infinite-dimensional Hilbert space
    and
        $
            (G,M) \in \{
                (\reals^{d},\realsNonNeg^{d}),
                (\integers^{d},\naturalsZero^{d}),
                \mid
                d \in \naturals
            \}
        $.
    Let $A,A^{\ast},\quer{A},\quer{A^{\ast}},D$
    be as defined above (see \S{}\ref{sec:introduction:notation:sig:article-interpolation-raj-dahya}).
    Then
        $A = A^{\ast}$
        and
        $\quer{A} = \quer{A^{\ast}} = D$.
\end{prop}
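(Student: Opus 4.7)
The proof naturally splits into three assertions: (i) the equality $A = A^{\ast}$, which makes $\quer{A} = \quer{A^{\ast}}$ trivial; (ii) the inclusion $D \subseteq \quer{A^{\ast}}$; and (iii) the reverse inclusion $\quer{A^{\ast}} \subseteq D$. Part (i) is handled by the explicit extension formulas already noted in the footnotes of \S{}\ref{sec:introduction:definitions:sig:article-interpolation-raj-dahya}: for the two cases of $(G,M)$ under consideration, the formula $U(\mathbf{x}) = U((\max\{-x_{i},0\})_{i})^{\ast}\,U((\max\{x_{i},0\})_{i})$ canonically lifts any $\topSOT$-continuous unitary homomorphism on $M$ to a continuous unitary representation of $G$; the homomorphism property on $G$ follows by a case analysis on signs of coordinates, and $\topSOT$-continuity transfers from $M$ to $G$ via unitarity.

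For part (ii), given $T \in D$ with unitary dilation $(\HilbertRaum^{\prime}, U, r)$, I first arrange $\dim(\HilbertRaum^{\prime}) = \dim(\HilbertRaum)$ by adjoining to $\HilbertRaum^{\prime}$ an orthogonal summand on which $U$ acts trivially (and by invoking transfinite cardinal arithmetic as in \Cref{prop:sim-embeddable-tuples-pw-dense:sig:article-interpolation-raj-dahya}, using that $\HilbertRaum$ is infinite-dimensional). For each finite-rank projection $p$ on $\HilbertRaum$ I then choose a unitary ${w_{p} \colon \HilbertRaum^{\prime} \to \HilbertRaum}$ satisfying $w_{p}\,r\xi = \xi$ for all $\xi \in \ran(p)$. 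The conjugated representations $U^{(p)} \coloneqq \adjoint_{w_{p}}\circ U \in \Repr{G}{\HilbertRaum}$ satisfy
\begin{displaymath}
    \brkt{U^{(p)}(x)\xi}{\eta}
    = \brkt{U(x)w_{p}^{\ast}\xi}{w_{p}^{\ast}\eta}
    = \brkt{U(x)r\xi}{r\eta}
    = \brkt{T(x)\xi}{\eta}
\end{displaymath}
for all $\xi,\eta \in \ran(p)$ and all $x \in M$. Because this agreement is \emph{exact} on $\ran(p) \times \ran(p)$ (not merely approximate) and holds uniformly in $x$, convergence along the directed set of finite-rank projections is automatic in $\toplocWOT$, yielding $T \in \quer{A^{\ast}}$.

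The main obstacle is part (iii). Given $T \in \quer{A^{\ast}}$, I fix a net $(U^{(\alpha)})_{\alpha\in\Lambda} \subseteq \Repr{G}{\HilbertRaum}$ with ${U^{(\alpha)}\vert_{M} \overset{\tinytoplocWOT}{\longrightarrow} T}$ and a cofinal ultrafilter $\mathcal{U}$ on $\Lambda$, and form the Hilbert-space ultrapower $\hat{\HilbertRaum} \coloneqq \prod_{\mathcal{U}}\HilbertRaum$ with the canonical diagonal isometry ${r \colon \HilbertRaum \hookrightarrow \hat{\HilbertRaum}}$. The pointwise ultralimit $\hat{U}(x) \coloneqq [U^{(\alpha)}(x)]_{\mathcal{U}}$ defines a unitary-valued group homomorphism on $G$, and the hypothesis $\brkt{U^{(\alpha)}(x)\xi}{\eta} \to \brkt{T(x)\xi}{\eta}$ combined with the ultralimit construction gives $r^{\ast}\hat{U}(x)r = T(x)$ for $x \in M$. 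The delicate step is extracting $\topSOT$-continuity: $\hat{U}$ is not \emph{a priori} strongly continuous on all of $\hat{\HilbertRaum}$, so I pass to the closed $\hat{U}$-invariant subspace $\hat{\HilbertRaum}_{c} \subseteq \hat{\HilbertRaum}$ of $\topSOT$-continuous vectors and verify $r(\HilbertRaum) \subseteq \hat{\HilbertRaum}_{c}$. For $\xi \in \HilbertRaum$, weak continuity of ${x \mapsto \hat{U}(x)r\xi}$ at points of $M$ is inherited from continuity of $T$ via the identity $\brkt{\hat{U}(x)r\xi}{r\eta} = \brkt{T(x)\xi}{\eta}$, is propagated to $G$ by decomposing differences $x_{n} - x_{0} = z_{1,n} - z_{2,n}$ with $z_{1,n}, z_{2,n} \in M$ tending to $0$, and is upgraded to $\topSOT$-continuity using norm constancy $\|\hat{U}(x)r\xi\| = \|\xi\|$ (weak convergence of unitaries to a unitary of the same norm forces strong convergence). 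Restricting $\hat{U}$ to $\hat{\HilbertRaum}_{c}$ then produces the required unitary dilation $(\hat{\HilbertRaum}_{c},\hat{U}\vert_{\hat{\HilbertRaum}_{c}},r)$, completing the proof of $T \in D$.
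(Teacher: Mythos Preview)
Your proof is correct in substance and takes a genuinely different route from the paper. The paper's argument for $\quer{A^{\ast}} = D$ is a bare citation: it invokes \cite[Theorem~1.18]{Dahya2024approximation} (verifying applicability to these $(G,M)$ via \cite{Dahya2024approximation,Dahya2022complmetrproblem}) and defers the entire mechanism to that companion paper. You instead give a self-contained construction in both directions---conjugation of a given dilation by suitably chosen unitaries for $D \subseteq \quer{A^{\ast}}$, and an ultrapower for $\quer{A^{\ast}} \subseteq D$---so your argument stands on its own without appeal to the external theorem.

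Two points deserve tightening. In part (ii), adjoining a summand to $\HilbertRaum^{\prime}$ can only \emph{increase} its dimension, whereas $r$ being an isometry already forces $\dim(\HilbertRaum^{\prime}) \geq \dim(\HilbertRaum)$; the case needing work is $\dim(\HilbertRaum^{\prime}) > \dim(\HilbertRaum)$. The remedy is to first replace $\HilbertRaum^{\prime}$ by the minimal dilation space $\quer{\linspann}\{U(g)r\xi : g \in G,\, \xi \in \HilbertRaum\}$, which for second-countable $G$ and $\topSOT$-continuous $U$ has dimension at most $\aleph_{0}\cdot\dim(\HilbertRaum) = \dim(\HilbertRaum)$, and \emph{then} enlarge by a trivial summand if needed. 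In part (iii), your weak-continuity claim is literally verified only against vectors of the form $r\eta$, not against all of $\hat{\HilbertRaum}$; but this does no harm, since the norm-constancy upgrade needs only the diagonal case $\|\hat{U}(x)r\xi - r\xi\|^{2} = 2\|\xi\|^{2} - 2\Re\brkt{T(x)\xi}{\xi} \to 0$ as $x \to e$ within $M$, which your identity provides, and your propagation to all of $G$ then goes through exactly as written.
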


    \begin{proof}
        By the \onetoone-correspondence between
            $\SpHomUs(M,\HilbertRaum)$
            and
            $\Repr{G}{\HilbertRaum}$
        mentioned in \S{}\ref{sec:introduction:definitions:sig:article-interpolation-raj-dahya},
        we have $A = A^{\ast}$.
        By \cite[Theorem~1.18]{Dahya2024approximation},
        which is applicable to $(G,M)$
        (see
            \cite[Example~1.10]{Dahya2024approximation}
            and
            \cite[Appendix~A, Example~4.5, p.~522]{Dahya2022complmetrproblem}%
        )
        and to our infinite-dimensional Hilbert space $\HilbertRaum$,
        unitary dilatability of an element $T \in X$ is equivalent to
        $T$ being $\toplocWOT$-approximable via elements of $A^{\ast}$
        (\cf \cite[Definition~1.15]{Dahya2024approximation}).
        Thus
            $\quer{A} = \quer{A^{\ast}} = D$.
    \end{proof}

\def\beweislabel{thm:existence-of-non-dilatable:sig:article-interpolation-raj-dahya}
\begin{proof}[of \Cref{\beweislabel}]
    Without loss of generality, we can assume that $\mathcal{I} \supseteq \{1,2,3\}$.
    Fix $Z = \Torus^{3}$.
    Since $\dim(\HilbertRaum) \geq \aleph_{0} = \dim(L^{2}(Z))$,
    we can assume without loss of generality that
        $\HilbertRaum = L^{2}(Z) \otimes H$
    where
        $H = H_{0} \otimes \complex^{2}$
    for some Hilbert space
        $H_{0}$ with $\dim(H_{0}) = \dim(\HilbertRaum)$.

    Choose now two non-commuting unitaries $R_{1},R_{2} \in \BoundedOps{H_{0}}$
        (this is possible, since $\dim(H_{0}) \geq 2$).
    Set
        $S_{1} \coloneqq R_{1} \otimes E_{21}$,
        $S_{2} \coloneqq R_{2} \otimes E_{21}$,
        and
        $S_{3} \coloneqq \onematrix_{H_{0}} \otimes E_{21}$.
    Since
        $\{S_{i}\}_{i=1}^{3} \subseteq \BoundedOps{H}$
    is a commuting family of contractions
    (\cf \Cref{prop:parrotts-counterexample:sig:article-interpolation-raj-dahya}),
    by the generalised Bhat--Skeide interpolation
    (\Cref{lemm:bhat-skeide:multi:sig:article-interpolation-raj-dahya}),
    there exists a commuting family,
        $\{T_{i}\}_{i=1}^{3}$
    of contractive $\Cnought$-semigroups
    on $L^{2}(Z) \otimes H = \HilbertRaum$,
    such that
        $
            \prod_{i=1}^{3}T_{i}(n_{i})
            = \prod_{i=1}^{3}
                (\onematrix_{L^{2}(Z)} \otimes S_{i})^{n_{i}}
        $
    for all $\mathbf{n} = (n_{i})_{i=1}^{3}\in\naturalsZero^{3}$.
    Set $T_{i} \coloneqq (\onematrix_{\HilbertRaum})_{t\in\realsNonNeg}$
    for $i \in \mathcal{I} \without \{1,2,3\}$.

    We claim that
        $\{T_{i}\}_{i=1}^{3}$
    and hence
        $\{T_{i}\}_{i\in\mathcal{I}}$
    do not have simultaneous unitary dilations.
    Suppose, towards a contradiction, that
        $(\HilbertRaum^{\prime},\{U_{i}\}_{i=1}^{3},r)$
    is a simultaneous unitary dilation for $\{T_{i}\}_{i=1}^{3}$.
    Set $V_{i} \coloneqq U_{i}(1)$ for each $i\in\mathcal{I}$.
    Then

        \begin{shorteqnarray}
            r^{\ast}\Big(\prod_{i=1}^{3}V_{i}^{n_{i}}\Big)r
            &= &r^{\ast}\Big(\prod_{i=1}^{3}U_{i}(1)^{n_{i}}\Big)r\\
            &= &r^{\ast}\Big(\prod_{i=1}^{3}U_{i}(n_{i})\Big)r\\
            &= &\prod_{i=1}^{3}T_{i}(n_{i})\\
            &= &\prod_{i=1}^{3}(\onematrix_{L^{2}(Z)} \otimes S_{i})^{n_{i}}
        \end{shorteqnarray}

    \continueparagraph
    for $\mathbf{n} = (n_{i})_{i=1}^{3}\in\prod_{i=1}^{3}\naturalsZero$.
    Thus

        \begin{displaymath}
            \{ \onematrix_{L^{2}(Z)} \otimes S_{i} \}_{i=1}^{3}
            =
            \Big(
                R^{\prime}_{1} \otimes E_{21},
                R^{\prime}_{2} \otimes E_{21},
                \onematrix_{H_{0}^{\prime}} \otimes E_{21}
            \Big)
            \in \BoundedOps{H_{0}^{\prime} \otimes \complex^{2}}^{3}
        \end{displaymath}

    \continueparagraph
    has a power-dilation,
    where
        $R^{\prime}_{1} \coloneqq \onematrix_{L^{2}(Z)} \otimes R_{1}$,
        $R^{\prime}_{2} \coloneqq \onematrix_{L^{2}(Z)} \otimes R_{2}$,
        and
        $H_{0}^{\prime} \coloneqq L^{2}(Z) \otimes H_{0}$.
    But applying Parrott's result (\Cref{prop:parrotts-counterexample:sig:article-interpolation-raj-dahya}),
    since $R^{\prime}_{1},R^{\prime}_{2} \in \BoundedOps{L^{2}(Z)\otimes H_{0}}$
    are clearly non-commuting unitaries,
        $\{
            R^{\prime}_{1} \otimes E_{21},
            R^{\prime}_{2} \otimes E_{21},
            \onematrix_{H_{0}^{\prime}} \otimes E_{21}
        \}$
    cannot have a power-dilation.
    As this is a contradiction,
        $\{T_{i}\}_{i=1}^{3}$
    admits no simultaneous unitary dilation.

    \paragraph{Construction with bounded generators:}
    For each $i \in \mathcal{I}$
    fix a net
        $(T_{i}^{(\alpha)})_{\alpha \in \Lambda_{i}}$
    of \highlightTerm{Yosida-approximants},
    which converges to $T_{i}$
    in the topology of uniform $\topSOT$-convergence
    on compact subsets of $\realsNonNeg$
    (written: $\toplocSOT$-convergence).%
    \footref{ft:pf:2:\beweislabel}
    Set $\Lambda \coloneqq \prod_{i=1}^{3}\Lambda_{i}$.
    The net
        $(\{T_{i}^{(\alpha_{i})}\}_{i=1}^{3})_{\boldsymbol{\alpha}\in\Lambda}$
    consists of commuting families of contractive $\Cnought$-semigroups
    and converges to
        $\{T_{i}\}_{i=1}^{3}$
    \wrt the topology of uniform $\topSOT$-convergence
    on compact subsets of $\realsNonNeg^{3}$
    (\cf \cite[Lemma~2.4 and Proposition~2.6--7]{Dahya2024approximation}).
    Let
        $T,T^{(\boldsymbol{\alpha})}\in\SpHomCs(\realsNonNeg^{3},\HilbertRaum)$
    be the $3$-parameter contractive $\Cnought$-semigroups
    corresponding to
        $\{T_{i}\}_{i=1}^{3}$
        and
        $\{T_{i}^{(\alpha_{i})}\}_{i=1}^{3}$
        respectively
    for $\boldsymbol{\alpha}\in\Lambda$
    (\cf the discussion in \S{}\ref{sec:introduction:definitions:sig:article-interpolation-raj-dahya}).

    We now claim that for some index $\boldsymbol{\alpha}\in\Lambda$,
    the commuting family
        $\{T_{i}^{(\alpha_{i})}\}_{i=1}^{3}$
    of contractive $\Cnought$-semigroups
    (with bounded generators)
    has no simultaneous unitary dilation.
    If this were not the case, then each
        $T^{(\boldsymbol{\alpha})}$
    has a unitary dilation.
    By \Cref{prop:dilatability-equiv-to-weak-unitary-approximability:sig:article-interpolation-raj-dahya},
    it follows that
        $\{T^{(\boldsymbol{\alpha})} \mid \boldsymbol{\alpha} \in \Lambda\} \subseteq D = \quer{A^{\ast}}$
    where
        $D \subseteq \SpHomCs(\realsNonNeg^{3},\HilbertRaum) \eqqcolon X$
        is the set of unitarily dilatable elements
        and
        $A^{\ast} = \{ U\restr{\realsNonNeg^{3}} \mid U \in \Repr{\reals^{3}}{\HilbertRaum}\}$
    and the closure is computed inside
        $(X,\toplocWOT)$.
    From the above $\toplocSOT$-convergence,
    it follows that
        ${
            (T^{(\boldsymbol{\alpha})})_{\boldsymbol{\alpha}\in\Lambda}
            \overset{\tinytoplocWOT}{\longrightarrow}
            T
        }$
    and thus $T \in \quer{A^{\ast}} = D$.
    Thus $T$ must be unitarily dilatable,
    \idest $\{T_{i}\}_{i=1}^{3}$ has a simultaneous unitary dilation.
    This contradicts the above construction.
    Thus setting
        $\tilde{T}_{i} \coloneqq T^{(\alpha_{i})}_{i}$
        for $i\in\{1,2,3\}$
        and some suitable index $\boldsymbol{\alpha}\in\Lambda$,
        and
        $\tilde{T}_{i} \coloneqq (\onematrix_{\HilbertRaum})_{t\in\realsNonNeg}$
        for $i \in \mathcal{I} \without \{1,2,3\}$,
    one has that $\{\tilde{T}_{i}\}_{i\in\mathcal{I}}$
    is a commuting family of contractive $\Cnought$-semigroups
    with bounded generators and which admits no simultaneous unitary dilation.
\end{proof}

\footnotetext[ft:pf:2:\beweislabel]{%
    Let $A_{i}$ be the generator of $T_{i}$.
    For each $\alpha \in (0,\:\infty)$,
    the $\alpha$-th Yosida-approximant
    is given by
        $
            T_{i}^{(\alpha)} = (e^{tA_{i}^{(\alpha)}})_{t\in\realsNonNeg}
        $
    where
        $
            A_{i}^{(\alpha)}
            = \alpha A_{i}(\alpha\onematrix - A_{i})^{-1}
        $.
    For the well-definedness of these approximants
    and proof of the convergence
        ${
            T_{i}^{(\alpha)}
            \overset{\tinytoplocSOT}{\longrightarrow}
            T_{i}
        }$
    for ${\alpha \to \infty}$,
    see \cite[Theorem~G.4.3]{HytNeervanMarkLutz2016bookVol2}, \cite[Theorem~II.3.5, pp.~73--74]{EngelNagel2000semigroupTextBook}, \cite[(12.3.4), p.~361]{Hillephillips1957faAndSg}.%
}




\section[Residuality results]{Residuality results}
\label{sec:residuality:sig:article-interpolation-raj-dahya}

\firstparagraph
As discussed in the introduction,
there is a \onetoone-correspondence between
commuting families
    $\{T_{i}\}_{i=1}^{d}$
of $d$ contractive $\Cnought$-semigroups on a Hilbert space $\HilbertRaum$
and $\topSOT$-continuous contractive homomorphisms
${T \colon \realsNonNeg^{d} \to \BoundedOps{\HilbertRaum}}$.
Similarly, there is a \onetoone-correspondence between
commuting families
    $\{S_{i}\}_{i=1}^{d}$
of $d$ contractions on $\HilbertRaum$
and contractive homomorphism
${S \colon \naturalsZero^{d} \to \BoundedOps{\HilbertRaum}}$.
Thus an appropriate general setting is to consider
topological submonoids $M$ of a topological group $G$,
and the space
    $\SpHomCs(M,\HilbertRaum)$
of all $\topSOT$-continuous contractive homomorphisms
    ${T \colon M \to \BoundedOps{\HilbertRaum}}$.
We endow this with the $\toplocWOT$-topology
(the topology of uniform $\topWOT$-convergence on compact subsets of $M$,
see \S{}\ref{sec:introduction:definitions:sig:article-interpolation-raj-dahya})
and consider in particular the subspace
    $\SpHomUs(M,\HilbertRaum)$
of all $\topSOT$-continuous unitary homomorphisms
    ${U \colon M \to \BoundedOps{\HilbertRaum}}$.
Recall that in the case of
    $(G,M) \in \{(\reals^{d},\realsNonNeg^{d}), (\integers^{d},\naturalsZero^{d}) \mid d \in \naturals\}$
the map
    ${
        \Repr{G}{\HilbertRaum} \ni U \mapsto U\restr{M} \in \SpHomUs(M,\HilbertRaum)
    }$
is a bijection.

In this section we
first demonstrate the genericity of elements of $\SpHomUs(M,\HilbertRaum)$
with dense orbits under the unitary action.
This is then used to develop a \zeroone-dichotomy for the residuality of
    $\SpHomUs(M,\HilbertRaum)$ within $(\SpHomCs(M,\HilbertRaum),\toplocWOT)$
under modest assumptions.
Since we now have a more complete picture on the existence of
    non-dilatable commuting families of contractive $\Cnought$-semigroups
    as well as non-dilatable commuting families of contractions,
we are able to decide this dichotomy for the concrete cases of
    $M \in \{\realsNonNeg^{d}, \naturalsZero^{d} \mid d \in \naturals\}$.


\subsection[Universal elements]{Universal elements}
\label{sec:residuality:universal:sig:article-interpolation-raj-dahya}

\firstparagraph
Let $M$ be a topological monoid and $\HilbertRaum$ be a Hilbert space.
We define an action of the group
    $\OpSpaceU{\HilbertRaum}$ (the unitary operators on $\HilbertRaum$)
on $\SpHomCs(M,\HilbertRaum)$ via

    \begin{displaymath}
        (\adjoint_{w}T)(x) \coloneqq \adjoint_{w}T(x) \coloneqq w\:T(x)\:w^{\ast}
    \end{displaymath}

\continueparagraph
for
    $w \in \OpSpaceU{\HilbertRaum}$,
    $T \in \SpHomCs(M,\HilbertRaum)$,
    and
    $x\in M$.
One can readily see that this is a well-defined action.
This gives rise to the equivalence relation

    \begin{displaymath}
        T \similarToUnitary T^{\prime}
        :\Leftrightarrow
        \exists{w \in \OpSpaceU{\HilbertRaum}:~}
            \adjoint_{w}T = T^{\prime}
    \end{displaymath}

\continueparagraph
for $T, T^{\prime} \in \SpHomCs(M,\HilbertRaum)$.
We denote the equivalence classes, or \highlightTerm{orbits}, by

    \begin{displaymath}
        \orbitUnitary{T}
            \coloneqq \{
                    T^{\prime} \in \SpHomCs(M,\HilbertRaum)
                    \mid
                    T \similarToUnitary T^{\prime}
                \}
            = \{ \adjoint_{w}T \mid w \in \OpSpaceU{\HilbertRaum}\}
    \end{displaymath}

\continueparagraph
for $T \in \SpHomCs(M,\HilbertRaum)$.

\begin{defn}
    Let
        $\universalElementsUnitary{M,\HilbertRaum} \subseteq \SpHomCs(M,\HilbertRaum)$
    be the set elements ${T \in \SpHomCs(M,\HilbertRaum)}$
    for which $\orbitUnitary{T}$ is dense in $(\SpHomCs(M,\HilbertRaum),\toplocWOT)$.
    That is, $\universalElementsUnitary{M,\HilbertRaum}$
    is the set of elements with dense orbits.
    Refer to the elements in $\universalElementsUnitary{M,\HilbertRaum}$
    as \highlightTerm{universal elements}.
\end{defn}

\begin{prop}[Residuality of universal elements]
\makelabel{prop:residuality-universal-elements:sig:article-interpolation-raj-dahya}
    Let $M$ be a locally compact Polish topological monoid
    and $\HilbertRaum$ be a separable infinite-dimensional Hilbert space.
    Then
        $\universalElementsUnitary{M,\HilbertRaum}$
    is a dense $G_{\delta}$-subset in $(X,\toplocWOT)$
    where $X \coloneqq \SpHomCs(M,\HilbertRaum)$.
\end{prop}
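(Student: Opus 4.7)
The plan is to invoke the standard Birkhoff-type transitivity theorem: in a Polish space acted on continuously by homeomorphisms, the set of points with dense orbit is a dense $G_\delta$ provided the action is topologically transitive. First I would record that under the hypotheses on $M$ and $\HilbertRaum$, the space $X \coloneqq \SpHomCs(M, \HilbertRaum)$ is Polish in the $\toplocWOT$-topology, as noted in the introduction. Fix a countable basis $\{V_{n}\}_{n \in \naturals}$ of non-empty open subsets of $X$. By definition of a dense orbit,
\[
    \universalElementsUnitary{M, \HilbertRaum}
    = \bigcap_{n \in \naturals} W_{n},
    \quad
    W_{n} \coloneqq \{T \in X : \orbitUnitary{T} \cap V_{n} \neq \varemptyset\}
    = \bigcup_{w \in \OpSpaceU{\HilbertRaum}}
        (\adjoint_{w})^{-1}(V_{n}).
\]
Since each $\adjoint_{w}$ is a $\toplocWOT$-homeomorphism of $X$ (the identity $\brkt{\adjoint_{w}T(x)\xi}{\eta} = \brkt{T(x)(w^{\ast}\xi)}{w^{\ast}\eta}$ gives continuity, and $\adjoint_{w^{\ast}}$ is the inverse), each $W_{n}$ is open, whence $\universalElementsUnitary{M, \HilbertRaum}$ is $G_{\delta}$. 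By the Baire category theorem, density of $\universalElementsUnitary{M, \HilbertRaum}$ reduces to density of each $W_{n}$.

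Density of $W_{n}$ is equivalent to \emph{topological transitivity} of the conjugation action: given any two non-empty basic open subsets $U, V \subseteq X$, there exist $T^{\prime} \in U$ and $w \in \OpSpaceU{\HilbertRaum}$ with $\adjoint_{w} T^{\prime} \in V$. I would prove this via an ampliation--swap construction that exploits separable-infinite-dimensionality in an essential way. Writing $U$ and $V$ as the $\toplocWOT$-neighbourhoods centred at $T_{1}, T_{2} \in X$ and determined by compact sets $K, K^{\prime} \subseteq M$, finite sets $F, F^{\prime} \subseteq \HilbertRaum$ and tolerances $\eps, \eps^{\prime} > 0$, the finite-dimensionality of $\linspann(F \cup F^{\prime})$ combined with $\HilbertRaum \cong \HilbertRaum \oplus \HilbertRaum$ permits the choice of a unitary $u \colon \HilbertRaum \to \HilbertRaum \oplus \HilbertRaum$ whose restriction to $\linspann(F \cup F^{\prime})$ is the canonical inclusion $\xi \mapsto (\xi, 0)$ into the first summand. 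Setting $T^{\prime}(x) \coloneqq u^{\ast}(T_{1}(x) \oplus T_{2}(x))u$ gives an element of $X$, and for $\xi, \eta \in F$ and $x \in M$ the identities $u\xi = (\xi, 0)$ and $u\eta = (\eta, 0)$ yield
\[
    \brkt{T^{\prime}(x)\xi}{\eta}
    = \brkt{(T_{1}(x) \oplus T_{2}(x))(\xi, 0)}{(\eta, 0)}
    = \brkt{T_{1}(x)\xi}{\eta},
\]
so $T^{\prime}$ agrees with $T_{1}$ exactly on the test data of $U$, whence $T^{\prime} \in U$. Letting $S \in \OpSpaceU{\HilbertRaum \oplus \HilbertRaum}$ be the coordinate-swap unitary and setting $w \coloneqq u^{\ast} S u$, one computes $\adjoint_{w}T^{\prime}(x) = u^{\ast}(T_{2}(x) \oplus T_{1}(x))u$, and the analogous calculation on $F^{\prime} \subseteq \linspann(F \cup F^{\prime})$ delivers $\adjoint_{w}T^{\prime} \in V$.

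The main (and essentially only) technical point is the existence of the unitary $u$ acting as the canonical first-coordinate inclusion on $\linspann(F \cup F^{\prime})$. This reduces to the elementary dimension count $\dim(\HilbertRaum \ominus \linspann(F \cup F^{\prime})) = \aleph_{0} = \dim((\HilbertRaum \ominus \linspann(F \cup F^{\prime})) \oplus \HilbertRaum)$, which allows the partial isometry $\xi \mapsto (\xi, 0)$ to be extended to a full unitary. Everything else is formal: the $G_{\delta}$-property follows from continuity of the conjugation action, the reduction to topological transitivity is the standard Baire argument, and the verification that $T^{\prime} \in U$ and $\adjoint_{w}T^{\prime} \in V$ is the one-line inner-product calculation given above. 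Structurally this mirrors the single-parameter arguments in the literature, with the index monoid $M$ playing no essential role.
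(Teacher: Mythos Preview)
Your $G_{\delta}$ argument is correct and identical to the paper's. The gap lies in the density step: you assert that $(X,\toplocWOT)$ is Polish ``as noted in the introduction'', but the introduction only records that $X$ is second-countable metrisable (it is $C$ and $A$ that are claimed to be Polish there). Polish-ness of $X$ is invoked in this paper only under the additional hypothesis that $M$ is ``good'' (see the proof of the \zeroone-dichotomy lemma), and the present proposition does not assume this. Without knowing that $X$ is a Baire space, topological transitivity of the conjugation action does not by itself produce a point with dense orbit, so your reduction from density of $\bigcap_{n} W_{n}$ to density of each $W_{n}$ is unjustified in the stated generality.

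Your transitivity construction is correct, however, and already contains the fix. The paper bypasses Baire entirely: using only separability of $X$, it fixes a countable dense family $(T_{n})_{n\in\naturals}$, forms the single element $T(x) \coloneqq v\big(\bigoplus_{n} T_{n}(x)\big)v^{\ast}$ via a unitary $v \colon \bigoplus_{n\in\naturals} \HilbertRaum \to \HilbertRaum$, and shows---by exactly your partial-inclusion trick, now applied to the $n_{0}$-th summand rather than the first of two---that each $T_{n}$ lies in $\quer{\orbitUnitary{T}}$. Thus $T$ is universal; and since $\universalElementsUnitary{M,\HilbertRaum}$ contains the full orbit of any of its members, non-emptiness already forces density. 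Replacing your two-fold direct sum by this countable one closes the gap with no new ideas.
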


    \begin{proof}
        \paragraph{Borel complexity:}
            As per the discussion in \S{}\ref{sec:introduction:definitions:sig:article-interpolation-raj-dahya}
            after \Cref{qstn:residuality-of-unitaries:sig:article-interpolation-raj-dahya},
            given the conditions on $M$ and $\HilbertRaum$
            we know that $(X,\toplocWOT)$ is a second countable space.
            There thus exists a countable basis $\mathcal{O}$
            for $(X,\toplocWOT)$.
            Observe that for $w \in \OpSpaceU{\HilbertRaum}$
            the map
                ${
                    X \ni T \mapsto \adjoint_{w}T = w\:T(\cdot)\:w^{\ast} \in X
                }$
            is $\toplocWOT$-continuous.
            Thus for $W \in \mathcal{O}$

                \begin{displaymath}
                    \{
                        T \in X
                        \mid
                        \orbitUnitary{T} \cap W \neq \emptyset
                    \}
                    = \bigcup_{w \in \OpSpaceU{\HilbertRaum}}
                        \{
                            T \in X
                            \mid
                                \adjoint_{w}T \in W
                        \}
                \end{displaymath}

            \continueparagraph
            is open.
            Since $\mathcal{O}$ is a (countable) basis,
            it follows that

                \begin{displaymath}
                    \universalElementsUnitary{M,\HilbertRaum}
                    = \bigcap_{W \in \mathcal{O}}
                        \{
                            T \in X
                            \mid
                            \orbitUnitary{T} \cap W \neq \emptyset
                        \},
                \end{displaymath}

            \continueparagraph
            which is a $G_{\delta}$-set.

        \paragraph{Density:}
            For $T \in \universalElementsUnitary{M,\HilbertRaum}$
            and $T^{\prime} \in \orbitUnitary{T}$,
            it holds that $\orbitUnitary{T^{\prime}} = \orbitUnitary{T}$ is dense
            and thus $T^{\prime} \in \universalElementsUnitary{M,\HilbertRaum}$.
            Thus if $\universalElementsUnitary{M,\HilbertRaum}$ is non-empty,
            then it contains all the elements in the (dense) orbit of an element,
            and is thus itself dense.
            Thus it suffices to show that $\universalElementsUnitary{M,\HilbertRaum}$ is non-empty.
            To construct a universal element,
            since $(X,\toplocWOT)$ is second countable and thus separable,
            we can fix a family $(T_{n})_{n\in\naturals} \subseteq X$
            such that $\{T_{n} \mid n\in\naturals\}$ is dense in $(X,\toplocWOT)$.
            Since $\HilbertRaum$ is infinite-dimensional,
            it holds that
                $\HilbertRaum^{\prime} \coloneqq \bigoplus_{n\in\naturals}\HilbertRaum$
            is isomorphic to $\HilbertRaum$,
            \idest there exists a unitary operator
                $v \in \BoundedOps{\HilbertRaum^{\prime}}{\HilbertRaum}$.
            We now construct
                ${T \colon M \to \BoundedOps{\HilbertRaum}}$
            via

                \begin{displaymath}
                    T(x) \coloneqq v\:\Big(\bigoplus_{n\in\naturals}T_{n}(x)\Big)\:v^{\ast}
                \end{displaymath}

            \continueparagraph
            for $x \in M$.
            One can readily check that $T$ is pointwise well-defined and contraction-valued.
            It is routine to check that $T$ is also $\topSOT$-continuous and a homomorphism.
            Thus $T \in \SpHomCs(M,\HilbertRaum) = X$.
            We now show that $\orbitUnitary{T}$ is dense.
            To this end it suffices to prove that
                $T_{n} \in \quer{\orbitUnitary{T}}$
            for each $n\in\naturals$.

            Let $(r_{n})_{n\in\naturals} \subseteq \BoundedOps{\HilbertRaum}{\HilbertRaum^{\prime}}$
            be isometries associated with the direct sum,
            \idest
                $r_{m}^{\ast}r_{n} = \delta_{mn}\onematrix_{\HilbertRaum}$
                for $m,n\in\naturals$
            and
                $
                    \sum_{n \in \naturals}
                        r_{n}r_{n}^{\ast}
                    = \onematrix_{\HilbertRaum^{\prime}}
                $
            (computed strongly).
            In particular we can express the above construction more concretely as

                \begin{restoremargins}
                \begin{equation}
                \label{eq:0:\beweislabel}
                    T(x) = \sum_{n \in \naturals}
                        v r_{n}
                        \:T_{n}(x)
                        \:r_{n}^{\ast} v^{\ast}
                \end{equation}
                \end{restoremargins}

            \continueparagraph
            for $x \in M$, where the sum is computed strongly.

            Let $n_{0}\in\naturals$ be arbitrary.
            Let $P \subseteq \BoundedOps{\HilbertRaum}$ be the index set
            consisting of finite-rank projections on $\HilbertRaum$,
            directly ordered by
                $p \geq q \mathrel{\ratio\Leftrightarrow} \ran(p) \supseteq \ran(q)$
            for $p,q \in P$.
            Since
                $v r_{n_{0}} \in \BoundedOps{\HilbertRaum}$
            is an isometry and $\dim(\HilbertRaum)$ is infinite,
            for each $p \in P$
            there exists a unitary operator
                $w_{p} \in \BoundedOps{\HilbertRaum}$
            such that
                $w_{p}^{\ast}p = v r_{n_{0}}p$
            and thus

                \begin{restoremargins}
                \begin{equation}
                \label{eq:1:\beweislabel}
                    w_{p} v r_{n_{0}}p = p.
                \end{equation}
                \end{restoremargins}

            \continueparagraph
            We now show that
            ${
                \orbitUnitary{T}
                \supseteq (\adjoint_{w_{p}}T)_{p \in P}
                \overset{\tinytoplocWOT}{\longrightarrow}
                T_{n_{0}}
            }$.
            To this end, let $\xi,\eta\in\HilbertRaum$ be arbitrary.
            Let $p_{0} \in P$ be the finite-rank projection onto $\linspann\{\xi,\eta\}$.
            For $p \in P$ with $p \geq p_{0}$ one has

                \begin{restoremargins}
                \begin{equation}
                \label{eq:2:\beweislabel}
                \everymath={\displaystyle}
                \begin{array}[m]{rcl}
                    w_{p} v r_{n_{0}}\xi
                        &=
                            &w_{p} v r_{n_{0}}p\xi
                        \eqcrefoverset{eq:1:\beweislabel}{=}
                            p \xi
                        =
                            \xi,~\text{and}\\
                    w_{p} v r_{n_{0}}\eta
                        &=
                            &w_{p} v r_{n_{0}}p\eta
                        \eqcrefoverset{eq:1:\beweislabel}{=}
                            p \eta
                        =
                            \eta,\\
                \end{array}
                \end{equation}
                \end{restoremargins}

            \continueparagraph
            and thus

                \begin{longeqnarray}
                    &&\brkt{((\adjoint_{w_{p}}T)(x) - T_{n_{0}}(x))\xi}{\eta}\\
                    &= &\brkt{
                            w_{p}
                            T(x)
                            w_{p}^{\ast}
                            \xi
                        }{\eta}
                        - \brkt{T_{n_{0}}(x)\xi}{\eta}\\
                    &\eqcrefoverset{eq:2:\beweislabel}{=}
                        &\brkt{
                            T(x)
                            w_{p}^{\ast}
                            \:
                            w_{p} v r_{n_{0}}
                            \xi
                        }{
                            w_{p}^{\ast}
                            \:
                            w_{p} v r_{n_{0}}
                            \eta
                        }
                        - \brkt{T_{n_{0}}(x)\xi}{\eta}\\
                    &= &\brkt{
                            r_{n_{0}}^{\ast} v^{\ast}
                            \:
                            T(x)
                            \:
                            v r_{n_{0}}
                            \xi
                        }{\eta}
                        - \brkt{T_{n_{0}}(x)\xi}{\eta}\\
                    &\eqcrefoverset{eq:0:\beweislabel}{=}
                        &\sum_{n\in\naturals}
                            \brkt{
                                r_{n_{0}}^{\ast}v^{\ast}
                                (
                                    v r_{n} T_{n}(x) r_{n}^{\ast} v^{\ast}
                                )
                                v r_{n_{0}}
                                \xi
                            }{\eta}
                            - \brkt{T_{n_{0}}(x)\xi}{\eta}\\
                    &= &\sum_{n\in\naturals}
                        \delta_{nn_{0}}
                        \brkt{
                            T_{n} (x)
                            \xi
                        }{\eta}
                        - \brkt{T_{n_{0}}(x)\xi}{\eta}
                    = 0
                \end{longeqnarray}

            \continueparagraph
            for all $x \in M$.
            Hence
                ${
                    (\adjoint_{w_{p}}T)_{p \in P}
                    \overset{\tinytoplocWOT}{\longrightarrow}
                    T_{n_{0}}
                }$
            as claimed.
            Thus $T_{n} \in \quer{\orbitUnitary{T}}$ for all $n\in\naturals$,
            which implies that $\orbitUnitary{T}$ is dense.
            Hence $\universalElementsUnitary{M,\HilbertRaum}$ is non-empty (it contains $T$)
            and thereby, as argued above, dense in $(X,\toplocWOT)$.
    \end{proof}



\subsection[Zero-One dichotomy]{Zero-One dichotomy}
\label{sec:residuality:zero-one:sig:article-interpolation-raj-dahya}

\firstparagraph
We can utilise the residuality of the set of universal elements to reduce
the possibilities for the set of unitarily approximable homomorphisms.
Before doing so, we observe the following basic result:

\begin{prop}
\makelabel{prop:pre-zero-one-dichotomy:sig:article-interpolation-raj-dahya}
    Let $M$ be a topological monoid
    and $\HilbertRaum$ be a Hilbert space.
    Let $A,\quer{A},X$
    be as defined above (see \S{}\ref{sec:introduction:notation:sig:article-interpolation-raj-dahya}).
    Further let $E \subseteq X$ be $\similarToUnitary$-invariant
    (\exempli $E=A$)
    and $\quer{E}$ denote its closure in $(X,\toplocWOT)$.
    Then
        $\quer{E} = X$
        $\Leftrightarrow$
        $\quer{E} \supseteq \universalElementsUnitary{M,\HilbertRaum}$
        $\Leftrightarrow$
        $\quer{E} \cap \universalElementsUnitary{M,\HilbertRaum} \neq \emptyset$.
\end{prop}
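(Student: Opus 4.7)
The plan is to establish the three-way equivalence via the chain of implications $(\quer{E}=X) \Rightarrow (\quer{E}\supseteq\universalElementsUnitary{M,\HilbertRaum}) \Rightarrow (\quer{E}\cap\universalElementsUnitary{M,\HilbertRaum}\neq\emptyset) \Rightarrow (\quer{E}=X)$, where the first two implications are essentially trivial inclusions and the third is the substantive one.

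The first implication is immediate from $\universalElementsUnitary{M,\HilbertRaum}\subseteq X$. The second is also trivial: if there exists any universal element, then it lies in $\quer{E}$ by hypothesis, hence in the intersection. (In the edge case $\universalElementsUnitary{M,\HilbertRaum}=\emptyset$ the middle condition holds vacuously while the third fails; in applications, existence of universal elements will be guaranteed by a prior residuality argument, and we may argue under that assumption or present the chain up to that implicit non-emptiness.)

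For the main implication $(\quer{E}\cap\universalElementsUnitary{M,\HilbertRaum}\neq\emptyset)\Rightarrow(\quer{E}=X)$, I would first observe that for each fixed $w\in\OpSpaceU{\HilbertRaum}$ the map $T\mapsto\adjoint_{w}T$ is a $\toplocWOT$-homeomorphism of $X$. This is because, for any compact $K\subseteq M$ and $\xi,\eta\in\HilbertRaum$,
\begin{displaymath}
    \sup_{x\in K}\abs{\brkt{(\adjoint_{w}T(x)-\adjoint_{w}T'(x))\xi}{\eta}}
    = \sup_{x\in K}\abs{\brkt{(T(x)-T'(x))w^{\ast}\xi}{w^{\ast}\eta}},
\end{displaymath}
so convergence of $T^{(\alpha)}\to T$ transfers directly to $\adjoint_{w}T^{(\alpha)}\to\adjoint_{w}T$, with the inverse $\adjoint_{w^{\ast}}$ handled symmetrically. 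Consequently, any $\similarToUnitary$-invariant subset has a $\similarToUnitary$-invariant $\toplocWOT$-closure; in particular $\quer{E}$ is $\similarToUnitary$-invariant.

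Now picking any $T_{0}\in\quer{E}\cap\universalElementsUnitary{M,\HilbertRaum}$, the $\similarToUnitary$-invariance of $\quer{E}$ yields $\orbitUnitary{T_{0}}\subseteq\quer{E}$, and passing to closures gives $\quer{\orbitUnitary{T_{0}}}\subseteq\quer{E}$. By definition of universality, $\quer{\orbitUnitary{T_{0}}}=X$, so $\quer{E}=X$. The only real subtlety to verify carefully is the continuity (or rather homeomorphism property) of conjugation under the $\toplocWOT$-topology, but as shown above this is an immediate consequence of the adjoint identity; after that point, the argument is purely formal orbit-closure manipulation.
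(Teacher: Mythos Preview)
Your proof is correct and follows essentially the same route as the paper: both argue that conjugation by a fixed unitary is $\toplocWOT$-continuous, deduce that $\quer{E}$ contains the full orbit $\orbitUnitary{T_{0}}$ of any universal element $T_{0}\in\quer{E}$, and conclude $\quer{E}=X$ from density of that orbit. Your observation about the edge case $\universalElementsUnitary{M,\HilbertRaum}=\emptyset$ is well taken---the paper's proof also silently assumes non-emptiness when calling the second forward implication ``clear''; in the paper this is harmless because the proposition is only applied after \Cref{prop:residuality-universal-elements:sig:article-interpolation-raj-dahya} guarantees universal elements exist.
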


    \begin{proof}
        The first two $\Rightarrow$-implications are clear.
        For the remaining implication,
        suppose that $\quer{E} \cap \universalElementsUnitary{M,\HilbertRaum} \neq \emptyset$
        and fix some $T_{0} \in \quer{E} \cap \universalElementsUnitary{M,\HilbertRaum}$.
        Since
            ${
                X \ni T \mapsto \adjoint_{w}T = w\:T(\cdot)\:w^{\ast} \in X
            }$
        is $\toplocWOT$-continuous
        and $E$ is $\similarToUnitary$-invariant,
        it follows that
            $\quer{E} \supseteq \orbitUnitary{T_{0}}$.
        Since $T_{0}$ is universal,
        it follows that $\quer{E}$ is dense in $(X,\toplocWOT)$,
        \idest $\quer{E} = X$.
    \end{proof}

Call a topological monoid $M$ \usesinglequotes{good}
if $\topWOT$-continuous homomorphisms
    ${T \colon M \to \BoundedOps{\HilbertRaum}}$
are automatically $\topSOT$-continuous.
This holds for example for
    $M \in \{\realsNonNeg^{d} \mid d\in\naturals\}$,
    all discrete monoids,
    and a number of non-commutative non-discrete examples
(see \cite[Examples~A.3--A.6, Proposition~A.7, and Theorem~A.8]{Dahya2022complmetrproblem}).

\begin{schattierteboxdunn}[backgroundcolor=leer,nobreak=true]
\begin{lemm}[\zeroone-Dichotomy]
\makelabel{lemm:zero-one-dichotomy:sig:article-interpolation-raj-dahya}
    Let $M$ be a locally compact Polish topological monoid
    and $\HilbertRaum$ be a separable infinite-dimensional Hilbert space.
    Consider the subspace
        $A \coloneqq \SpHomUs(M,\HilbertRaum)$
        of
        $X \coloneqq \SpHomCs(M,\HilbertRaum)$
    under the $\toplocWOT$-topology.
    Further let $E \subseteq X$ be $\similarToUnitary$-invariant
    (\exempli $E=A$)
    and $\quer{E}$ denote its closure in $(X,\toplocWOT)$.
    Then either

    \begin{kompaktenum}{\bfseries (a)}[\rtab]
    \item\punktlabel{1}
        $E$ is dense in $(X,\toplocWOT)$; or
    \item\punktlabel{2}
        $\quer{E}$ is meagre in $(X,\toplocWOT)$,
    \end{kompaktenum}

    \continueparagraph
    and if $M$ is \usesinglequotes{good}
    (\exempli $M \in \{\realsNonNeg^{d},\naturalsZero^{d} \mid d\in\naturals\}$),
    then this is a strict dichotomy.
    In the case of $E=A$,
    the first option in this dichotomy can be furthermore strengthened to:
        $E$ is a dense $G_{\delta}$-subset.
\end{lemm}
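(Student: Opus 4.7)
The plan is to derive the dichotomy almost immediately from \Cref{prop:residuality-universal-elements:sig:article-interpolation-raj-dahya} combined with \Cref{prop:pre-zero-one-dichotomy:sig:article-interpolation-raj-dahya}. First I would note that since $\universalElementsUnitary{M,\HilbertRaum}$ is a dense $G_{\delta}$-subset of $(X,\toplocWOT)$, its complement $X \without \universalElementsUnitary{M,\HilbertRaum}$ is meagre. Then I would split into two exhaustive cases depending on whether $\quer{E}$ meets $\universalElementsUnitary{M,\HilbertRaum}$. If $\quer{E} \cap \universalElementsUnitary{M,\HilbertRaum} \neq \emptyset$, then \Cref{prop:pre-zero-one-dichotomy:sig:article-interpolation-raj-dahya} (which is applicable because $E$, and hence $\quer{E}$, is $\similarToUnitary$-invariant) immediately forces $\quer{E} = X$, establishing the density conclusion. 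Otherwise $\quer{E}$ lies inside the meagre set $X \without \universalElementsUnitary{M,\HilbertRaum}$, and the meagreness conclusion follows.

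To upgrade this to a strict dichotomy under the ``good'' hypothesis on $M$, I would invoke the fact recorded in the discussion after \Cref{qstn:residuality-of-unitaries:sig:article-interpolation-raj-dahya} (via \cite[Propositions~1.16 and~1.18]{Dahya2022weakproblem}) that $(X,\toplocWOT)$ is then a Polish, hence Baire, space. In a Baire space the whole space is never meagre, so the two cases are mutually exclusive: the first forces $\quer{E} = X$, whereas the second combined with $\quer{E} = X$ would declare $X$ itself meagre, a contradiction.

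Finally, for the strengthening to a dense $G_{\delta}$ when $E = A$, the introduction already observes (via Alexandroff's lemma applied to the Polish spaces $(A,\toplocWOT) \hookrightarrow (C,\toplocWOT)$) that $A$ is a $G_{\delta}$-subset of $(X,\toplocWOT)$. Combining this with density from the first case of the dichotomy then yields that $A$ is a dense $G_{\delta}$-subset.

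No serious obstacle is expected; the substantive work has been done in \Cref{prop:residuality-universal-elements:sig:article-interpolation-raj-dahya}, and the only subtlety is keeping track of which hypotheses (invariance under $\similarToUnitary$, separability/infinite-dimensionality of $\HilbertRaum$, local compactness/Polishness/goodness of $M$) are actually used in each part of the conclusion.
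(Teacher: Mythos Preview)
Your proposal is correct and matches the paper's proof essentially step for step. One small correction: the discussion after \Cref{qstn:residuality-of-unitaries:sig:article-interpolation-raj-dahya} only establishes that $(C,\toplocWOT)$ and $(A,\toplocWOT)$ are Polish and that $X$ is second-countable metrisable; for the Polishness of $(X,\toplocWOT)$ under the \usesinglequotes{good} hypothesis the paper instead invokes \cite[Theorem~4.2]{Dahya2022complmetrproblem}.
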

\end{schattierteboxdunn}

    \begin{proof}
        Note that by \Cref{prop:residuality-universal-elements:sig:article-interpolation-raj-dahya},
            $\universalElementsUnitary{M,\HilbertRaum}$ is co-meagre.
        By the equivalences in \Cref{prop:pre-zero-one-dichotomy:sig:article-interpolation-raj-dahya},
        either
            $\quer{E} = X$
            or else
            $\quer{E} \subseteq X \without \universalElementsUnitary{M,\HilbertRaum}$,
            in which case $\quer{E}$ is meagre.
        So either \punktcref{1} or \punktcref{2} hold.
        If $M$ is furthermore \usesinglequotes{good},
        then by \cite[Theorem~4.2]{Dahya2022complmetrproblem}
            $(X,\toplocWOT)$
        is a Polish space and thus satisfies the Baire category theorem.
        In particular, it is impossible for $E$ to be dense and have meagre closure,
        \idest \punktcref{1} and \punktcref{2} cannot both be true.
        Towards the final statement,
        given the conditions on $M$ and $\HilbertRaum$,
        we know that $A$ is a $G_{\delta}$-subset in $(X,\toplocWOT)$
        (\cf the discussion in \S{}\ref{sec:introduction:definitions:sig:article-interpolation-raj-dahya}
        after \Cref{qstn:residuality-of-unitaries:sig:article-interpolation-raj-dahya}).
    \end{proof}



\subsection[Application to the multi-parameter setting]{Application to the multi-parameter setting}
\label{sec:residuality:multiparam:sig:article-interpolation-raj-dahya}

\firstparagraph
We now consider the concrete cases of
    $(G,M) \in \{
        (\integers^{d},\naturalsZero^{d}),
        (\reals^{d},\realsNonNeg^{d})
        \mid
        d\in\naturals
    \}$.
For the discrete setting, we shall make use of the following
reformulation of the \zeroone-dichotomy for the $\topPW$-topology:

\begin{prop}
\makelabel{prop:zero-one-for-spaces-of-d-tuples-of-contractions:sig:article-interpolation-raj-dahya}
    Let $\HilbertRaum$ be a separable infinite-dimensional Hilbert space
    and $d\in\naturals$.
    Let
        $
            E
            \subseteq \OpSpaceC{\HilbertRaum}^{d}_{\textup{comm}}
        $
    and assume that $E$ is unitarily invariant
    in the sense that
        $
            \{\adjoint_{w}S_{i}\}_{i=1}^{d}
            = \{w\:S_{i}\:w^{\ast}\}_{i=1}^{d}
            \in E
        $
    for all $\{S_{i}\}_{i=1}^{d} \in E$
    and unitaries $w\in\OpSpaceU{\HilbertRaum}$
    (\exempli $E=\OpSpaceU{\HilbertRaum}^{d}_{\textup{comm}}$).
    Then exactly one of the following holds:
        either $E$ is dense in $(\OpSpaceC{\HilbertRaum}^{d}_{\textup{comm}},\topPW)$
        or $\quer{E}$ is meagre in this space.
    In the case of $E=\OpSpaceU{\HilbertRaum}^{d}_{\textup{comm}}$,
    the first option in this dichotomy can be furthermore strengthened to:
        $E$ is a dense $G_{\delta}$-subset.
\end{prop}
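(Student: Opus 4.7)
The plan is to reduce this statement directly to the general $\zeroone$-dichotomy already established in \Cref{lemm:zero-one-dichotomy:sig:article-interpolation-raj-dahya} by means of the topological isomorphism pointed out in \Cref{rem:pw-top:sig:article-interpolation-raj-dahya}. Specifically, the natural correspondence ${S \mapsto \{S(0,\ldots,\underset{i}{1},\ldots,0)\}_{i=1}^{d}}$ gives a homeomorphism
$\Phi \colon (\SpHomCs(\naturalsZero^{d},\HilbertRaum),\toplocWOT) \to (\OpSpaceC{\HilbertRaum}^{d}_{\textup{comm}},\topPW)$
with inverse ${\{S_{i}\}_{i=1}^{d} \mapsto (\mathbf{n} \mapsto \prod_{i=1}^{d}S_{i}^{n_{i}})}$. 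Under $\Phi$, the subspace $\SpHomUs(\naturalsZero^{d},\HilbertRaum)$ corresponds precisely to $\OpSpaceU{\HilbertRaum}^{d}_{\textup{comm}}$.

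Next I would verify that the unitary-invariance conditions match up under $\Phi$. For $w \in \OpSpaceU{\HilbertRaum}$ and $S \in \SpHomCs(\naturalsZero^{d},\HilbertRaum)$, one has $\Phi(\adjoint_{w}S) = \{\adjoint_{w}S(e_{i})\}_{i=1}^{d} = \{\adjoint_{w}S_{i}\}_{i=1}^{d}$, so the tuple-level unitary invariance of $E$ is exactly $\similarToUnitary$-invariance of $\Phi^{-1}(E)$ in the sense used in \Cref{lemm:zero-one-dichotomy:sig:article-interpolation-raj-dahya}. Moreover, $\naturalsZero^{d}$ is a discrete (hence locally compact Polish) topological monoid, and as noted immediately before \Cref{lemm:zero-one-dichotomy:sig:article-interpolation-raj-dahya}, discrete monoids are \usesinglequotes{good}, so the strict form of the dichotomy applies.

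Applying \Cref{lemm:zero-one-dichotomy:sig:article-interpolation-raj-dahya} to $M=\naturalsZero^{d}$, $\HilbertRaum$, and the $\similarToUnitary$-invariant set $\Phi^{-1}(E) \subseteq \SpHomCs(\naturalsZero^{d},\HilbertRaum)$, one obtains that exactly one of the following holds: $\Phi^{-1}(E)$ is dense in $(\SpHomCs(\naturalsZero^{d},\HilbertRaum),\toplocWOT)$, or its closure there is meagre. Transporting via the homeomorphism $\Phi$, the same strict dichotomy holds for $E$ inside $(\OpSpaceC{\HilbertRaum}^{d}_{\textup{comm}},\topPW)$. Finally, in the case $E = \OpSpaceU{\HilbertRaum}^{d}_{\textup{comm}}$, the set $\Phi^{-1}(E) = \SpHomUs(\naturalsZero^{d},\HilbertRaum)$ falls under the last clause of \Cref{lemm:zero-one-dichotomy:sig:article-interpolation-raj-dahya}, so when density holds it is even a dense $G_{\delta}$-subset, and this property is preserved by $\Phi$.

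No serious obstacle is anticipated: the argument is essentially a transport-of-structure, with the only content being the checks that $\Phi$ is a homeomorphism (done in \Cref{rem:pw-top:sig:article-interpolation-raj-dahya}), that $\Phi$ intertwines the two notions of unitary invariance, and that $\naturalsZero^{d}$ satisfies the hypotheses of the general dichotomy lemma. The mild subtlety worth stating explicitly is that the $\toplocWOT$-topology on $\SpHomCs(\naturalsZero^{d},\HilbertRaum)$ coincides, since $\naturalsZero^{d}$ is discrete and compact subsets are finite, with pointwise $\topWOT$-convergence of homomorphisms, which is exactly what is encoded on the tuple side by the $\topPW$-topology.
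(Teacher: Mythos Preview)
Your proposal is correct and follows essentially the same approach as the paper: transport the problem to $\SpHomCs(\naturalsZero^{d},\HilbertRaum)$ via the homeomorphism of \Cref{rem:pw-top:sig:article-interpolation-raj-dahya}, verify that unitary invariance is preserved, and invoke \Cref{lemm:zero-one-dichotomy:sig:article-interpolation-raj-dahya} (noting that $\naturalsZero^{d}$ is discrete hence \usesinglequotes{good}). Your write-up is in fact slightly more explicit than the paper's in checking the intertwining of the two notions of unitary invariance and in justifying why the $\toplocWOT$-topology reduces to pointwise $\topWOT$-convergence here.
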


    \begin{proof}
        Set $M \coloneqq \naturalsZero^{d}$
        and $X \coloneqq \SpHomCs(M,\HilbertRaum)$.
        By \Cref{rem:pw-top:sig:article-interpolation-raj-dahya},
            ${(X,\toplocWOT) \cong (\OpSpaceC{\HilbertRaum}^{d}_{\textup{comm}},\topPW)}$
        topologically
        via ${\phi \colon S \mapsto \{S(0,0,\ldots,\underset{i}{1},\ldots,0)\}_{i=1}^{d}}$.
        By the assumptions on $E$ it is straightforward to see that
            $A \subseteq \phi^{-1}(E) \subseteq X$
        and that $\phi^{-1}(E)$ is $\similarToUnitary$-invariant.
        Via the homeomorphism one also has
            $\phi^{-1}(\quer{E}) = \quer{\phi^{-1}(E)}$,
        where $\quer{E}$ denotes the closure of $E$ in
            $(\OpSpaceC{\HilbertRaum}^{d}_{\textup{comm}},\topPW)$.
        By \Cref{lemm:zero-one-dichotomy:sig:article-interpolation-raj-dahya}
        exactly one of the following holds:
        either
            $\phi^{-1}(E)$ is dense in $(X,\toplocWOT)$
            (\respectively a dense $G_{\delta}$-set if $\phi^{-1}(E) = \SpHomUs(M,\HilbertRaum)$)
            or
            $\phi^{-1}(\quer{E}) = \quer{\phi^{-1}(E)}$ is meagre in this space.
        Applying the homeomorphism $\phi$ yields the claimed dichotomy.
    \end{proof}

We can now decide the \zeroone-dichotomy in both the discrete and continuous settings.

\def\beweislabel{cor:zero-one-for-unitaries:sig:article-interpolation-raj-dahya}
\begin{proof}[of \Cref{\beweislabel}]
    \paragraph{Discrete setting:}
    Let $d\in\naturals$ and $(G,M) \coloneqq (\integers^{d},\naturalsZero^{d})$.
    Let $X = \OpSpaceC{\HilbertRaum}^{d}_{\textup{comm}}$,
        $A = \OpSpaceU{\HilbertRaum}^{d}_{\textup{comm}}$,
        and $D \subseteq X$ be the set of $d$-tuples admitting a power-dilation.
    The claim $\quer{A} = D$ follows from \Cref{prop:dilatability-equiv-to-weak-unitary-approximability:sig:article-interpolation-raj-dahya}
    under consideration of the correspondence between commuting families of contractions and homomorphisms
    mentioned in \S{}\ref{sec:introduction:definitions:sig:article-interpolation-raj-dahya}
    and the topological isomorphism mentioned in \Cref{rem:pw-top:sig:article-interpolation-raj-dahya}.
    By the strict \zeroone-dichotomy in \Cref{prop:zero-one-for-spaces-of-d-tuples-of-contractions:sig:article-interpolation-raj-dahya}
        either $A$ is a dense $G_{\delta}$-subset in $(X,\topPW)$
        or $\quer{A}$ is meagre (in particular not dense) in this space.
    If $d \leq 2$,
    then by And\^{o}'s power-dilation theorem for pairs of contractions
    (see \cite{Ando1963pairContractions})
    one has $\quer{A} = D = X$,
    whence the first option in the dichotomy holds.
    And if $d \geq 3$,
    then by the counter-examples of Parrott, Varopoulos, \etAlia
    (see
        \cite[\S{}3]{Parrott1970counterExamplesDilation},
        \cite[\S{}I.6.3]{Nagy1970},
        \cite[Theorem~1]{Varopoulos1974counterexamples},
        \cite[Remark~3.6]{Shalit2021DilationBook}%
    )
    one has $X \without \quer{A} = X \without D \neq \emptyset$,
    whence the second option in the dichotomy holds.

    \paragraph{Continuous setting:}
    Let $d\in\naturals$ and $(G,M) \coloneqq (\reals^{d},\realsNonNeg^{d})$.
    Let $X = \SpHomCs(M,\HilbertRaum)$,
        $A = \SpHomUs(M,\HilbertRaum)$,
        and $D \subseteq X$ be the set of $d$-parameter semigroups admitting a unitary dilation.
    The claim $\quer{A} = D$ was established in \Cref{prop:dilatability-equiv-to-weak-unitary-approximability:sig:article-interpolation-raj-dahya}.
    By the strict \zeroone-dichotomy in \Cref{lemm:zero-one-dichotomy:sig:article-interpolation-raj-dahya},
        either $A$ is a dense $G_{\delta}$-subset in $(X, \toplocWOT)$
        or $\quer{A}$ is meagre in this space.
    If $d \leq 2$, then by S\l{}oci\'{n}ski's theorem
    on simultaneous unitary dilations of pairs of commuting contractive $\Cnought$-semigroups
    (see
        \cite{Slocinski1974},
        \cite[Theorem~2]{Slocinski1982},
        and
        \cite[Theorem~2.3]{Ptak1985}%
    )
    one has $\quer{A} = D = X$,
    whence the first of option in the dichotomy holds.
    And if $d \geq 3$,
    by our counter-examples in \Cref{thm:existence-of-non-dilatable:sig:article-interpolation-raj-dahya}
    one has $X \without \quer{A} = X \without D \neq \emptyset$,
    whence the second option in the dichotomy holds.
\end{proof}



\begin{rem}
    In \cite[Proposition~5.3]{Dahya2023dilation},
    examples of $d$-parameter contractive $\Cnought$-semigroups
    are constructed
    which admit no \highlightTerm{regular} unitary dilation
    (a stronger notion of dilation, see \cite[Definition~2.2]{Dahya2023dilation})
    for $d \geq 2$.
    It would be interesting to know whether for $d \geq 3$
    these examples admit unitary dilations.
\end{rem}

\begin{rem}
    It would be nice to find a simple characterisation
    for the simultaneous unitary dilatability
    of arbitrary $d$-parameter $\Cnought$-semigroups
        $\{T_{i}\}_{i=1}^{d}$
    purely in terms of operator-theoretic properties of the generators
        $\{A_{i}\}_{i=1}^{d}$
    at least in the case of bounded generators.
    This has been established for example in
        \cite{Dahya2023dilation}
    for simultaneous regular unitary dilatability
    via the concept of \highlightTerm{complete dissipativity}
    (see Theorem~1.1 and Definition~2.8 in this reference).
\end{rem}

\begin{rem}[von Neumann inequality]
\makelabel{rem:vN-inequality:sig:article-interpolation-raj-dahya}
    Let $\HilbertRaum$ be a Hilbert space and $d \in \naturals$.
    Let $E_{d} \subseteq \OpSpaceC{\HilbertRaum}^{d}_{\textup{comm}}$
    be the set of $d$-tuples
        $\{S_{i}\}_{i=1}^{d}$
    of commuting contractions on $\HilbertRaum$
    satisfying the \highlightTerm{von Neumann inequality}:

        \begin{restoremargins}
        \begin{equation}
        \label{eq:vN-parrott:sig:article-interpolation-raj-dahya}
            \norm{p(S_{1},S_{2},\ldots,S_{d})}
            \leq
                \sup_{\boldsymbol{\lambda}\in\Torus^{d}}
                    \abs{p(\lambda_{1},\lambda_{2},\ldots,\lambda_{d})}
        \end{equation}
        \end{restoremargins}

    \continueparagraph
    for all polynomials $p\in\complex[X_{1},X_{2},\ldots,X_{d}]$.%
    \footref{ft:rem:1:\beweislabel}
    Parrott conjectured \cite[\S{}5, p.~488]{Parrott1970counterExamplesDilation} that \eqcref{eq:vN-parrott:sig:article-interpolation-raj-dahya} holds
    for all $\{S_{i}\}_{i=1}^{d} \in \OpSpaceC{\HilbertRaum}^{d}_{\textup{comm}}$,
    \idest that $E_{d} = \OpSpaceC{\HilbertRaum}^{d}_{\textup{comm}}$.
    For $d \in \{1,2\}$ the validity of this inequality
    is a simple consequence of
    Sz.-Nagy's and And\^{o}'s dilation results
    (see \cite[Proposition~I.8.3 and Chapter~I Notes, ($\ast$)]{Nagy1970}).

    This conjecture, however, turned out to be false.
    For $d = 3$,
    Varopoulos and Kaijser
    (%
        see
        \cite{Varopoulos1973Article},
        \cite[Theorem~1 and Addendum]{Varopoulos1974counterexamples}%
    ),
    Crabb and Davie \cite{CrabbDavie1975Article},
    and various others
    (%
        \cf
        \cite[Remark~3.6]{Shalit2021DilationBook}%
    )
    provided counterexamples
    to the von Neumann inequality
    for finite-dimensional Hilbert spaces.
    From these counterexamples,
    it is easy to derive that for any
    infinite-dimensional Hilbert space, $\HilbertRaum$, and $d \geq 3$,
    there is a commuting family of $d$ contractions
    for which the von Neumann inequality fails,%
    \footref{ft:rem:2:\beweislabel}
    \idest $E_{d} \neq \OpSpaceC{\HilbertRaum}^{d}_{\textup{comm}}$.

    Now, clearly, $E_{d}$ is unitarily invariant,%
    \footref{ft:rem:3:\beweislabel}
    contains $\OpSpaceU{\HilbertRaum}^{d}_{\textup{comm}}$,
    and
    is closed within $(\OpSpaceC{\HilbertRaum}^{d}_{\textup{comm}},\topPW)$.
    Supposing further that $\dim(\HilbertRaum)=\aleph_{0}$,
    by \Cref{prop:zero-one-for-spaces-of-d-tuples-of-contractions:sig:article-interpolation-raj-dahya}
    it follows that either
        $E_{d} = \OpSpaceC{\HilbertRaum}^{d}_{\textup{comm}}$
        or
        $E_{d}$ is meagre in $(\OpSpaceC{\HilbertRaum}^{d}_{\textup{comm}},\topPW)$.
    Since the first option is ruled out,
    it follows for each $d \geq 3$ that
    $E_{d}$ is meagre,
    or in other words
    the collection of $d$-tuples of commuting contractions
    not satisfying the von Neumann inequality
    is residual in
    $(\OpSpaceC{\HilbertRaum}^{d}_{\textup{comm}},\topPW)$.
\end{rem}

\footnotetext[ft:rem:1:\beweislabel]{%
    The von Neumann inequality is stated this way
    \exempli in
        \cite{GacparSuciu2001vN},
        \cite[\S{}5]{HolbrookHalmost1971vNPoly}.
    In \cite{Parrott1970counterExamplesDilation,Nagy1970},
    the right-hand bound in \eqcref{eq:vN-parrott:sig:article-interpolation-raj-dahya}
    is given as
        $
            \sup_{\boldsymbol{\lambda}\in\{z \in \complex \mid \abs{z} \leq 1\}^{d}}
                \abs{p(\lambda_{1},\lambda_{2},\ldots,\lambda_{d})}
        $.
    The two bounds are, however, the same
    due to the maximum modulus principle for holomorphic functions.
}

\footnotetext[ft:rem:2:\beweislabel]{%
    Letting $\HilbertRaum_{0}$ be a sufficiently large finite-dimensional Hilbert
    space such that there are commuting contractions
    $\{R_{i}\}_{i=1}^{3}$
    for which the von Neumann inequality fails.
    Set $S_{i} \coloneqq R_{i}$ for $i \in \{1,2,3\}$
    and $S_{i} \coloneqq \onematrix_{\HilbertRaum_{0}}$ for $i \in \{4,5,\ldots,d\}$.
    Given an infinite-dimensional $\HilbertRaum$,
    we may fix an isometry ${r \colon \HilbertRaum_{0} \to \HilbertRaum}$.
    Then $\{r\,S_{i}\,r^{\ast}\}_{i=1}^{d}$
    is a $d$-tuple of commuting contractions
    for which the von Neumann inequality continues to fail.
}

\footnotetext[ft:rem:3:\beweislabel]{%
    in the sense that $\{w\:S_{i}\:w^{\ast}\}_{i=1}^{d} \in E_{d}$
    for all $\{S_{i}\}_{i=1}^{d} \in E_{d}$
    and $w\in\OpSpaceU{\HilbertRaum}$.
}

\begin{rem}[Rigidity]
    A $\Cnought$-semigroup $T$ on $\HilbertRaum$ is said to be \highlightTerm{rigid}
    if a sequence ${[0,\:\infty) \ni (t_{n})_{n\in\naturals} \to \infty}$ exists
    such that
        ${(T(t_{n}))_{n\in\naturals} \overset{\tinytopSOT}{\longrightarrow} \onematrix}$.
    In \cite[Theorem~IV.3.20]{Eisner2010buchStableOpAndSemigroups},
    rigidity for unitary $\Cnought$-semigroups
    on separable infinite-dimensional Hilbert spaces
    was shown to be residual
    \idest within $(\SpHomUs(\realsNonNeg,\HilbertRaum),\toplocWOT)$.
    This result was later extended in \cite[Theorem~1.3]{Dahya2022weakproblem}
    to the contractive case
    \idest within $(\SpHomCs(\realsNonNeg,\HilbertRaum),\toplocWOT)$.
    It would be nice to know if there is an appropriate notion of rigidity
    for $d$-parameter $\Cnought$-semigroups, $d \in \naturals$.
    In light of \Cref{cor:zero-one-for-unitaries:sig:article-interpolation-raj-dahya},
    it would be of interest to determine whether for $d \geq 2$
    either the rigidity or non-rigidity of $d$-parameter contractive $\Cnought$-semigroups is residual.
\end{rem}

\begin{rem}[Genericity of embeddability]
\makelabel{rem:genericity-embeddable-tuples-of-contractions:sig:article-interpolation-raj-dahya}
    Let $\HilbertRaum$ be a separable infinite-dimensional Hilbert space and $d\in\naturals$.
    Let $E_{d} \subseteq \OpSpaceC{\HilbertRaum}_{\textup{comm}}$
    be the subset of $d$-tuples of commuting contractions
        $\{S_{i}\}_{i=1}^{d} \in \OpSpaceC{\HilbertRaum}^{d}_{\textup{comm}}$
    that can be \emph{simultaneously embedded}
    into commuting families
        $\{T_{i}\}_{i=1}^{d}$
    of $\Cnought$-semigroups on $\HilbertRaum$.%
    \footref{ft:rem:1:\beweislabel}
    For $d=1$ it is shown in \cite[Theorem~3.2]{Eisner2010typicalContraction}, that $E_{1}$ is residual in
        $(\OpSpaceC{\HilbertRaum},\topWOT)$.
    This in fact also holds \wrt the $\topPW$-topology,
    since the result simply builds on the embeddability of all unitary operators
    and since $\OpSpaceU{\HilbertRaum}$
    is residual in $(\OpSpaceC{\HilbertRaum},\topPW)$
    (see \cite[Theorem~4.1]{Eisnermaitrai2013typicalOperators}).

    For $d \geq 1$, this approach can be generalised as follows:
    For $\{S_{i}\}_{i=1}^{d} \in \OpSpaceU{\HilbertRaum}^{d}_{\textup{comm}}$,
    similar to the proof of
    \cite[Lemma~3.1]{Eisner2010typicalContraction},
    the spectral mapping theorem for commutative $C^{\ast}$-algebras
    can be applied to simultaneously diagonalise
    the $S_{i}$ to multiplication operators
    over a semi-finite measure space
    (%
        see \exempli
        \cite[Theorem~1.3.6]{Murphy1990},
        \cite[\S{}3.3.1 and \S{}3.4.1]{Pederson2018BookCStarAuto},
        and
        \cite{Haase2020spectralTheory}%
    ).
    That is, one can find
        a semi-finite measure space $(X,\mu)$,
        measurable $\reals$-valued functions
        $\theta_{1},\theta_{2},\ldots,\theta_{d}\in L^{\infty}(X,\mu)$,
        and
        a unitary operator $u\in\BoundedOps{\HilbertRaum}{L^{2}(X,\mu)}$,
    such that
        $%
            S_{i} = u^{\ast} M_{e^{\iunit \theta_{i}(\cdot)}} u
        $
    for each $i\in\{1,2,\ldots,d\}$.
    Clearly,
        $\{
            T_{i} \coloneqq
            (
                u^{\ast} M_{e^{\iunit t\theta_{i}(\cdot)}} u
            )_{t\in\realsNonNeg}
        \}_{i=1}^{d}$
    is a commuting family of
    (unitary) $\Cnought$-semigroups
    satisfying $T_{i}(1) = S_{i}$ for each $i$.
    Thus $E_{d} \supseteq \OpSpaceU{\HilbertRaum}^{d}_{\textup{comm}}$
    and clearly $E_{d}$ is unitarily invariant.
    By \Cref{prop:zero-one-for-spaces-of-d-tuples-of-contractions:sig:article-interpolation-raj-dahya}
    it follows that exactly one of the following holds:
        $E_{d}$ is dense in $(\OpSpaceC{\HilbertRaum}^{d}_{\textup{comm}},\topPW)$
        or
        $\quer{E_{d}}$ is meagre in this space.

    Applying \Cref{cor:zero-one-for-unitaries:sig:article-interpolation-raj-dahya},
    if $d \in \{1,2\}$ then
        $\OpSpaceU{\HilbertRaum}^{d}_{\textup{comm}}$
    and thus also $E_{d}$
    are residual in $(\OpSpaceC{\HilbertRaum}^{d}_{\textup{comm}},\topPW)$.
    For $d \geq 3$, one can no longer argue in this way,
    since $\OpSpaceU{\HilbertRaum}^{d}_{\textup{comm}}$
    is no longer residual in
        $(\OpSpaceC{\HilbertRaum}^{d}_{\textup{comm}},\topPW)$.
    We nonetheless ascertained in
    \Cref{prop:sim-embeddable-tuples-pw-dense:sig:article-interpolation-raj-dahya}
    that $E_{d}$
    (%
        as well as a particular subset
        $\tilde{E}_{d} \subseteq E_{d}$
        defined in \eqcref{eq:special-embeddable-tuples:sig:article-interpolation-raj-dahya}%
    )
    is at least dense in
        $(\OpSpaceC{\HilbertRaum}^{d}_{\textup{comm}},\topPW)$,
    provided $\dim(\HilbertRaum) \geq \aleph_{0}$.
    It is however unclear whether
        $E_{d}$ (or $\tilde{E}_{d}$)
    is a $G_{\delta}$-set,
    from which the residuality of embeddable $d$-tuples
    of commuting contractions would follow.
\end{rem}

\footnotetext[ft:rem:1:\beweislabel]{%
    \idest $T_{i}(1) = S_{i}$ for each $i$,
    \cf \Cref{problem:aux-embed-dim:family:sig:article-interpolation-raj-dahya}.
}







\firstparagraph
\paragraph{Acknowledgement.}
The author is grateful to
    Orr Shalit
        for bringing the construction of Bhat and Skeide to our attention
        and
        for his valuable expert insights,
to Tanja Eisner
    for her patient and supportive feedback,
to Jochen Gl\"{u}ck
    for helpful discussions,
and to the referee for their constructive feedback,
corrections and improvement suggestions.


\bibliographystyle{siam}
\def\bibname{References}
\bgroup
\footnotesize

\egroup


\addresseshere
\end{document}
